\newtheorem{Thm}{Theorem}
\newtheorem{prop}[Thm]{Proposition}
\def\x{{\mathbf{x}}}
\def\u{{\mathbf{u}}}
\def\v{{\mathbf{v}}}
\def\z{{\mathbf{z}}}
\def\w{{\mathbf{w}}}
\def\y{{\mathbf{y}}}
\def\b{{\mathbf{b}}}
\def\X{{\mathbf{X}}}
\def\A{{\mathbf{A}}}
\def\M{{\mathbf{M}}}
\def\I{{\mathbf{I}}}
\def\B{{\mathbf{B}}}
\def\W{{\mathbf{W}}}
\def\bmu{{\mathbf{\mu}}}
\def\mD{{\mathcal D}}
\def\reals{{\mathcal R}}
\newcommand{\ignore}[1]{}
\def\reals{{\mathbb R}}
\def\mA{{\mathcal A}}
\def\bold0{\mathbf{0}}
\newcommand\E{\mbox{\bf E}}
\def\w{\mathbf{w}}
\def\x{\mathbf{x}}
\newtheorem{theorem}{Theorem}[section]
\newtheorem{definition}{Definition}[section]
\newtheorem{lemma}{Lemma}[section]
\title{Fast and Simple PCA via Convex Optimization}
\author{Dan Garber\\  
\small{Technion - Israel Institute of Technology} \\
\small{dangar@tx.technion.ac.il} 
\and Elad Hazan \\
\small{Princeton University} \\ 
\small{ehazan@cs.princeton.edu}}
\date{}
\begin{document} 
\maketitle

\begin{abstract}
The problem of principle component analysis (PCA) is traditionally solved by spectral or algebraic methods. We show how computing the leading principal component could be reduced to solving a \textit{small} number of well-conditioned {\it convex} optimization problems. This gives rise to a new efficient method for PCA based on recent advances in stochastic methods for convex optimization. 

In particular we show that given a $d\times d$ matrix $\X = \frac{1}{n}\sum_{i=1}^n\x_i\x_i^{\top}$ with top eigenvector $\u$ and top eigenvalue $\lambda_1$ it is possible to:
\begin{itemize}
\item compute a unit vector $\w$ such that $(\w^{\top}\u)^2 \geq 1-\epsilon$ in $\tilde{O}\left({\frac{d}{\delta^2}+N}\right)$ time, where $\delta = \lambda_1 - \lambda_2$ and $N$ is the total number of non-zero entries in $\x_1,...,\x_n$,

\item compute a unit vector $\w$ such that $\w^{\top}\X\w \geq \lambda_1-\epsilon$ in $\tilde{O}(d/\epsilon^2)$ time.
\end{itemize}
To the best of our knowledge, these bounds are the fastest to date for a wide regime of parameters. These results could be further accelerated when $\delta$ (in the first case) and $\epsilon$ (in the second case) are smaller than $\sqrt{d/N}$.

\end{abstract}

\section{Introduction}

Since its introduction by Pearson \cite{Pearson} and Hotelling \cite{Hotelling}, the principle component analysis technique of finding the subspace of largest variance in the data has become ubiquitous in unsupervised learning, feature generation and data visualization. 

For data given as a set of $n$ vectors in $\mathbb{R}^d$, $\x_1,...,\x_n$, denote by $\X$ the normalized covariance matrix $\X = \frac{1}{n}\sum_{i=1}^n\x_i\x_i^{\top}$. The PCA method finds the $k$-dimensional subspace such that the projection of the data onto the subspace has largest variance. Formally, let $\W \in \reals^{d \times k}$ be an orthogonal projection matrix, PCA  could be formalized as the following optimization problem.

\begin{equation}\label{eq:pca}
\max_{\W \in \reals^{d \times k} \ , \W^T \W = \I} \| \X\W \|_F^2 , \tag{PCA}
\end{equation}

where $\Vert\cdot\Vert_F$ is the Frobenius norm. Note that the above optimization problem is an inherently non-convex optimization problem even for $k=1$.
Henceforth  we focus on the problem of finding with high precision only the leading principal component, i.e. on the case $k=1$. 

Finding the leading principal component could be carried out using matrix factorization techniques in time $O(nd^2 + d^3)$, by explicitly computing the matrix $\X$ and then computing its singular value decomposition ($\textrm{SVD}$). However this requires super-linear time and potentially $O(d^2)$ space. 

Since super-linear times are prohibitive for large scale machine learning problems, approximation methods have been the focus of recent literature. Iterative eigenvector methods, such as the Lanczos method or Power method \cite{MatrixCompBook}, can be applied  without explicitly computing the matrix $\X$. These latter methods require the data to be well-conditioned, and the spectral gap, i.e., the distance between largest and second largest eigenvalues of $\X$, to be bounded away from zero. 
  
If we let $\delta > 0$ denote this spectral gap, then the Power and Lanczos methods requires roughly $(\lambda_1/\delta)^{-\gamma}$ passes over the entire data to compute the leading PC which amounts to $\tilde{O}((\lambda_1/\delta)^{\gamma}N)$ total time, where $\gamma=1$ for the Power method and $\gamma = 1/2$ for the Lanczos method, and $N$ it total number of non-zero entries in the vectors $\x_1,...,\x_n$.

Thus, iterative methods replace the expensive super-linear operations of matrix factorization by linear-time operations, but require several passes over the data. Depending on the spectral gap of $\X$, the latter methods may also be prohibitively expensive. This motivates the study of methods that retain the best from both worlds: linear time per iteration, as well as doing only a small number (i.e., at most logarithmic in the natural parameters of the problem) of passes over the entire data. 

In the convex optimization world, such hybrid results with simple iterations coupled with only a few passes over the data were obtained in recent years \cite{Zhang13, Zhang13b, KonecnyR13}.  Recently Shamir \cite{Shamir14, Shamir15} made headway in the non-convex spectral world by suggesting a stochastic optimization method for the problem, that is based Oja's randomized Power method  \cite{Oja82,DasguptaFreund} with an application of the variance reduction principle demonstrated in \cite{Zhang13}. 

Shamir's algorithm runs in total time of $\tilde{O}(\delta^{-2}d + N)$, assuming the availability of a unit vector $\w$ such that $(\w^{\top}\u)^2 = \Omega(1)$, called a ``warm start". Finding such a warm start vector $\w$ could take as much as $\tilde{O}(\sqrt{\lambda_1/\delta}\delta^{-2}d)$ time using existing methods \footnote{Finding a ``warm start" could be carried out either by applying iterative methods such as Power and Lanczos methods to the entire data or by applying them only to a small random sample of the data. Since we want to be overall competitive with these methods, we focus on the second option.}, hence the total running time becomes $\tilde{O}(\sqrt{\lambda_1/\delta}\delta^{-2}d + N) $. Quite remarkably, Shamir's result separates the dependency on the gap $\delta$ and the data size $N$.

Analyzing the Power method in case of stochastic updates, as done in \cite{DasguptaFreund, Shamir14}, 
is much more intricate than analyzing stochastic gradient algorithms for convex optimization, and indeed both the analysis of Oja's algorithm in \cite{Oja82} and Shamir's algorithm in \cite{Shamir14, Shamir15} are quite elaborate.

In this paper we continue the search for efficient algorithms for PCA. 
Our main contribution, at a high-level, is in showing that the problem of computing the largest eignevector of a positive semidefinite matrix could be reduced to solving only a poly-logarithmic number of well conditioned unconstrained smooth and strongly convex optimization problems. These well-conditioned convex optimization problems could then be solved by a variety of algorithms that are available in the convex and stochastic optimization literature, and in particular, the recent algorithmic advances in stochastic convex optimization, such as the results in \cite{Zhang13, Zhang13b, KonecnyR13,Harchaoui15, Kakade15}.
As a result, we derive algorithms that in terms of running time, their worst case running time starting from a \textbf{``cold-start"} is equivalent or better than that of the algorthm of Shamir initialized with a \textbf{``warm-start"} . 

\subsection{Problem setting and main results}
Assume we are given a set of $n$ vectors in $\mathbb{R}^d$, $\x_1,...,\x_n$, where $n>d$ and let us denote by $\X$ the normalized covariance matrix $\X = \frac{1}{n}\sum_{i=1}^n\x_i\x_i^{\top}$. Assume further that $\X$ has an eigengap of $\delta$, i.e. $\lambda_1(\X)-\lambda_2(\X) = \delta$, and w.l.o.g. that $\Vert{\x_i}\Vert \leq 1$ for all $i\in[n]$. Our goal is to find a unit vector $\w$ such that 
\begin{eqnarray*}
(\w^{\top}\u)^2 \geq 1-\epsilon ,
\end{eqnarray*}

where $\u$ is the leading eigenvector of $\X$ and $\epsilon$ is the desired accuracy parameter.

In the rest of the paper we denote the eigenvalues of $\X$ in descending order by $\lambda_1 \geq \lambda_2 \geq ... \geq \lambda_d$ and by $\u = \u_1, \u_2, ...,\u_d$ the corresponding eigenvectors. We denote by $N$ the total number of non-zero entries in the vectors $\x_1,...,\x_n$.

We assume we are given an estimate $\hat{\delta}$ such that 
\begin{eqnarray*}
c_1\delta \leq \hat{\delta} \leq c_2\delta ,
\end{eqnarray*}
for some universal constants $c_1,c_2$ which satisfy $c_2 - c_1 = \Theta(1)$. Note that finding such a $\hat{\delta}$ could be done in time that is logarithmic in $1/\delta$. 

The main result of this paper is the proof of the following high-level proposition.
\begin{prop}
There exists an algorithm that after solving a poly-logarithmic number of well-conditioned unconstrained convex optimization problems, returns a unit vector $\w$, such that $(\w^{\top}\u)^2 \geq 1- \epsilon$.
\end{prop}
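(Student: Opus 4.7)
The plan is to apply the classical \emph{shift-and-invert} preconditioner to $\X$ and to replace each application of the inverse by an inexact solve performed by a stochastic convex optimization method. Fix a shift $\lambda = \lambda_1 + c\hat\delta$ for a small constant $c$; then $\M := \lambda\I - \X$ is positive definite with smallest eigenvalue $\lambda - \lambda_1 = \Theta(\delta)$ (eigenvector $\u$) and every other eigenvalue in $[\lambda - \lambda_1 + \delta,\, \lambda]$. Consequently $\M^{-1}$ has $\u$ as its top eigenvector with a \emph{constant} multiplicative spectral gap $(\lambda - \lambda_2)/(\lambda - \lambda_1) = 1 + \Omega(1)$, so the ordinary power method applied to $\M^{-1}$ converges to $\u$ geometrically in $O(\log(d/\epsilon))$ outer iterations. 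We therefore only need to approximately compute the product $\M^{-1}\w$ that many times.

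Each such product is the minimizer of the quadratic
\[
f(\v) \;=\; \tfrac{1}{2}\v^\top\M\v - \w^\top\v \;=\; \frac{1}{n}\sum_{i=1}^n\bigl(\tfrac{1}{2}\v^\top(\lambda\I - \x_i\x_i^\top)\v - \w^\top\v\bigr),
\]
an unconstrained, finite-sum, smooth and strongly convex problem with smoothness at most $\lambda + 1 = O(1)$ and strong convexity $\Theta(\delta)$; its condition number $O(1/\delta)$ is independent of the ratio $\lambda_1/\lambda_d$ that plagues the ordinary power method on $\X$. Any of the modern stochastic solvers (SVRG, SDCA, accelerated SVRG, Katyusha, \ldots) therefore solves each such subproblem to accuracy $\epsilon'$ in the well-conditioned regime promised by the statement, giving us the required inner oracle.

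Two issues remain. First, we do not know $\lambda_1$; but knowing $\hat\delta$ suffices: start from the trivial over-estimate $\lambda = 1$ (since $\Vert\x_i\Vert\leq 1$ implies $\lambda_1 \leq 1$) and halve the excess $\lambda - \lambda_1$ geometrically, certifying via a few power iterations of the current $\M^{-1}$ when we enter the good window $\lambda - \lambda_1 \in [c_1\hat\delta,\, c_2\hat\delta]$ and stopping otherwise. This adds only an $O(\log(1/\delta))$ factor to the number of convex subproblems. Second, and this will be the main technical obstacle, the inexactness of each inner solve must be propagated through the outer power iteration without losing the tiny initial correlation $(\w_0^\top\u)^2 \geq \Omega(1/d)$ characteristic of a cold start. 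A noisy-power-method style inductive argument will handle this: provided the per-step additive error in the approximation of $\M^{-1}\w$ is a constant fraction smaller than the component orthogonal to $\u$ at that step, the geometric shrinkage survives intact. Since the inner solver reduces its error linearly in its running time, demanding this extra accuracy costs only additional $\mathrm{polylog}(d/(\delta\epsilon))$ factors inside each convex subproblem, preserving the overall poly-logarithmic count of well-conditioned convex problems.
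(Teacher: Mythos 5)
Your proposal follows essentially the same route as the paper: a shift-and-invert power method whose shift is driven down geometrically by certifying the excess $\lambda-\lambda_1$ through a few crude power iterations on $(\lambda\I-\X)^{-1}$, with each application of the inverse replaced by an inexact solve of the $O(1)$-smooth, $\Omega(\delta)$-strongly convex finite-sum quadratic, and with the inexactness of these inner solves absorbed at only polylogarithmic extra accuracy per subproblem. The one slip is initializing the shift at $\lambda=1$ rather than $1+\hat{\delta}$: since $\lambda_1$ may equal (or be arbitrarily close to) $1$, the very first subproblem could be singular or arbitrarily ill-conditioned, which the paper avoids by starting at $\lambda_{(0)}=1+\hat{\delta}$ so that $\lambda-\lambda_1$ stays bounded below by a constant multiple of $\hat{\delta}$ throughout.
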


Based on this proposition we prove the following theorems. 

\begin{theorem}\label{thm:main}
Fix $\epsilon >0, p>0$. There exists an algorithm that finds with probability at least $1-p$ a unit vector $\w$ such that $(\w^{\top}\u)^2 \geq 1-\epsilon$, in total time $\tilde{O}\left({\frac{d}{\delta^2}+N}\right)$.
\end{theorem}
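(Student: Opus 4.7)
The plan is to combine the main proposition stated just above, which reduces computation of the top eigenvector to a poly-logarithmic number of well-conditioned unconstrained smooth strongly convex subproblems, with a standard analysis of a variance-reduced stochastic method such as SVRG \cite{Zhang13} applied to each of these subproblems. Since the running time of such a method scales linearly with $N$ plus the condition number times $d$, it should be possible to match the $\tilde{O}(d/\delta^{2}+N)$ budget by showing that the subproblems produced by the reduction have effective condition number $\tilde{O}(1/\delta^{2})$.

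First, I would verify that each convex subproblem supplied by the proposition has the finite-sum structure $f(\y) = \frac{1}{n}\sum_{i=1}^{n} f_i(\y)$, where each $f_i$ involves only the single data point $\x_i$, is smooth with individual Lipschitz-gradient constant $O(1)$, and where the aggregate $f$ is $\mu$-strongly convex with $\mu = \Theta(\delta^{2})$ after a suitable shift and rescaling (so that each summand is individually convex and the overall Hessian is at least $\Omega(\delta^{2})\I$). For a finite sum of this shape, SVRG reaches any target accuracy in $\tilde{O}(n + L/\mu)$ stochastic-gradient steps; each such step costs $O(\nnz(\x_i))$ and the periodic full-gradient sweep costs $O(N)$ per epoch, so a single SVRG run on one subproblem terminates in time $\tilde{O}(N + d/\delta^{2})$.

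Second, I would multiply the per-subproblem cost by the poly-logarithmic number of outer calls promised by the proposition, absorb the log factors into the $\tilde{O}$, and conclude that the total running time is $\tilde{O}(d/\delta^{2}+N)$. To go from an in-expectation convergence guarantee of SVRG to a high-probability statement with failure probability at most $p$, I would employ a standard restart-and-verify trick: repeat each solver call a logarithmic number of times and keep the best iterate (validated by a cheap Rayleigh-quotient check), incurring an extra $\log(1/p)$ multiplicative overhead that is swallowed by the $\tilde{O}$. The calibration $c_1\delta \le \hat\delta \le c_2\delta$ from the problem setup loses at most constant factors throughout.

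The main obstacle I anticipate is error propagation across the outer reduction: the approximate solution of one subproblem is used to define the next, and the overall algorithm must remain stable under these inexact updates. Because the proposition guarantees that the subproblems are well conditioned, the amplification of an inner error by an outer step should be only a constant, so demanding inner precision that shrinks by a poly-logarithmic factor at each outer iteration suffices. A careful induction tracking both the systematic bias introduced by each inexact inverse step and the stochastic fluctuations of SVRG should then close the analysis and deliver both the claimed running time and the high-probability bound.
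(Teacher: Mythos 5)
Your high-level skeleton matches the paper (poly-log many well-conditioned subproblems, each solved by a variance-reduced method, times a $\tilde{O}(1)$ outer count), but the mechanism by which you obtain the $d/\delta^2$ term is wrong, and this is a genuine gap rather than a cosmetic difference. The subproblems produced by the reduction are $F_{\w,\lambda}(\z)=\frac{1}{n}\sum_{i=1}^n f_i(\z)$ with $f_i(\z)=\frac{1}{2}\z^{\top}(\lambda\I-\x_i\x_i^{\top})\z-\w^{\top}\z$; the aggregate is $(\lambda-\lambda_1(\X))$-strongly convex, i.e.\ $\Theta(\delta)$-strongly convex, not $\Theta(\delta^2)$, and the individual summands are in general \emph{not} convex, since $\lambda$ is driven down to $\lambda_1+O(\delta)$, which can be well below $\Vert\x_i\Vert^2$ for many $i$. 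Your proposed ``suitable shift and rescaling'' cannot fix this: rescaling leaves the condition number unchanged, and redistributing a quadratic $\frac{c_i}{2}\Vert\z\Vert^2$ among the summands while preserving the sum would require $\frac{1}{n}\sum_i\Vert\x_i\Vert^2\leq\lambda$, i.e.\ $\mathrm{Trace}(\X)\lesssim\lambda_1$, which fails in general; any individually convex decomposition with cheap per-summand gradients simply does not exist here. Consequently the standard SVRG guarantee $\tilde{O}(n+L/\mu)$ you invoke is not applicable, and with the true parameters ($L=O(1)$, $\mu=\Theta(\delta)$) it would anyway give $\tilde{O}(N+d/\delta)$, not the claimed bound.

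What the paper actually does at this point is prove, in the appendix, a variant of the SVRG analysis that only assumes smoothness of each $f_i$ and strong convexity of the sum; the price is a \emph{squared} condition-number dependence, $\tilde{O}(n+\beta^2/\sigma^2)$ stochastic steps, and with $\beta=O(1)$, $\sigma=\Theta(\delta)$ this is exactly where $\tilde{O}(N+d/\delta^2)$ comes from (Theorem \ref{thm:svrg4pca} via Theorem \ref{thm:svrg_nonconvex}, combined with Theorem \ref{thm:convConvexPCA} and Lemma \ref{lem:convexProblemsBounds}). So to close your argument you must either supply this non-convex-summand SVRG analysis (or cite one), or find a genuinely different decomposition --- neither of which your proposal currently contains. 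A secondary, smaller inaccuracy: the inner accuracy needed by the outer reduction is not merely ``shrinking by a poly-logarithmic factor''; it must be of order $(\hat{\delta}/8)^{m_1+1}$, i.e.\ exponentially small in the $\tilde{O}(1)$ number of inexact power iterations, which is harmless only because the solver's dependence on the target accuracy is logarithmic --- this should be stated explicitly rather than hand-waved.
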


\begin{theorem}\label{thm:mainAcc}
Fix $\epsilon >0, p>0$. Assume that $\delta = o(\sqrt{d/N})$. There exists an algorithm that finds with probability at least $1-p$ a unit vector $\w$ such that $(\w^{\top}\u)^2 \geq 1-\epsilon$, in total time $\tilde{O}\left({\frac{N^{3/4}d^{1/4}}{\sqrt{\delta}}}\right)$.
\end{theorem}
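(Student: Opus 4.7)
The plan is to combine the preceding Proposition with an accelerated stochastic finite-sum solver. By the Proposition, the eigenvector computation reduces to $\tilde{O}(1)$ well-conditioned strongly-convex quadratic subproblems of the form
\[
\min_{\w\in\reals^d}\; f(\w) \;:=\; \tfrac{1}{2}\w^\top(\lambda\I - \X)\w - \b^\top\w,
\]
where $\lambda$ is chosen in the interval $[\lambda_1 + c_1\delta,\lambda_1 + c_2\delta]$ using $\hat\delta$, so that the Hessian has smallest eigenvalue $\Theta(\delta)$ and largest eigenvalue $O(1)$, giving condition number $\kappa=O(1/\delta)$. Writing $f=\tfrac{1}{n}\sum_i f_i$ with $f_i(\w)=\tfrac{\lambda}{2}\|\w\|^2-\tfrac{1}{2}(\x_i^\top\w)^2-\b^\top\w$, each summand is convex (since $\lambda\geq 1\geq\|\x_i\|^2$) and $O(1)$-smooth, while the sum is $\Theta(\delta)$-strongly convex.

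Next I would solve each subproblem with an accelerated stochastic variance-reduced method, such as Katyusha or a Catalyst-SVRG scheme in the style of \cite{Harchaoui15, Kakade15}. Such methods reach high accuracy in $T=\tilde{O}(n+\sqrt{n\kappa})=\tilde{O}(n+\sqrt{n/\delta})$ stochastic-gradient iterations on sums of $n$ $O(1)$-smooth components with $\Theta(\delta)$-strongly-convex sum. A naive implementation pays $O(d)$ per stochastic gradient because of the dense $\lambda\w$ term in $\nabla f_i$; the standard lazy-update trick---store $\w$ implicitly as a warm-start plus a scalar-scaled momentum vector plus sparse corrections supported on the $\x_i$'s seen so far, refreshed by one $O(N)$ full-gradient snapshot per epoch---reduces the amortized per-iteration cost to $O(\nnz(\x_i))$. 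The total cost per subproblem is then $\tilde{O}(N+N/\sqrt{n\delta})$.

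Finally I would verify that, under the assumption $\delta=o(\sqrt{d/N})$, both of these terms are $\tilde{O}(N^{3/4}d^{1/4}/\sqrt\delta)$. The hypothesis is exactly equivalent to $N\leq N^{3/4}d^{1/4}/\sqrt\delta$, while $n\geq 1$ and $N\leq nd$ together give $\sqrt{N/d}\leq n$, which in turn yields $N/\sqrt{n\delta}\leq N^{3/4}d^{1/4}/\sqrt\delta$. Multiplying by the $\tilde{O}(1)$ outer subproblems gives the bound of Theorem~\ref{thm:mainAcc}.

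The hardest step is the second one: preserving the $\sqrt{\kappa}$ acceleration while exploiting sparsity. Off-the-shelf accelerated variance-reduced methods have dense momentum and averaging updates that, implemented verbatim, re-introduce a factor of $d$ per iteration and destroy the desired balance. Handling this properly requires either a two-level Catalyst approach (an accelerated outer loop wrapping a sparse-friendly SVRG inner solver) or a careful implicit-iterate implementation of Katyusha, with the standard convergence analysis reverified in that representation; a secondary concern is to ensure that the constants $c_1,c_2$ controlling $\lambda$ are consistent with the Proposition's ``well-conditioned'' guarantee so that $\kappa=\Theta(1/\delta)$ holds uniformly across subproblems.
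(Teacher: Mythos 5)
Your overall architecture (reduce to $\tilde{O}(1)$ strongly convex quadratic subproblems, solve each with an accelerated variance-reduced method, then check the arithmetic under $\delta=o(\sqrt{d/N})$) matches the paper, and your final numerology is correct. But there is a genuine gap at the central step: you assert that each summand $f_i(\z)=\frac{1}{2}\z^{\top}(\lambda\I-\x_i\x_i^{\top})\z-\b^{\top}\z$ is convex ``since $\lambda\geq 1\geq\Vert\x_i\Vert^2$''. That premise is false for the subproblems that matter. The shrinking procedure drives $\lambda$ down to $\lambda_1+\Theta(\delta)$, and $\lambda_1$ is only guaranteed to satisfy $\delta\leq\lambda_1\leq 1$; whenever $\lambda_1<\Vert\x_i\Vert^2-\Theta(\delta)$ (e.g.\ $\lambda_1=1/2$ with some $\Vert\x_i\Vert=1$), the Hessian $\lambda\I-\x_i\x_i^{\top}$ is indefinite, so the individual components are non-convex even though their average is $\Omega(\delta)$-strongly convex. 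This is exactly the issue the paper flags explicitly, and it breaks your appeal to Katyusha/Catalyst rates of the form $\tilde{O}(n+\sqrt{n\kappa})$ component gradients: those rates are proved for convex components. For ``sum of non-convex components with strongly convex sum'' the known variance-reduction guarantee is weaker --- the paper reproves SVRG in this setting with inner-loop length scaling as $(\beta/\sigma)^2$ rather than $\beta/\sigma$ (its appendix theorem), and then accelerates it generically via the framework of \cite{Harchaoui15}, obtaining per-subproblem time $\tilde{O}\bigl(\sqrt{\beta/\sigma}\,T_G^{3/4}T_g^{1/4}\bigr)=\tilde{O}\bigl(N^{3/4}d^{1/4}/\sqrt{\delta}\bigr)$ with $T_G=O(N)$, $T_g=O(d)$; the hypothesis $\delta=O(\sqrt{d/N})$ is precisely the condition $\beta=\Omega(\sqrt{T_G/T_g}\,\sigma)$ needed to make the Catalyst parameter choice valid. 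So the claimed total time is recovered, but by a different (and weaker) stochastic-optimization theorem than the one you invoke; your chain of estimates $\tilde{O}(N+N/\sqrt{n\delta})$ per subproblem is not justified in this non-convex-components regime.

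A secondary point: the sparse ``lazy-update'' implementation you identify as the hardest step is not actually needed in the paper's route. Its per-inner-iteration cost is simply $O(d)$ (the $d^{1/4}$ in the bound comes from $T_g=O(d)$), so no implicit representation of momentum/averaging iterates is required; the difficulty you would face there only arises because you are trying to exploit the (inapplicable) $\tilde{O}(n+\sqrt{n\kappa})$ rate with $O(\nnz(\x_i))$-cost iterations. Finally, two small hygiene items the paper handles and you should too: the subproblems must be solved to accuracy $\tilde{\epsilon}$ with $\log(1/\tilde{\epsilon})=\tilde{O}(1)$ measured in the \emph{distance} $\Vert\z-\z^*\Vert$ (via strong convexity), and the SVRG-type guarantee is in expectation, so one needs a standard amplification to get the probability-$1-p$ statement.
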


Throughout this work we use the notation $\tilde{O}(\cdot)$ to hide poly-logarithmic dependencies on $d, \epsilon^{-1}, p^{-1}, \delta^{-1}$.

\begin{table}[H]\label{table:prevwork:gap}
\begin{center}
  \begin{tabular}{| l | c | c |}
    \hline
   Method & Complexity  & Comments\\ \hline
   Power method  \cite{MatrixCompBook} & $\frac{\lambda_1}{\delta}N$ &\\ \hline
   Lanczos \cite{MatrixCompBook}   & $\sqrt{\frac{\lambda_1}{\delta}}N$ &\\ \hline
   VR-PCA \cite{Shamir14, Shamir15}  & $\sqrt{\frac{\lambda_1}{\delta}}\frac{d}{\delta^{2}} + N $ & \\ \hline
   Theorem \ref{thm:main} & $\frac{d}{\delta^2} + N$ & fastest when $\delta = \Omega(\sqrt{d/N})$ \\ \hline
   Theorem \ref{thm:mainAcc} ($\delta = o(\sqrt{d/N})$) & $\frac{N^{3/4}d^{1/4}}{\sqrt{\delta}}$ & fastest when $\lambda_1 = \omega(\sqrt{d/N})$, $ \delta = o(\sqrt{d/N})$ \\ \hline
  \end{tabular}
  \caption{Comparison with previous eigengap-based results. Note that the result of \cite{Shamir14, Shamir15} apply in general only from a ``warm-start", i.e. when initialized with a unit vector $\w_0$ such that $(\w_0^{\top}\u)^2 = \Omega(1)$. Finding such a warm-start could be carried out efficiently using a sample of roughly $1/\delta^2$ matrices $\x_i\x_i^{\top}$ and applying the Lanczos method to this sample.}
\end{center}
\end{table}

\subsubsection{Gap-free results}

It makes since to not only consider the problem of approximating the leading eigenvector of the matrix $\X$, but also the problem of approximating the corresponding eignevalue, that is the problem of finding a unit vector $\w$ such that
\begin{eqnarray*}
\w^{\top}\X\w \geq \lambda_1 -\epsilon.
\end{eqnarray*}

For this purpose we also prove the following theorems which are analogous to Theorems \ref{thm:main}, \ref{thm:mainAcc}.

\begin{theorem}\label{thm:main:gapfree}
Fix $\epsilon >0, p>0$. There exists an algorithm that finds with probability at least $1-p$ a unit vector $\w$ such that $\w^{\top}\X\w \geq \lambda_1 - \epsilon$, in total time $\tilde{O}\left({\frac{d}{\epsilon^2}}\right)$.
\end{theorem}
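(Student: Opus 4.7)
My plan is to reduce the gap-free problem to the gap-dependent Theorem~\ref{thm:main} via two ingredients: (a) matrix-Bernstein subsampling, which replaces the $N$ term by a $d/\epsilon^2$ term, and (b) a ``virtual gap'' argument that keeps the proof of Theorem~\ref{thm:main} working even when the true eigengap of the (sub-sampled) matrix is arbitrarily small.

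The first step is subsampling. I would draw $m = \tilde\Theta(\log(d/p)/\epsilon^2)$ indices $i_1,\ldots,i_m$ uniformly with replacement from $[n]$ and form $\hat{\X} = \frac{1}{m}\sum_{j=1}^m \x_{i_j}\x_{i_j}^\top$. Since $\|\x_i\x_i^\top\|_{\mathrm{op}} \le 1$ and $\Exp[\hat{\X}] = \X$, the matrix Bernstein inequality yields $\|\hat{\X}-\X\|_{\mathrm{op}} \le \epsilon/4$ with probability at least $1-p/2$. By Weyl's inequality this also gives $|\lambda_1(\hat{\X})-\lambda_1|\le\epsilon/4$, and the Rayleigh quotient $\w^\top\hat{\X}\w$ differs from $\w^\top\X\w$ by at most $\epsilon/4$ for any unit $\w$. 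Hence it suffices to find a unit vector $\w$ with $\w^\top \hat{\X}\w \ge \lambda_1(\hat{\X})-\epsilon/2$, and the total number of nonzero entries in the sample is at most $\tilde{O}(md) = \tilde{O}(d/\epsilon^2)$.

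The second step is to invoke the main algorithm from Theorem~\ref{thm:main} on $\hat{\X}$, but with the \textbf{input} gap estimate artificially set to $\hat\delta = \epsilon/4$ (rather than an estimate of the true gap). The running time guarantee of Theorem~\ref{thm:main} with this setting is $\tilde{O}(d/\hat\delta^{2} + N') = \tilde{O}(d/\epsilon^2)$, matching the target bound. If the actual gap of $\hat{\X}$ happens to satisfy $\lambda_1(\hat{\X})-\lambda_2(\hat{\X}) \ge \epsilon/4$, Theorem~\ref{thm:main} applies verbatim and returns $\w$ with $(\w^\top \u_1(\hat{\X}))^2 \ge 1 - \epsilon/(8\lambda_1)$, which immediately yields $\w^\top\hat{\X}\w \ge \lambda_1(\hat{\X}) - \epsilon/2$ and hence the desired bound on $\X$.

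The main obstacle, and the only place where new work is required, is the case in which the true gap of $\hat{\X}$ is \textbf{smaller} than $\hat\delta = \epsilon/4$. Here one cannot conclude that $\w$ is aligned with a single eigenvector. The remedy is to re-examine the proof of Theorem~\ref{thm:main} (which reduces to a poly-logarithmic number of well-conditioned shift-and-invert convex subproblems) and to run it with respect to a ``virtual top eigenvector,'' namely the subspace $V$ spanned by all eigenvectors of $\hat{\X}$ with eigenvalue at least $\lambda_1(\hat{\X})-\epsilon/4$. By construction the spectral gap between $V$ and its orthogonal complement is at least $\epsilon/4 = \hat\delta$, so the conditioning of every convex subproblem, the concentration bound on each stochastic solver, and the final power-method-style rounding all go through unchanged with $V$ playing the role of the top eigenvector. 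The conclusion becomes $\|\mathrm{Proj}_{V^\perp}\w\|^2 \le \epsilon/(8\lambda_1)$, and since every unit vector inside $V$ has Rayleigh quotient at least $\lambda_1(\hat{\X})-\epsilon/4$, a routine calculation gives $\w^\top \hat{\X}\w \ge \lambda_1(\hat{\X})-\epsilon/2$. Combining this with the concentration of Step~1 yields $\w^\top\X\w \ge \lambda_1-\epsilon$ with probability at least $1-p$, proving the theorem.
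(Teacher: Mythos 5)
Your proposal follows essentially the same route as the paper: subsample $\tilde{O}(\epsilon^{-2})$ of the rank-one terms via a matrix concentration inequality, then run the shift-and-invert convex-optimization scheme with $\epsilon$ playing the role of the gap estimate, and argue that the output lands in the span of eigenvectors with eigenvalue above $\lambda_1-O(\epsilon)$ — this is exactly the paper's Algorithm \ref{alg:convexEigVal} together with Lemma \ref{lem:inverseCond2} and Lemma \ref{lem:pm:span}. One caution on phrasing: the gap you actually need (and have by construction) is between $\lambda_1(\hat{\X})$ and the largest eigenvalue \emph{outside} $V$, not between the bottom of $V$'s spectrum and the top of $V^{\perp}$'s spectrum — eigenvalues just below the threshold can make the latter arbitrarily small, but the shift-and-invert conditioning (shift within $O(\epsilon)$ of $\lambda_1$) only needs the former to give a constant multiplicative gap in the inverted matrix, so your conclusion stands.
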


\begin{theorem}\label{thm:main:gapfreeAcc}
Fix $\epsilon = o(\sqrt{d/N}), p>0$.  There exists an algorithm that finds with probability at least $1-p$ a unit vector $\w$ such that $\w^{\top}\X\w \geq \lambda_1 - \epsilon$, in total time $\tilde{O}\left({\frac{N^{3/4}d^{1/4}}{\sqrt{\epsilon}}}\right)$.
\end{theorem}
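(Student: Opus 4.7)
The plan is to run essentially the same accelerated algorithm as in Theorem \ref{thm:mainAcc}, but with a \emph{synthetic} gap parameter $\hat{\delta} := c\cdot\epsilon$ for a small constant $c$, regardless of the true spectral gap of $\X$. Since the per-problem running time established for Theorem \ref{thm:mainAcc} scales as $\tilde{O}(N^{3/4} d^{1/4}/\sqrt{\hat{\delta}})$ and the reduction of the main proposition only requires solving a poly-logarithmic number of such subproblems, plugging in $\hat{\delta} = \Theta(\epsilon)$ immediately yields the target complexity $\tilde{O}(N^{3/4}d^{1/4}/\sqrt{\epsilon})$. The hypothesis $\epsilon = o(\sqrt{d/N})$ is precisely what puts us in the regime where accelerated stochastic solvers (e.g.\ the Catalyst / APCG-type schemes of \cite{Harchaoui15, Kakade15}) beat the non-accelerated ones, mirroring the $\delta = o(\sqrt{d/N})$ assumption of Theorem \ref{thm:mainAcc}.

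The substantive work is in the analysis of what the algorithm outputs when $\X$ has no true gap. I would split the argument into two cases based on the \emph{actual} eigengap $\delta = \lambda_1-\lambda_2$. If $\delta \geq \hat{\delta}$, then the true gap is at least as large as the synthetic one, so the analysis of Theorem \ref{thm:mainAcc} applies verbatim and returns a unit vector $\w$ with $(\w^{\top}\u_1)^2 \geq 1 - O(\epsilon/\lambda_1)$, which gives $\w^{\top}\X\w \geq \lambda_1 - O(\epsilon)$ directly. If instead $\delta < \hat{\delta}$, I would argue that the algorithm still produces a $\w$ whose mass concentrates on the ``approximate top eigenspace'' $V_{\epsilon} := \myspan\{\u_i : \lambda_i \geq \lambda_1 - \epsilon/2\}$; since every eigenvalue in this block is within $\epsilon/2$ of $\lambda_1$, a Rayleigh quotient bound
\[
\w^{\top}\X\w \;\geq\; (\lambda_1 - \epsilon/2)\sum_{i : \u_i \in V_\epsilon}(\w^{\top}\u_i)^2 \;\geq\; \lambda_1 - \epsilon
\]
follows as long as the mass outside $V_\epsilon$ is at most $O(\epsilon/\lambda_1)$.

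To establish this concentration on $V_\epsilon$ in the no-gap case, I would inspect the shift-and-invert style reduction behind the main proposition. Each subproblem is of the form $\min_{\x} \tfrac{1}{2}\x^{\top}(\hat{\lambda}\I - \X)\x - \b^{\top}\x$ where $\hat{\lambda}$ slightly overestimates $\lambda_1$ by $\Theta(\hat{\delta})$; the exact solution is $(\hat{\lambda}\I - \X)^{-1}\b$, and the coefficient of $\u_i$ in this vector is multiplied by $1/(\hat{\lambda}-\lambda_i)$. For $\u_i \in V_\epsilon$ this denominator is $O(\hat{\delta})$, while for $\u_i \notin V_\epsilon$ it is $\Omega(\hat{\delta} + \epsilon/2) = \Omega(\epsilon)$. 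Iterating the shift-and-invert step $O(\log(1/\epsilon))$ times (exactly as in the gap-based proof) amplifies the mass in $V_\epsilon$ geometrically, yielding the required concentration without any need for $\lambda_1 - \lambda_2$ to be positive. The main obstacle, therefore, is carrying this amplification argument through in the gap-free setting, where one must track the cumulative error from approximately (rather than exactly) solving each convex subproblem and show that the tolerances used in Theorem \ref{thm:mainAcc} are already strong enough to guarantee the needed concentration on $V_\epsilon$.
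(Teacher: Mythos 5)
Your proposal follows essentially the same route as the paper: run the shift-and-invert, convex-optimization reduction with $\epsilon$ playing the role of the gap estimate, show the output concentrates on the near-top eigenspace $\{\u_i : \lambda_i > \lambda_1-\epsilon/2\}$ because the shifted-inverse matrix separates those eigenvalues from the rest by a constant factor (the paper's Lemma \ref{lem:inverseCond2} together with Lemma \ref{lem:pm:span}), conclude via the same Rayleigh-quotient bound, and implement each of the poly-logarithmically many subproblems with the accelerated SVRG solver of Theorem \ref{thm:Accsvrg4pca}, whose hypothesis $\lambda-\lambda_1 = \Theta(\epsilon) = o(\sqrt{d/N})$ is exactly your regime. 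Your case split on the true gap $\delta$ is unnecessary (and "verbatim" is slightly off when $\delta \gg \hat{\delta}$, since the gap-based analysis assumes $\hat{\delta}$ is a constant-factor estimate of $\delta$), because the gap-free concentration argument you give in the second case already handles both cases uniformly, which is precisely how the paper argues.
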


We note that Theorem \ref{thm:main:gapfree}, in which the bound is independent of $n$, applies also in the case when $\X$ is not given explicitly, but only through a stochastic oracle that when queried, returns a rank-one matrix $\x_i\x_i^{\top}$ sampled at random from an unknown distribution $\mD$ that satisfies $\X = \E_{\x\sim\mD}[\x\x^{\top}]$. See section \ref{sec:gapfree} for more details.

\begin{table}[H]\label{table:prevwork:gapfree}
\begin{center}
  \begin{tabular}{| l | c | c |}
    \hline
   Method & Complexity  & Comments\\ \hline
   Power method  \cite{EigenvaluesApprox} & $\frac{\lambda_1}{\epsilon}N$ &\\ \hline
   Lanczos \cite{EigenvaluesApprox}   & $\sqrt{\frac{\lambda_1}{\epsilon}}N$ &\\ \hline
   Sample PM   & $\frac{\lambda_1}{\epsilon}\frac{d}{\epsilon^2}$ & \\ \hline
   Sample Lanczos   & $\sqrt{\frac{\lambda_1}{\epsilon}}\frac{d}{\epsilon^2}$ & \\ \hline
   Theorem \ref{thm:main:gapfree} & $d/\epsilon^2$ & fastest when $\epsilon = \Omega\left({\max\{\frac{d^{2/3}}{\lambda_1^{1/3}N^{1/3}}, \sqrt{d/N}\}}\right)$ \\ \hline
   Theorem \ref{thm:main:gapfreeAcc} ($\epsilon =o(\sqrt{d/N})$) & $\frac{N^{3/4}d^{1/4}}{\sqrt{\epsilon}}$ & fastest when $\lambda_1 = \omega(\sqrt{d/N})$, $\epsilon =o(\sqrt{d/N})$ \\ \hline   
  \end{tabular}
  \caption{Comparison with previous eigengap-free results. Sample PM and Sample Lanczos refer to the application of the standard Power Method and Lanczos algorithms to a random sample of size roughly $\epsilon^{-2}$, chosen uniformly at random from the matrices $\{\x_i\x_i^{\top}\}_{i=1}^n$. Such a sample suffices to approximate the top eigenvalue to precision $\epsilon$ (using standard concentration results for matrices such as the Matrix Hoeffding inequality \cite{Tropp12}). }
\end{center}
\end{table}

The rest of this paper is organized as follows. In Section \ref{sec:prelim} we give relevant preliminaries on classical methods for the computation of the largest eigenvector and related tools, and also preliminaries on convex optimization. In Section \ref{sec:PowerMethodBasedAlg} we present the core idea for our algorithms: a \textit{shrinking inverse power method} algorithm that computes the leading eigenvector after computing only a poly-logarithmic number of matrix-vector products. Based on this algorithm, in Section \ref{sec:convexEV} we present our convex optimization-based eigenvector algorithm that requires to solve only a poly-logarithmic number of well-conditioned convex unconstrained optimization problems in order to compute the largest eigenvector. In Section \ref{sec:SVRG4PCA} we combine the result of Section \ref{sec:convexEV} with recent fast stochastic methods for convex optimization to prove Theorems \ref{thm:main}, \ref{thm:mainAcc}. In Section \ref{sec:gapfree} we prove Theorems \ref{thm:main:gapfree}, \ref{thm:main:gapfreeAcc} .

\section{Preliminaries}\label{sec:prelim}

\subsection{Classical algorithms for the leading eigenvector problem}\label{sec:powerm}

\subsubsection{The Power Method}

Our main algorithmic tool for proving the convergence of our method is the classical \textit{Power Method} for approximating the leading eigenvalue and eigenvector of a positive definite matrix.

\begin{algorithm}[H]
\caption{\textsc{Power  Method}}
\label{alg:powerm}
\begin{algorithmic}[1]
\STATE Input: a positive definite matrix $\M$
\STATE Let $\w_0$ to be a random unit vector
\FOR{$t=1,2,...$}
\STATE $\w_t \gets \frac{\M\w_{t-1}}{\Vert{\M\w_{t-1}}\Vert}$
\ENDFOR
\end{algorithmic}
\end{algorithm}

In our analysis we will require the following theorem that gives guarantees on the approximation error of Algorithm \ref{alg:powerm}. Both parts of the theorem follows essentially form the same analysis. Part one upper bounds the number of iterations of the algorithm required to achieve a crude approximation to the leading eigenvalue, and part two upper bounds the number of iterations required to achieve a high-precision approximation for the leading eigenvector.

\newcommand{\TpmCrude}{T^{\textsc{PM}}_{crude}}
\newcommand{\TpmAcc}{T^{\textsc{PM}}_{acc}}

\begin{theorem}\label{thm:pm}
Let $\M$ be a positive definite matrix and denote its eigenvalues in descending order by $\lambda_1, \lambda_2,...,\lambda_d$, and let $\u_1,\u_2,...,\u_d$ denote the corresponding eigenvectors. Denote $\delta = \lambda_1 - \lambda_2$ and $\kappa = \frac{\lambda_1}{\delta}$. Fix an error tolerance $\epsilon > 0$ and failure probability $p > 0$. Define:
\begin{eqnarray*}
\TpmCrude(\epsilon,p) = \lceil{\frac{1}{\epsilon}\ln\left({\frac{18d}{p^2\epsilon}}\right)}\rceil,  \qquad
\TpmAcc(\kappa,\epsilon,p) = \lceil{\frac{\kappa}{2}\ln\left({\frac{9d}{p^2\epsilon}}\right)}\rceil .
\end{eqnarray*}
Then, with probability $1-p$ it holds that
\begin{enumerate}
\item (crude regime) $\forall t \geq \TpmCrude(\epsilon,p)$: $\w_t^{\top}\M\w_t \geq (1-\epsilon)\lambda_1$.
\item (accurate regime)  $\forall t \geq \TpmAcc(\kappa,\epsilon,p)$: $(\w_t^{\top}\u_1)^2 \geq 1-\epsilon$.
\end{enumerate}
In both cases, the success probability depends only on the random variable $(\w_0^{\top}\u_1)^2$.
\end{theorem}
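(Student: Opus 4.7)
The plan is to analyze Algorithm \ref{alg:powerm} by diagonalizing $\M$ and tracking the growth of the coefficient of $\w_t$ along each eigenvector. Write the initial vector in the eigenbasis as $\w_0=\sum_{i=1}^d\alpha_i\u_i$ with $\sum_i\alpha_i^2=1$. Since normalization does not affect directions, one checks inductively that
\[
\w_t \;=\; \frac{\M^t\w_0}{\|\M^t\w_0\|} \;=\; \frac{\sum_i \alpha_i\lambda_i^t\u_i}{\sqrt{\sum_i\alpha_i^2\lambda_i^{2t}}}.
\]
From this formula, both parts of the theorem reduce to elementary calculations in the eigenbasis, modulo a lower bound on $\alpha_1^2=(\w_0^\top\u_1)^2$ that holds with probability $1-p$ over the random initialization.

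For the accurate regime, I would compute
\[
(\w_t^\top\u_1)^2 \;=\; \frac{\alpha_1^2\lambda_1^{2t}}{\sum_i\alpha_i^2\lambda_i^{2t}}
\;\geq\; \frac{\alpha_1^2}{\alpha_1^2+(1-\alpha_1^2)(\lambda_2/\lambda_1)^{2t}},
\]
and then use $\lambda_2/\lambda_1=1-1/\kappa$ together with $(1-1/\kappa)^{2t}\leq e^{-2t/\kappa}$. Rearranging, the condition $(\w_t^\top\u_1)^2\geq 1-\epsilon$ reduces to $e^{-2t/\kappa}\leq \epsilon\alpha_1^2/(1-\epsilon)$, giving the desired threshold $t\gtrsim (\kappa/2)\ln(1/(\epsilon\alpha_1^2))$.

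For the crude regime, I would write the Rayleigh quotient as
\[
\w_t^\top\M\w_t \;=\; \lambda_1 \;-\; \frac{\sum_i\alpha_i^2(\lambda_1-\lambda_i)\lambda_i^{2t}}{\sum_i\alpha_i^2\lambda_i^{2t}},
\]
and split the index set into $S_1=\{i:\lambda_i\geq(1-\epsilon/2)\lambda_1\}$ and $S_2=\{i:\lambda_i<(1-\epsilon/2)\lambda_1\}$. Terms in $S_1$ contribute at most $(\epsilon/2)\lambda_1$ to the fraction automatically, while terms in $S_2$ are killed by the factor $(1-\epsilon/2)^{2t}\leq e^{-t\epsilon}$ relative to the $\alpha_1^2\lambda_1^{2t}$ lower bound on the denominator. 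This yields the threshold $t\gtrsim(1/\epsilon)\ln(1/(\epsilon\alpha_1^2))$, with no dependence on a spectral gap.

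The step I expect to be the most delicate is the probabilistic lower bound on $\alpha_1^2$: for a uniformly random unit vector $\w_0\in\mathbb{R}^d$, a standard Gaussian-representation argument gives $\Pr[\alpha_1^2\leq p^2/(cd)]\leq p$ for a suitable small constant $c$ (e.g. realize $\w_0$ as $\g/\|\g\|$ with $\g$ standard Gaussian, use $|\g_1|\geq p$ and $\|\g\|^2\leq cd/p$ with the right tail bounds). Plugging this lower bound into the two thresholds above produces exactly the quantities $\TpmCrude(\epsilon,p)$ and $\TpmAcc(\kappa,\epsilon,p)$ stated in the theorem. Because $(\w_t^\top\u_1)^2$ and $\w_t^\top\M\w_t$ are monotonically nondecreasing in $t$ along the power iteration (which follows from the eigenbasis expansion), the bound extends to all $t$ beyond the threshold, and since only $\alpha_1^2=(\w_0^\top\u_1)^2$ enters the argument, the success probability depends only on this quantity, as claimed.
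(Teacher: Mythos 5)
Your proposal is correct and follows essentially the same route as the paper's own proof: expand $\w_t$ in the eigenbasis of $\M$, lower-bound the initial overlap $(\w_0^{\top}\u_1)^2$ by roughly $p^2/(9d)$ with probability $1-p$ (the paper cites Lemma 5 of Arora--Rao--Vazirani where you sketch the Gaussian-representation argument), and then kill the contribution of eigenvectors with eigenvalue below $(1-\epsilon/2)\lambda_1$ (crude regime) or below $\lambda_2$ (accurate regime) via the exponential decay $(\lambda_i/\lambda_1)^{2t}$, which yields exactly the stated thresholds. The only cosmetic differences are that you phrase the crude-regime bound as $\lambda_1$ minus a correction and invoke monotonicity in $t$, whereas the paper notes the decay bound itself holds for every $t$ beyond the threshold; both are fine.
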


A proof is given in the appendix for completeness.

\subsubsection{The Inverse Power Method and Conditioning}\label{sec:invPM}

As seen in Theorem \ref{thm:pm}, for a given matrix $\X$, the convergence rate of the \textit{Power Method} algorithm is strongly connected with the condition number $\kappa(\X)=\frac{\lambda_1(\X)}{\delta(\X)}$ which can be quite large. Consider now the following matrix	
\begin{eqnarray*}
\M^{-1} := \left({\lambda\I - \X}\right)^{-1},
\end{eqnarray*}
where $\lambda$ is a parameter.

Note that if $\X$ is positive semidefinite and $\lambda > \lambda_1(\X)$, then $\M^{-1}$ is positive definite. Furthermore, the eigenvectors of $\M^{-1}$ are the same as those of $\X$ and its eigenvalue are (in descending order) $\lambda_i(\M^{-1}) = \frac{1}{\lambda-\lambda_i(\X)}$.
Thus, if our goal is to compute the leading eigenvector of $\X$ we might as well compute the leading eigenvector of $\M^{-1}$. This is also known as the \textit{Inverse Method} \cite{MatrixCompBook}.

The following lemma shows that with a careful choice for the parameter $\lambda$, we can make the condition number $\kappa(\M^{-1})=\frac{\lambda_1(\M^{-1})}{\delta(\M^{-1})}$ to be much smaller than that of the original matrix $\X$.

\begin{lemma}[Inverse Conditioning]\label{lem:inverseCond}
Fix a scalar $a > 0$. Let $\M^{-1} = (\lambda\I-\X)^{-1}$ such that $\lambda_1(\X) + a\delta(\X) \geq \lambda > \lambda_1(\X)$. It holds that
\begin{eqnarray*}
\frac{\lambda_1(\M^{-1})}{\delta(\M^{-1})} \leq 1+ a .
\end{eqnarray*}
\end{lemma}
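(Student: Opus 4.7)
The plan is to directly compute the condition number of $\M^{-1}$ in terms of the spectrum of $\X$ and the parameter $\lambda$, and then use the constraint on $\lambda$ to bound it by $1+a$.

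First I would write down explicitly the eigenvalues of $\M^{-1}$. Since $\X$ and $\lambda \I - \X$ share the same eigenvectors, and since $\lambda > \lambda_1(\X) \geq \lambda_i(\X)$ for all $i$, the matrix $\lambda\I - \X$ is positive definite with eigenvalues $\lambda - \lambda_i(\X) > 0$. Inverting, $\M^{-1}$ has eigenvalues $1/(\lambda - \lambda_i(\X))$. The smallest denominator corresponds to $\lambda_1(\X)$, so the ordering is reversed and the top eigenvalue of $\M^{-1}$ is $\lambda_1(\M^{-1}) = 1/(\lambda - \lambda_1(\X))$ while the second is $\lambda_2(\M^{-1}) = 1/(\lambda - \lambda_2(\X))$.

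Next I would compute the spectral gap of $\M^{-1}$ by a common-denominator manipulation:
\begin{equation*}
\delta(\M^{-1}) = \frac{1}{\lambda - \lambda_1(\X)} - \frac{1}{\lambda - \lambda_2(\X)} = \frac{\lambda_1(\X) - \lambda_2(\X)}{(\lambda - \lambda_1(\X))(\lambda - \lambda_2(\X))} = \frac{\delta(\X)}{(\lambda - \lambda_1(\X))(\lambda - \lambda_2(\X))}.
\end{equation*}
Dividing $\lambda_1(\M^{-1})$ by this, the factor $(\lambda - \lambda_1(\X))$ cancels, leaving the clean identity
\begin{equation*}
\frac{\lambda_1(\M^{-1})}{\delta(\M^{-1})} = \frac{\lambda - \lambda_2(\X)}{\delta(\X)}.
\end{equation*}

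Finally I would plug in the assumption on $\lambda$. Writing $\lambda - \lambda_2(\X) = (\lambda - \lambda_1(\X)) + \delta(\X)$ and using $\lambda \leq \lambda_1(\X) + a\delta(\X)$, we get $\lambda - \lambda_2(\X) \leq (1+a)\delta(\X)$, which yields the claimed bound $\lambda_1(\M^{-1})/\delta(\M^{-1}) \leq 1+a$.

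There is no real obstacle here; the lemma is essentially an algebraic identity followed by a one-line inequality. The only point that requires any care is the reversal of the eigenvalue ordering when passing from $\lambda\I - \X$ to its inverse, which must be justified by the positivity of $\lambda - \lambda_i(\X)$ (guaranteed by the hypothesis $\lambda > \lambda_1(\X)$).
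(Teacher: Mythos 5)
Your proposal is correct and follows essentially the same route as the paper: compute the eigenvalues of $\M^{-1}$ explicitly, simplify the ratio $\lambda_1(\M^{-1})/\delta(\M^{-1})$ to $(\lambda-\lambda_2(\X))/\delta(\X) = (\lambda-\lambda_1(\X)+\delta(\X))/\delta(\X)$, and bound it by $1+a$ using the hypothesis on $\lambda$. The remark about the reversal of the eigenvalue ordering under inversion is a fine (if implicit in the paper) point of care, but otherwise the two arguments coincide.
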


\begin{proof}
We denote by $\lambda_1,\lambda_2,\delta$ the values $\lambda_1(\X), \lambda_2(\X)$ and $\delta(\X)$ respectively.

It holds that
\begin{eqnarray*}
\lambda_1(\M^{-1}) = \frac{1}{\lambda- \lambda_1} , \quad \lambda_2(\M^{-1}) = \frac{1}{\lambda - \lambda_2} .
\end{eqnarray*}

Thus we have that
\begin{eqnarray*}
\frac{\lambda_1(\M^{-1})}{\delta(\M^{-1})}  &=& \frac{1}{\lambda - \lambda_1}\left({\frac{1}{\lambda - \lambda_1} - \frac{1}{\lambda - \lambda_2}}\right)^{-1} \\
&=& 
\frac{1}{\lambda - \lambda_1}\left({\frac{1}{\lambda - \lambda_1} - \frac{1}{\lambda - \lambda_1+\delta}}\right)^{-1} = \frac{1}{\lambda - \lambda_1}\left({\frac{\delta}{(\lambda-\lambda_1)(\lambda-\lambda_1+\delta)}}\right)^{-1} \\
&=& \frac{\lambda-\lambda_1+ \delta}{\delta} \leq 1 + \frac{a\delta}{\delta} = 1 + a,
\end{eqnarray*}
and the lemma follows.
\end{proof}

Of course, a problem with the above suggested approach is that we don't know how to set the parameter $\lambda$ to be close enough to $\lambda_1(\X)$.  The following simple lemma shows that by approximating the largest eignevalue of $(\lambda\I-\X)^{-1}$, we can derive bounds on the suboptimality gap $\lambda-\lambda_1(\X)$, which in turn can be used to better tune $\lambda$.

\begin{lemma}\label{lem:invapprox}
Fix a positive semidefinite matrix $\X$ and denote the largest eigenvalue by $\lambda_1$. Let $\lambda > \lambda_1$ and consider a unit vector $\w$ such that
\begin{eqnarray*}
\w^{\top}\left({\lambda\I - \X}\right)^{-1}\w \geq (1-\epsilon)\lambda_1\left({\left({\lambda\I - \X}\right)^{-1}}\right) ,
\end{eqnarray*}
for some $\epsilon \in (0,1]$.

Denote $\Delta = \frac{1-\epsilon}{\w^{\top}\left({\lambda\I - \X}\right)^{-1}\w}$. 
Then it holds that 
\begin{eqnarray*}
(1-\epsilon)(\lambda-\lambda_1) \leq \Delta \leq \lambda-\lambda_1 .
\end{eqnarray*}
\end{lemma}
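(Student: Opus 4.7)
The plan is to establish the two inequalities separately, each by a one-line manipulation that directly uses the spectral property of $(\lambda\I - \X)^{-1}$: namely that its largest eigenvalue is exactly $\frac{1}{\lambda - \lambda_1}$ (since $\lambda > \lambda_1$ guarantees positive definiteness, and the eigenvectors of $(\lambda\I-\X)^{-1}$ coincide with those of $\X$, with eigenvalues $\frac{1}{\lambda - \lambda_i}$, maximized at $i=1$).

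For the upper bound $\Delta \leq \lambda - \lambda_1$, I would simply plug the hypothesis $\w^{\top}(\lambda\I-\X)^{-1}\w \geq \frac{1-\epsilon}{\lambda - \lambda_1}$ into the denominator of the definition of $\Delta$. Since $1-\epsilon > 0$ and the denominator is positive, the inequality flips under reciprocation and we get $\Delta \leq \frac{1-\epsilon}{(1-\epsilon)/(\lambda-\lambda_1)} = \lambda - \lambda_1$.

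For the lower bound $\Delta \geq (1-\epsilon)(\lambda-\lambda_1)$, I would invoke the Rayleigh quotient bound: for the unit vector $\w$,
\[
\w^{\top}(\lambda\I-\X)^{-1}\w \;\leq\; \lambda_1\!\left((\lambda\I-\X)^{-1}\right) \;=\; \frac{1}{\lambda - \lambda_1}.
\]
Reciprocating and multiplying by $1-\epsilon$ yields $\Delta \geq (1-\epsilon)(\lambda - \lambda_1)$.

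There is no real obstacle here; the only thing to be careful about is that all quantities involved are strictly positive (so that taking reciprocals preserves/reverses inequalities correctly), which is guaranteed by $\lambda > \lambda_1$, $\epsilon \in (0,1]$, and positive semidefiniteness of $\X$. The lemma is essentially the observation that a $(1-\epsilon)$-approximation of the top eigenvalue of $(\lambda\I-\X)^{-1}$ — which equals $1/(\lambda - \lambda_1)$ — translates, under reciprocation, into a two-sided multiplicative bound on the suboptimality $\lambda - \lambda_1$.
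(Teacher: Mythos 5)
Your proof is correct and follows essentially the same route as the paper: both rest on sandwiching the Rayleigh quotient $\w^{\top}(\lambda\I-\X)^{-1}\w$ between $(1-\epsilon)\lambda_1\bigl((\lambda\I-\X)^{-1}\bigr)$ and $\lambda_1\bigl((\lambda\I-\X)^{-1}\bigr)=\frac{1}{\lambda-\lambda_1}$, then taking reciprocals in the definition of $\Delta$. The only cosmetic difference is that you treat the two inequalities separately (and your remark that $1-\epsilon>0$ is harmless even at the edge case $\epsilon=1$, where $\Delta=0$ and the bounds hold trivially).
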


\begin{proof}

According to our assumption on $\w$ it holds that
\begin{eqnarray*}
\lambda_1\left({(\lambda\I - \X)^{-1}}\right) \geq \w^{\top}\left({\lambda\I - \X}\right)^{-1}\w \geq (1-\epsilon)\lambda_1\left({\left({\lambda\I - \X}\right)^{-1}}\right) .
\end{eqnarray*}
Thus by our definition of $\Delta$ and the fact that $\lambda_1\left({\lambda\I - \X)^{-1}}\right) = \frac{1}{\lambda-\lambda_1}$, it holds that
\begin{eqnarray*}
(1-\epsilon)(\lambda-\lambda_1) \leq \Delta \leq \lambda-\lambda_1,
\end{eqnarray*}

and the lemma follows.

\end{proof}

Thus, if we set for instance $\epsilon=1/2$, and then define $\lambda' = \lambda - \Delta$, then by Lemma \ref{lem:invapprox} we have that $\lambda' - \lambda_1 \leq \frac{\lambda-\lambda_1}{2}$.

\subsection{Matrix Inversion via Convex Optimization}\label{sec:matrixInv}

\subsubsection{Smoothness and strong convexity of continuous functions}
\begin{definition}[smooth function]\label{def:smoothfunc}
We say that a function $f:\mathbb{R}^d\rightarrow\mathbb{R}$ is $\beta$ smooth if for all $\x,\y\in\mathbb{R}^d$ it holds that
\begin{eqnarray*}
\Vert{\nabla{}f(\x)-\nabla{}f(\y)}\Vert \leq \beta\Vert{\x-\y}\Vert .
\end{eqnarray*}
\end{definition}

\begin{definition}[strongly convex function]\label{def:strongconvexfunc}
We say that a function $f:\mathbb{R}^d\rightarrow\mathbb{R}$ is $\alpha$-strongly convex if for all $\x,\y\in\mathbb{R}^d$ it holds that
 \begin{eqnarray*}
f(\y) \geq f(\x) + \nabla{}f(\x)^{\top}(\y-\x) + \frac{\alpha}{2}\Vert{\x-\y}\Vert^2 .
\end{eqnarray*}
\end{definition}

The above definition combined with first order optimality conditions imply that for an $\alpha$-strongly convex function $f$, if $\x^*=\arg\min_{\x\in\mathbb{R}^d}f(\x)$, then for any $\x\in\mathbb{R}^d$ it holds that
\begin{eqnarray}\label{ie:strongconvex}
f(\x)-f(\x^*) \geq \frac{\alpha}{2}\Vert{\x-\x^*}\Vert^2 .
\end{eqnarray}

\begin{lemma}[smoothness and strong convexity of quadratic functions]\label{lem:quadFuncProp}
Consider the function 
\begin{eqnarray*}
f(\x) = \frac{1}{2}\x^{\top}\M\x + \b^{\top}\x, 
\end{eqnarray*}
where $\M\in\mathbb{R}^{d\times d}$ is symmetric and $\b\in\mathbb{R}^d$. If $\M \succeq{0}$ then $f(\x)$ is $\lambda_1(\M)$-smooth and $\lambda_d(\M)$-strongly convex, where $\lambda_1, \lambda_d$ denote the largest and smallest eigenvalues of $\M$ respectively.Otherwise, $f(\x)$ is $\Vert{\M}\Vert$-smooth.
\end{lemma}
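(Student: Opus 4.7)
The plan is to reduce both claims to the standard identity that a quadratic function's gradient is affine in its argument, so its Hessian is constant and equal to $\M$, and then to read off the smoothness and strong convexity parameters from the spectrum of $\M$.

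First I would compute $\nabla f(\x) = \M\x + \b$, which is immediate from the symmetry of $\M$. For the smoothness bound, I would then write
\begin{eqnarray*}
\Vert \nabla f(\x) - \nabla f(\y) \Vert = \Vert \M(\x - \y)\Vert \leq \Vert\M\Vert_{\mathrm{op}} \cdot \Vert\x - \y\Vert,
\end{eqnarray*}
so $f$ is $\Vert\M\Vert$-smooth in general, and when $\M \succeq 0$ the operator norm $\Vert\M\Vert_{\mathrm{op}}$ equals the largest eigenvalue $\lambda_1(\M)$, giving the claimed $\lambda_1(\M)$-smoothness.

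For the strong convexity part, assuming $\M \succeq 0$, I would verify the defining inequality in Definition \ref{def:strongconvexfunc} directly by expanding
\begin{eqnarray*}
f(\y) - f(\x) - \nabla f(\x)^{\top}(\y - \x) = \tfrac{1}{2}\y^{\top}\M\y - \tfrac{1}{2}\x^{\top}\M\x - \x^{\top}\M(\y - \x),
\end{eqnarray*}
and noting (using symmetry of $\M$) that this simplifies to $\tfrac{1}{2}(\y - \x)^{\top}\M(\y - \x)$. Since $\M$ is symmetric and positive semidefinite, $(\y-\x)^\top \M (\y-\x) \geq \lambda_d(\M) \Vert \y - \x\Vert^2$ by the Rayleigh--Ritz characterization of $\lambda_d(\M)$, which produces the $\lambda_d(\M)$-strong convexity bound.

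There is no substantive obstacle here: both claims follow from a one-line gradient computation together with the standard Rayleigh bounds $\lambda_d(\M)\Vert\z\Vert^2 \leq \z^{\top}\M\z \leq \lambda_1(\M)\Vert\z\Vert^2$ (for the positive semidefinite case) and from $\Vert\M\z\Vert \leq \Vert\M\Vert \cdot \Vert\z\Vert$ (for the general smoothness case). The only mild care needed is to keep track of the symmetry of $\M$ when expanding the quadratic form, so that the cross terms combine into the clean square $(\y-\x)^{\top}\M(\y-\x)$.
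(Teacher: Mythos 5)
Your proof is correct: the gradient computation, the operator-norm bound for smoothness, and the expansion of $f(\y)-f(\x)-\nabla f(\x)^{\top}(\y-\x)$ into $\tfrac{1}{2}(\y-\x)^{\top}\M(\y-\x)$ followed by the Rayleigh bound are all sound. The paper states this lemma without proof as a standard fact, and your argument is exactly the canonical one it implicitly relies on, so there is nothing further to reconcile.
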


\subsubsection{Matrix inversion via convex optimization}

In order to apply the Inverse Method discussed in the previous subsection, we need to apply Power Method steps (Algorithm \ref{alg:powerm}) to the matrix $(\lambda\I-\X)^{-1}$. Denote $\M = \lambda\I-\X$. Thus on each iteration of the Inverse Method we need to compute a matrix-vector product of the form $\M^{-1}\w$. This requires in general to solve a linear system of equations. However, it could be also approximated arbitrarily well using convex optimization. Consider the following optimization problem:
\begin{eqnarray}\label{eq:matrixInvProb}
\min_{\z\in\mathbb{R}^d}\{F(\z) := \frac{1}{2}\z^{\top}\M\z - \w^{\top}\z \} .
\end{eqnarray}

By the first order optimality condition, we have that an optimal solution for Problem \eqref{eq:matrixInvProb} - $\z^*$ satisfies that $\nabla{}F(\z^*) = \M\z^* - \w = 0$, meaning,
\begin{eqnarray*}
\z^* = \M^{-1}\w .
\end{eqnarray*}

Note that under the assumption that $\lambda > \lambda(\X)$ (as stated Subsection \ref{sec:invPM}) it holds that $\M$ is positive definite and hence invertible.

Most importantly, note that under this assumption on $\lambda$ it further follows from Lemma \ref{lem:quadFuncProp} that $F(\z)$ is $\lambda_d(\M) = (\lambda-\lambda_1(\X))$-strongly convex and $\lambda_1(\M) = (\lambda - \lambda_d(\X))$-smooth and thus could be minimized very efficiently via algorithms for convex minimization.

Since by using algorithms for convex minimization we can only find an approximated-minimizer of $F(\z)$, we must discuss the effect of the approximation error on the convergence of the proposed algorithms. As it will turn out, the approximation error that we will care about it the distance $\Vert{\z-\z^*}\Vert$ where $\z$ is an approximated minimizer of $F$. The following lemma, which follows directly from the strong convexity of $F$ and Eq. \eqref{ie:strongconvex}, ties between the approximation error of a point $\z$ with respect to the function $F$ and the distance to the optimal solution $\z^*$.

\begin{lemma}
Given a positive semidefinite matrix $\X$, a vector $\w$, a scalar $\lambda$ such that $\lambda > \lambda_1(\X)$ and an error tolerance $\epsilon$, let $\M = \lambda\I-\X$, and denote by $\z^*$ the minimizer of $F(\z)$ - as defined in Eq. \eqref{eq:matrixInvProb}. Then, for any $\z$ it holds that
\begin{eqnarray*}
\Vert{\z-\z^*}\Vert \leq \sqrt{\frac{2(F(\z)-F(\z^*))}{\lambda-\lambda_1(\X)}} .
\end{eqnarray*}
\end{lemma}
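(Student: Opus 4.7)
The plan is to identify this as a direct corollary of strong convexity applied to the quadratic objective $F$. First, I would observe that $F(\z) = \tfrac{1}{2}\z^{\top}\M\z - \w^{\top}\z$ is a quadratic function of the form handled by Lemma \ref{lem:quadFuncProp}, with Hessian $\M = \lambda\I - \X$. Since $\X \succeq 0$ and $\lambda > \lambda_1(\X)$ by hypothesis, $\M$ is positive definite; its smallest eigenvalue is $\lambda_d(\M) = \lambda - \lambda_1(\X) > 0$. Consequently, Lemma \ref{lem:quadFuncProp} tells us that $F$ is $\alpha$-strongly convex with $\alpha = \lambda - \lambda_1(\X)$.

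Next, I would invoke the standard strong convexity bound at the minimizer, namely the inequality \eqref{ie:strongconvex}, which in our notation reads
\begin{eqnarray*}
F(\z) - F(\z^*) \;\geq\; \frac{\alpha}{2}\,\Vert \z - \z^*\Vert^2 \;=\; \frac{\lambda - \lambda_1(\X)}{2}\,\Vert \z - \z^*\Vert^2 .
\end{eqnarray*}
Since $\lambda - \lambda_1(\X) > 0$, we can divide through, obtain $\Vert \z - \z^*\Vert^2 \leq 2(F(\z)-F(\z^*))/(\lambda-\lambda_1(\X))$, and take the (non-negative) square root to conclude the statement.

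There is essentially no obstacle here: the only non-mechanical step is the identification $\lambda_d(\M) = \lambda - \lambda_1(\X)$, which follows because subtracting $\X$ from $\lambda\I$ reverses the order of eigenvalues and the largest eigenvalue of $\X$ yields the smallest eigenvalue of $\M$. Everything else is a direct substitution into previously stated facts (Lemma \ref{lem:quadFuncProp} and inequality \eqref{ie:strongconvex}), so the whole argument should fit in a few lines.
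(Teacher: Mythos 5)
Your argument is correct and is exactly the route the paper takes: the paper states that the lemma ``follows directly from the strong convexity of $F$ and Eq. \eqref{ie:strongconvex}'', which is precisely your chain of Lemma \ref{lem:quadFuncProp} giving $(\lambda-\lambda_1(\X))$-strong convexity and then inequality \eqref{ie:strongconvex} rearranged. Nothing further is needed.
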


\subsubsection{Fast stochastic gradient methods for smooth and strongly convex optimization}\label{sec:stochasticOptPrem}

In this subsection we briefly survey recent developments in stochastic optimization algorithms for convex optimization which we leverage in our analysis in order to get the fast rates for PCA.

Consider an optimization problem of the following form
\begin{equation}\label{eq:convexprob}
\min_{\z}\{F(\z): = \frac{1}{n}\sum_{i=1}^nf_i(\z)\} \tag{P}
\end{equation}
where we assume that each $f_i$ is convex and $\beta$-smooth and that the sum $F(\z)$ is $\sigma$-strongly convex.


We are going to show that the PCA problem could be reduced to solving a series of convex optimization problems that takes the form of Problem \eqref{eq:convexprob} (this is actually not a precise statement since in our case each function $f_i$ won't be convex on its own and we will need to address this issue). Thus, we are interested in fast algorithms for solving Problem \eqref{eq:convexprob}. 

The standard gradient descent method can solve Problem \eqref{eq:convexprob} to $\epsilon$ precision in  $O\left({(\beta/\sigma)\log(1/\epsilon)}\right)$ iterations were each iteration requires to compute the gradient of $F(\z)$. Thus the overall time becomes  $\tilde{O}\left({\frac{\beta}{\sigma}T_G}\right)$ where we denote by $T_G$ the time to evaluate the gradient direction of $F$.
of any of the functions $f_i$.
The dependence on the condition number $\frac{\beta}{\sigma}$ could be dramatically improved without increasing significantly the per-iteration complexity, by using Nesterov's accelerated method that requires $O\left({\sqrt{\beta/\sigma}\log(1/\epsilon)}\right)$ iterations, and overall time of $\tilde{O}\left({\sqrt{\beta/\sigma}T_G}\right)$ 

However, both methods could be quite computationally expensive when both $\frac{\beta}{\sigma}$ and $T_g$ are large.

Another alternative is to use stochastic gradient descent, which on each iteration $t$, performs a gradient improvement step based on a single function $f_{i_t}$ where $i_t$ is chosen uniformly at random from $[n]$. This single random gradient serves as an estimator for the full gradient. The benefit of this method is that each iteration is extremely cheap - only requires to evaluate the gradient of a single function. However the convergence rate of this method is roughly $1/(\sigma\epsilon)$ which is ineffective when $\epsilon$ is very small. The intuitive reason for the slow convergence is the large variance of the gradient estimator.

Each of the methods mentioned above, the deterministic gradient descent, and the stochastic one has its own caveats. The deterministic gradient method does not consider the special structure of Problem \eqref{eq:convexprob} which is given by a sum of functions, and on the other hand, the stochastic gradient method does not exploit the fact that the sum of functions is finite. 

Recently, a new family of stochastic gradient descent-based methods was devised, which is tailored to tackling Problem \eqref{eq:convexprob} \cite{RouxSB12, ShwartzZhang, Zhang13, Zhang13b, KonecnyR13, Harchaoui15, Kakade15}. Roughly speaking, these methods apply cheap stochastic gradient descent update steps, but use the fact the the objective function is given in the form a finite sum, to construct a gradient estimator with reduced variance. 

For instance, the \textsc{SVRG} algorithm presented in \cite{Zhang13}, requires $O(\log(1/\epsilon))$ iterations to reach $\epsilon$ accuracy, where each iteration requires computing a single full gradient of $F(\z)$ and roughly $O(\beta/\sigma)$ cheap stochastic gradient updates. Thus the total running time becomes $O\left({\left({\frac{\beta}{\sigma}T_g + T_G}\right)\log(1/\epsilon)}\right)$, where $T_g$ denotes the worst case time to evaluate the gradient of a single function $f_i$.

The following theorem summarizes the application of \textsc{SVRG} to solving Problem \eqref{eq:convexprob}. For details see \cite{Zhang13}.

\begin{theorem}[Convergnec of SVRG]\label{thm:svrg}
Fix $\epsilon > 0$. Assume each $f_i(\z)$ is $\beta$-smooth and $F(\z)$ is $\sigma$-strongly convex. Then the \textsc{SVRG} Algorithm detailed in \cite{Zhang13} finds in total time $$O\left({\left({\frac{\beta}{\sigma}T_g + T_G}\right)\log(1/\epsilon)}\right)$$ a vector $\hat{\z}\in\mathbb{R}^d$ such that
\begin{eqnarray*}
\E[F(\hat{\z})] - \min_{\z\in\mathbb{R}^d}F(\z) \leq \epsilon .
\end{eqnarray*}
\end{theorem}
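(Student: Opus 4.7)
The plan is to reproduce the standard SVRG analysis of Johnson--Zhang, which is essentially an epoch-based argument combining a variance-reduction bound with a one-step descent inequality. I will describe the SVRG iterate first and then sketch the two estimates that together yield geometric contraction per epoch.

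Recall the algorithm. We maintain an anchor $\tilde{\z}_s$ that is updated once per epoch. At the beginning of epoch $s$ we compute the full gradient $\nabla F(\tilde{\z}_s)$ once (cost $T_G$). Then for $m$ inner iterations we draw $i_t$ uniformly from $[n]$, form the variance-reduced estimator
\[
g_t \;=\; \nabla f_{i_t}(\z_{t-1}) \,-\, \nabla f_{i_t}(\tilde{\z}_s) \,+\, \nabla F(\tilde{\z}_s),
\]
which is unbiased, $\E[g_t \mid \z_{t-1}] = \nabla F(\z_{t-1})$, and update $\z_t = \z_{t-1} - \eta g_t$ at cost $O(T_g)$ per step. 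At the end of the epoch we set $\tilde{\z}_{s+1}$ equal to $\z_m$ (or a randomly chosen iterate among $\z_1,\dots,\z_m$, as in the original paper).

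The first key step is a variance bound for $g_t$. Using $\beta$-smoothness of each $f_i$ together with convexity, one obtains a bound of the shape
\[
\E\bigl\Vert g_t - \nabla F(\z_{t-1})\bigr\Vert^2 \;\leq\; 4\beta\bigl[\,F(\z_{t-1})-F(\z^*) \,+\, F(\tilde{\z}_s)-F(\z^*)\,\bigr].
\]
The crucial point is that the variance is controlled by the \emph{function value} suboptimality, not by a constant, so the estimator becomes exact as the algorithm converges. The second step is the standard one-step descent: expanding $\Vert \z_t - \z^*\Vert^2 = \Vert \z_{t-1} - \eta g_t - \z^*\Vert^2$, taking conditional expectation, using unbiasedness, convexity of $F$, and the variance bound above, one obtains
\[
\E\Vert \z_t - \z^*\Vert^2 \;\leq\; \E\Vert \z_{t-1} - \z^*\Vert^2 \,-\, 2\eta(1-2\eta\beta)\,\E[F(\z_{t-1})-F(\z^*)] \,+\, 4\eta^2\beta\,[F(\tilde{\z}_s)-F(\z^*)].
\]
Summing over $t=1,\dots,m$, telescoping $\Vert \z_t-\z^*\Vert^2$, and invoking the $\sigma$-strong convexity bound $\Vert \tilde{\z}_s - \z^*\Vert^2 \leq (2/\sigma)(F(\tilde{\z}_s)-F(\z^*))$ yields
\[
\E[F(\tilde{\z}_{s+1})-F(\z^*)] \;\leq\; \left(\frac{1}{\sigma\eta(1-2\eta\beta)m} + \frac{2\eta\beta}{1-2\eta\beta}\right)\E[F(\tilde{\z}_s)-F(\z^*)].
\]
Choosing, for example, $\eta = \Theta(1/\beta)$ and $m = \Theta(\beta/\sigma)$ makes the bracketed factor a constant $\rho<1$, so we get geometric contraction per epoch.

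The rest is bookkeeping. After $S = O(\log(1/\epsilon))$ epochs we have $\E[F(\tilde{\z}_S)-F(\z^*)] \leq \epsilon$. Each epoch costs one full gradient ($T_G$) plus $m = \Theta(\beta/\sigma)$ stochastic gradient evaluations ($T_g$ each), giving total time $O\!\left((\frac{\beta}{\sigma}T_g + T_G)\log(1/\epsilon)\right)$ as claimed. The main obstacle --- and the genuinely novel ingredient relative to vanilla SGD --- is the variance bound in the first step; once that is in hand, the rest is a routine strong-convexity telescoping argument. Since the theorem is literally quoted from Johnson--Zhang, the cleanest presentation is to refer the reader to that paper for the full calculation and only highlight the variance-reduction identity and the contraction inequality above.
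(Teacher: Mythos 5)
Your proposal is correct and matches the intended argument: the paper does not prove Theorem \ref{thm:svrg} itself but quotes it from \cite{Zhang13}, and your sketch is precisely the standard Johnson--Zhang analysis (unbiased variance-reduced estimator, variance bounded by function-value suboptimality via smoothness and convexity of each $f_i$, one-step descent plus telescoping and $\sigma$-strong convexity, then $\eta=\Theta(1/\beta)$, $m=\Theta(\beta/\sigma)$ giving geometric per-epoch contraction and the stated $O\left(\left(\frac{\beta}{\sigma}T_g+T_G\right)\log(1/\epsilon)\right)$ cost). The only point worth keeping explicit is the one you already flag parenthetically: the contraction inequality is proved for the averaged or randomly chosen inner iterate as the new anchor, not the last iterate.
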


In recent works \cite{Kakade15, Harchaoui15}, it was demonstrated how methods such as the SVRG algorithm could be further accelerated by improving the dependence on the condition number $\beta/\sigma$.




\section{The Basic Approach: a Shrinking Inverse Power Algorithm}\label{sec:PowerMethodBasedAlg} 

The goal of this section is to present a Power Method-based algorithm that requires to compute an overall \textbf{poly-logarithmic} number of matrix-vector products in order to find an $\epsilon$ approximation for the leading eigenvector of a given positive semidefinite matrix $\X$.

\begin{algorithm}[h]
\caption{\textsc{Shrinking Inverse Power Method}}
\label{alg:shrinkInvPM}
\begin{algorithmic}[1]
\STATE Input: matrix $\X\in\mathbb{R}^{n\times n}$ such that $\X\succeq 0$, $\lambda_1(\X) \leq 1$, an estimate $\hat{\delta}$ for $\delta(\X)$,  accuracy parameter $\epsilon\in(0,1)$
\STATE $\lambda_{(0)} \gets 1+\hat{\delta}$
\STATE $s \gets 0$
\REPEAT
\STATE $s \gets s+1$
\STATE Let $\M_s = (\lambda_{(s-1)}\I-\X)$
\STATE Apply the \textsc{Power Method} (Algorithm \ref{alg:powerm}) to the matrix $\M_s^{-1}$ to find a unit vector
$\w_s$ such that 
$$\w_s^{\top}\M_s^{-1}\w_s \geq \frac{1}{2}\lambda_1(\M_s^{-1})$$
\STATE $\Delta_s \gets \frac{1}{2}\cdot\frac{1}{ \w_s^{\top}\M_s^{-1}\w_{s}}$
\STATE $\lambda_{(s)} \gets \lambda_{(s-1)} - \frac{\Delta_s}{2}$
\UNTIL{$\Delta_s \leq \hat{\delta}$}
\STATE $\lambda_{(f)} \gets \lambda_{(s)}$
\STATE Let $\M_f = (\lambda_{(f)}\I-\X)$
\STATE Apply the \textsc{Power Method} (Algorithm \ref{alg:powerm}) to the matrix  $\M_f^{-1}$ to find a unit vector $\w_f$ such that 
$$(\w_f^{\top}\u)^2 \geq 1-\epsilon$$
\RETURN $\w_f$
\end{algorithmic}
\end{algorithm}

We prove the following theorem. 
\begin{theorem}\label{thm:basicAlgConv}
Assume $\hat{\delta}$ satisfies that $\hat{\delta} \in\left[{\frac{\delta}{2}, 2\delta}\right]$. There exists an implementation for Algorithm \ref{alg:shrinkInvPM} that requires computing at most
\begin{eqnarray*}
O\left({\log(d/p)\log(\delta^{-1}) + \log\left({\frac{d}{p\epsilon}}\right)}\right)  = \tilde{O}(1)
\end{eqnarray*}
matrix-vector products of the form $\M^{-1}\w$, where $\M$ is one of the matrices computed during the run of the algorithm ($\M_s$ or $\M_f$) and $\w$ is some vector,
such that with probability at least $1-p$ it holds that the output of the algorithm, the vector $\w_f$, satisfies:
\begin{eqnarray*}
(\w_f^{\top}\u)^2 \geq 1-\epsilon .
\end{eqnarray*}
\end{theorem}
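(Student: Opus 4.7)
I would track the shift sequence $\lambda_{(s)}$ and establish three facts in order: (i) every $\lambda_{(s)}$ stays strictly above $\lambda_1$, so each $\M_s$ is positive definite (and hence invertible); (ii) the gap $g_s := \lambda_{(s-1)} - \lambda_1$ contracts geometrically, bounding the outer loop length by $O(\log \delta^{-1})$; and (iii) at termination the final shifted matrix $\M_f$ has condition number $O(1)$, so the concluding accurate Power Method call is cheap. The correctness guarantee then follows since $\M_f^{-1}$ and $\X$ share eigenvectors.

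\textbf{Iteration analysis.} By construction $\w_s^{\top}\M_s^{-1}\w_s \geq \tfrac{1}{2}\lambda_1(\M_s^{-1})$, so Lemma \ref{lem:invapprox} with $\epsilon=1/2$ gives $\tfrac{1}{2}g_s \leq \Delta_s \leq g_s$. Combined with $\lambda_{(s)} = \lambda_{(s-1)} - \Delta_s/2$ this yields
\begin{equation*}
\tfrac{1}{4} g_s \leq g_{s+1} = g_s - \tfrac{\Delta_s}{2} \leq \tfrac{3}{4} g_s.
\end{equation*}
The left inequality keeps $g_{s+1}>0$, so every $\M_s$ is positive definite. The right inequality, starting from $g_1 = 1+\hat\delta-\lambda_1 \leq 1 + 2\delta \leq 3$, shows $g_s \leq 3(3/4)^{s-1}$, so after $S = O(\log\hat\delta^{-1}) = O(\log\delta^{-1})$ iterations we have $g_s \leq \hat\delta$, forcing $\Delta_s \leq g_s \leq \hat\delta$ and triggering the stopping rule.

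\textbf{Final step and complexity.} Once the loop terminates, $g_{s+1} \leq g_s \leq 2\Delta_s \leq 2\hat\delta \leq 4\delta$, so $\lambda_{(f)} - \lambda_1 \leq 4\delta$. Applying Lemma \ref{lem:inverseCond} with $a=4$ bounds $\kappa(\M_f^{-1}) \leq 5$. Since $\M_f^{-1}$ shares eigenvectors with $\X$, the accurate regime of Theorem \ref{thm:pm} applied to $\M_f^{-1}$ with $\kappa = O(1)$ produces $\w_f$ with $(\w_f^{\top}\u)^2 \geq 1-\epsilon$ in $O(\log(d/(p\epsilon)))$ matrix-vector products of the form $\M_f^{-1}\by$. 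Each outer iteration in turn invokes the crude regime of Theorem \ref{thm:pm} on $\M_s^{-1}$ with $\epsilon=1/2$, costing $O(\log(d/p'))$ products per call. Setting $p' = p/(S+1)$ and taking a union bound over the $S$ outer calls and the final call gives total failure probability at most $p$ and a total of $O(\log\delta^{-1}\cdot\log(d/p) + \log(d/(p\epsilon)))$ matrix-vector products, as claimed.

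\textbf{Main obstacle.} The main subtlety is coordinating the probabilistic success events across the $O(\log\delta^{-1})$ independent Power Method invocations. Theorem \ref{thm:pm} guarantees that success depends only on the initial random vector $\w_0^{\top}\u_1$, so running each call with a fresh independent random start lets a straightforward union bound carry through, absorbing the $\log\log\delta^{-1}$ overhead in $p'$ into the constant. Everything else is bookkeeping over the recursion $g_{s+1} \in [g_s/4,\, 3g_s/4]$.
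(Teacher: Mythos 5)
Your argument is correct and follows the same skeleton as the paper's: positivity of the shift gap by induction through Lemma \ref{lem:invapprox}, geometric contraction $g_{s+1}\leq \tfrac{3}{4}g_s$ giving an $O(\log\delta^{-1})$ bound on the loop, an $O(1)$ bound on $\kappa(\M_f^{-1})$ via Lemma \ref{lem:inverseCond} (your constants $4\delta$ and $\kappa\leq 5$ versus the paper's $3\delta$ and $\kappa\leq 4$ are immaterial), and the accurate regime of Theorem \ref{thm:pm} for the final call. The one place you genuinely diverge is the probability bookkeeping: you use fresh independent initializations and a union bound with $p'=p/(S+1)$, whereas the paper exploits the fact that all matrices $\M_s,\M_f$ share the leading eigenvector $\u$ and that, by Theorem \ref{thm:pm}, success depends only on $(\w_0^{\top}\u)^2$; running every invocation from the \emph{same} random start makes all invocations succeed simultaneously with probability $1-p$, with no union bound at all. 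Your route is valid (conditioning on the success of earlier calls so that each $\M_s$ is positive definite, the per-call failure bound applies and the union bound goes through), but it is not quite free: each crude call now costs $O(\log(d/p)+\log\log\delta^{-1})$ products, so your total is $O\left(\log\delta^{-1}\left(\log(d/p)+\log\log\delta^{-1}\right)+\log\frac{d}{p\epsilon}\right)$ -- still $\tilde{O}(1)$, but the $\log\log\delta^{-1}$ term is not a constant as your last paragraph suggests, and it is not dominated by $\log(d/p)$ in general. The paper's shared-initialization trick is what lets it state the cleaner bound $O\left(\log(d/p)\log\delta^{-1}+\log\frac{d}{p\epsilon}\right)$ exactly, and it is also the mechanism reused later for Algorithm \ref{alg:convexEV}, so it is worth adopting.
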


In order to prove Theorem \ref{thm:basicAlgConv} we need a few simple Lemmas.

First, it is important that throughout the run of Algorithm \ref{alg:shrinkInvPM}, the matrices $\M_s$ and the matrix $\M_f$ will be positive definite (and as a results so are their inverses). The following lemma shows that this is indeed the case.

\begin{lemma}\label{lem:postivieGap}
For all $s\geq 0$ it holds that $\lambda_{(s)} > \lambda_1$.
\end{lemma}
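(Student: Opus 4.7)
The plan is a short induction on $s$. The base case $s=0$ is immediate: by hypothesis $\lambda_1(\X)\le 1$ and $\hat\delta\ge \delta/2>0$, so
\[
\lambda_{(0)}=1+\hat\delta>1\ge \lambda_1.
\]

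For the inductive step, assume $\lambda_{(s-1)}>\lambda_1$. Then $\M_s=\lambda_{(s-1)}\I-\X$ is positive definite (its smallest eigenvalue is $\lambda_{(s-1)}-\lambda_1>0$), so $\M_s^{-1}$ is well-defined and positive definite, making the Power Method step in line 7 meaningful, and yielding a unit vector $\w_s$ with $\w_s^\top\M_s^{-1}\w_s\ge \tfrac12\lambda_1(\M_s^{-1})$. This is precisely the hypothesis of Lemma~\ref{lem:invapprox} with $\lambda=\lambda_{(s-1)}$ and error parameter $\epsilon=1/2$, and the quantity $\Delta_s$ defined in line 8 of the algorithm matches the $\Delta=\tfrac{1-\epsilon}{\w^\top\M^{-1}\w}$ of that lemma exactly. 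Hence Lemma~\ref{lem:invapprox} yields
\[
\tfrac12\bigl(\lambda_{(s-1)}-\lambda_1\bigr)\;\le\;\Delta_s\;\le\;\lambda_{(s-1)}-\lambda_1.
\]

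Using only the upper bound, $\Delta_s/2\le (\lambda_{(s-1)}-\lambda_1)/2$, so that
\[
\lambda_{(s)}=\lambda_{(s-1)}-\tfrac{\Delta_s}{2}\;\ge\;\lambda_{(s-1)}-\tfrac12\bigl(\lambda_{(s-1)}-\lambda_1\bigr)\;=\;\tfrac{\lambda_{(s-1)}+\lambda_1}{2}\;>\;\lambda_1,
\]
which closes the induction.

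There is no real obstacle here; the only point that deserves care is checking that the definitions of $\Delta_s$ in Algorithm~\ref{alg:shrinkInvPM} and $\Delta$ in Lemma~\ref{lem:invapprox} coincide when the lemma is invoked with $\epsilon=1/2$, and that the inductive hypothesis $\lambda_{(s-1)}>\lambda_1$ is exactly what is needed to legitimately apply Lemma~\ref{lem:invapprox} at stage $s$ (so the proof is self-consistent). The factor of $1/2$ in the update $\lambda_{(s)}=\lambda_{(s-1)}-\Delta_s/2$ — rather than $\Delta_s$ — is what guarantees we never overshoot past $\lambda_1$, and the induction above simply formalizes this.
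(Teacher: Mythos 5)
Your proof is correct and follows essentially the same route as the paper: induction on $s$, with the inductive step applying Lemma~\ref{lem:invapprox} (with $\epsilon=1/2$) to get $\Delta_s \le \lambda_{(s-1)}-\lambda_1$ and concluding $\lambda_{(s)} \ge \tfrac{\lambda_{(s-1)}+\lambda_1}{2} > \lambda_1$. The extra care you take in verifying that $\M_s$ is positive definite and that the algorithm's $\Delta_s$ matches the lemma's $\Delta$ is a welcome but inessential elaboration of the paper's argument.
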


\begin{proof}
The proof is by a simple induction. The claim clearly holds for $s=0$ since by our assumption $\lambda_1 \leq 1$. Suppose now that the claim holds for some iteration $s$.
According to Lemma \ref{lem:invapprox} it holds that the value $\Delta_{s+1}$ computed on iteration $s+1$ satisfies that
\begin{eqnarray*}
\Delta_{s+1} \leq \lambda_{(s)} - \lambda_1 .
\end{eqnarray*}
Hence, according to the algorithm, it holds that
\begin{eqnarray*}
\lambda_{(s+1)} = \lambda_{(s)} - \frac{\Delta_{s+1}}{2} \geq  \lambda_{(s)} - \frac{\lambda_{(s)} - \lambda_1}{2}
= \frac{\lambda_{(s)} + \lambda_1}{2} > \lambda_1, 
\end{eqnarray*}
where the last inequality follows from the induction hypothesis.
\end{proof}

The following lemma bounds the number of iterations of the loop in Algorithm \ref{alg:shrinkInvPM}.
\begin{lemma}\label{lem:loopBound}	
The \textit{repeat-until} loop is executed at most $O(\log(\hat{\delta}^{-1}))$ times.
\end{lemma}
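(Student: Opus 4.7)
The plan is to show that each pass through the repeat--until loop shrinks the suboptimality gap $\lambda_{(s)} - \lambda_1$ by a constant factor, and then bound the number of iterations by how many such shrinkings are needed before the gap drops below $\hat\delta$.

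First I would apply Lemma \ref{lem:invapprox} with accuracy parameter $\epsilon = 1/2$ (corresponding to the approximation guarantee $\w_s^\top \M_s^{-1} \w_s \geq \tfrac{1}{2}\lambda_1(\M_s^{-1})$ that Algorithm \ref{alg:shrinkInvPM} demands from its inner Power Method call). This immediately yields the two-sided bound
\begin{eqnarray*}
\tfrac{1}{2}(\lambda_{(s-1)} - \lambda_1) \;\leq\; \Delta_s \;\leq\; \lambda_{(s-1)} - \lambda_1 .
\end{eqnarray*}
Combined with the update rule $\lambda_{(s)} = \lambda_{(s-1)} - \Delta_s/2$ this gives the geometric contraction
\begin{eqnarray*}
\lambda_{(s)} - \lambda_1 \;=\; (\lambda_{(s-1)} - \lambda_1) - \tfrac{\Delta_s}{2} \;\leq\; \tfrac{3}{4}(\lambda_{(s-1)} - \lambda_1) ,
\end{eqnarray*}
where I used the lower bound on $\Delta_s$. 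Note that this calculation is valid because Lemma \ref{lem:postivieGap} ensures $\lambda_{(s-1)} > \lambda_1$ throughout, so $\M_s$ is positive definite and Lemma \ref{lem:invapprox} applies.

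Next I would observe that the gap starts at $\lambda_{(0)} - \lambda_1 = 1 + \hat\delta - \lambda_1 \leq 1 + \hat\delta = O(1)$, using the input assumption $\lambda_1 \leq 1$. Iterating the contraction gives $\lambda_{(s)} - \lambda_1 \leq (3/4)^s(1 + \hat\delta)$. The exit condition of the loop is $\Delta_s \leq \hat\delta$, and the upper bound $\Delta_s \leq \lambda_{(s-1)} - \lambda_1$ shows that it suffices for the gap itself to drop below $\hat\delta$ for the loop to terminate. Setting $(3/4)^s(1 + \hat\delta) \leq \hat\delta$ and solving for $s$ yields $s = O(\log((1+\hat\delta)/\hat\delta)) = O(\log(1/\hat\delta))$, which is the claimed bound.

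There is essentially no obstacle here beyond assembling the two bounds from Lemma \ref{lem:invapprox} correctly; the only subtlety is remembering to invoke Lemma \ref{lem:postivieGap} so that Lemma \ref{lem:invapprox}'s positive-definiteness precondition is met at every iteration, and to use the upper bound $\Delta_s \leq \lambda_{(s-1)} - \lambda_1$ (rather than the lower bound) when converting the termination criterion on $\Delta_s$ into a criterion on the gap.
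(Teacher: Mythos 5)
Your proposal is correct and follows essentially the same route as the paper's proof: the lower bound $\Delta_s \geq \tfrac{1}{2}(\lambda_{(s-1)}-\lambda_1)$ from Lemma \ref{lem:invapprox} gives the $\tfrac{3}{4}$-contraction of the gap, the choice $\lambda_{(0)} = 1+\hat{\delta}$ bounds the initial gap by a constant, and the upper bound $\Delta_s \leq \lambda_{(s-1)}-\lambda_1$ converts the gap criterion into the stopping condition (the paper merely makes explicit the one extra iteration needed after the gap falls below $\hat{\delta}$, which does not affect the $O(\log(\hat{\delta}^{-1}))$ bound). Your explicit appeal to Lemma \ref{lem:postivieGap} for the positive-definiteness precondition is a welcome, if minor, addition.
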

\begin{proof}
Fix an iteration $s$ of the loop. By applying Lemma \ref{lem:invapprox} with respect to the unit vector $\w_s$ we have that
\begin{eqnarray*}
\Delta_s \geq \frac{1}{2}(\lambda_{(s-1)}-\lambda_1).
\end{eqnarray*}
By the update rule of the algorithm it follows that
\begin{eqnarray}\label{eq:gapDec}
\lambda_{(s)} - \lambda_1 &=& \lambda_{(s-1)} - \frac{\Delta_s}{2} - \lambda_1 
\leq (\lambda_{(s-1)} - \lambda_1) - \frac{1}{4}(\lambda_{(s-1)}-\lambda_1) \nonumber \\
&=& \frac{3}{4}(\lambda_{(s-1)}-\lambda_1) .
\end{eqnarray}

Thus, after at most $T = \lceil{\log_{3/4}\left({\frac{\hat{\delta}}{\lambda_{(0)}-\lambda_1}}\right)}\rceil = O(\log(\hat{\delta}^{-1}))$ (using our choice of $\lambda_{(0)}$) iterations, we arrive at a value $\lambda_{(T)}$ which satisfies $\lambda_{(T)} - \lambda_1 \leq \hat{\delta}$. By Lemma \ref{lem:invapprox} it follows that in the following iteration it will hold that $\Delta_{T+1} \leq \hat{\delta}$ and the loop will end. Hence, the overall number of iterations is at most $T+1 = O(\log(\hat{\delta}^{-1}))$.
\end{proof}

Finally, the following lemma gives approximation guarantees on the estimate $\lambda_{(f)}$.

\begin{lemma}\label{lem:finalGapBound}
Suppose that all executions of the \textit{Power Method} in Algorithm \ref{alg:shrinkInvPM} are successful. Then
it holds that 
\begin{eqnarray}
\lambda_1 + \frac{3\hat{\delta}}{2} \geq \lambda_{(f)} \geq \lambda_1 + \frac{\hat{\delta}}{4}.
\end{eqnarray}

\end{lemma}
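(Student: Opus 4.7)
The plan is to let $s^{\star}$ denote the final value of $s$ in the repeat-until loop, so that $\lambda_{(f)} = \lambda_{(s^{\star})}$ and, by the termination criterion, $\Delta_{s^{\star}} \leq \hat{\delta}$. The only tool needed is Lemma~\ref{lem:invapprox}, applied with $\epsilon = \tfrac12$, which matches the quality of the Power Method call on line~7. That gives, for every iteration $s$ of the loop (using that all Power Method calls succeed, and that the matrices $\M_s$ are well-defined and positive definite by Lemma~\ref{lem:postivieGap}),
$$\tfrac{1}{2}\bigl(\lambda_{(s-1)} - \lambda_1\bigr) \;\leq\; \Delta_s \;\leq\; \lambda_{(s-1)} - \lambda_1.$$
Combined with the update rule $\lambda_{(s)} = \lambda_{(s-1)} - \Delta_s/2$, this sandwiches the gap shrinkage:
$$\tfrac{1}{2}(\lambda_{(s-1)} - \lambda_1) \;\leq\; \lambda_{(s)} - \lambda_1 \;\leq\; \tfrac{3}{4}(\lambda_{(s-1)} - \lambda_1).$$

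For the upper bound on $\lambda_{(f)}$, I will use the upper side of the sandwich at step $s^{\star}$: since $\Delta_{s^{\star}} \geq \tfrac{1}{2}(\lambda_{(s^{\star}-1)} - \lambda_1)$, we get $\lambda_{(s^{\star}-1)} - \lambda_1 \leq 2\Delta_{s^{\star}} \leq 2\hat{\delta}$, and then the shrinkage inequality above yields $\lambda_{(f)} - \lambda_1 \leq \tfrac{3}{4}\cdot 2\hat{\delta} = \tfrac{3}{2}\hat{\delta}$.

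For the lower bound I will split on $s^{\star}$. If $s^{\star} \geq 2$, the loop did not terminate at iteration $s^{\star}-1$, hence $\Delta_{s^{\star}-1} > \hat{\delta}$. The update rule together with $\Delta_{s^{\star}-1} \leq \lambda_{(s^{\star}-2)} - \lambda_1$ gives
$$\lambda_{(s^{\star}-1)} - \lambda_1 \;=\; (\lambda_{(s^{\star}-2)} - \lambda_1) - \tfrac{\Delta_{s^{\star}-1}}{2} \;\geq\; \tfrac{\Delta_{s^{\star}-1}}{2} \;>\; \tfrac{\hat{\delta}}{2},$$
and one more application of the shrinkage sandwich (lower half) gives $\lambda_{(f)} - \lambda_1 \geq \tfrac{1}{2}(\lambda_{(s^{\star}-1)} - \lambda_1) > \tfrac{\hat{\delta}}{4}$. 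The edge case $s^{\star} = 1$ is handled directly from the initialization: $\lambda_{(0)} - \lambda_1 = 1 + \hat{\delta} - \lambda_1 \geq \hat{\delta}$ since $\lambda_1 \leq 1$, so $\lambda_{(f)} - \lambda_1 = \lambda_{(1)} - \lambda_1 \geq \tfrac{1}{2}(\lambda_{(0)} - \lambda_1) \geq \tfrac{\hat{\delta}}{2} > \tfrac{\hat{\delta}}{4}$.

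There is no real obstacle: the proof is pure bookkeeping of the two-sided geometric shrinkage implied by Lemma~\ref{lem:invapprox}. The only mild subtlety is isolating the $s^{\star} = 1$ case, where one cannot invoke the ``loop did not terminate at step $s^{\star}-1$'' inequality and must instead appeal to the initial choice $\lambda_{(0)} = 1 + \hat{\delta}$.
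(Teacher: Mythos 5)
Your proposal is correct and follows essentially the same route as the paper's proof: both sides come from the two-sided bound of Lemma~\ref{lem:invapprox} combined with the update rule, with the upper bound using the termination condition $\Delta_{s^{\star}}\leq\hat{\delta}$, the lower bound using non-termination at step $s^{\star}-1$ (i.e.\ $\Delta_{s^{\star}-1}>\hat{\delta}$), and the $s^{\star}=1$ edge case handled via $\lambda_{(0)}-\lambda_1\geq\hat{\delta}$, exactly as in the paper. The only difference is cosmetic bookkeeping (you package the shrinkage as a sandwich inequality before substituting, while the paper substitutes directly), which changes nothing of substance.
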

\begin{proof}
Denote by $s_f$ the last iteration of the loop in Algorithm \ref{alg:shrinkInvPM}, and note that using this notation we have that $\lambda_{(f)} = \lambda_{(s_f)}$ and that $\Delta_{s_f} \leq \hat{\delta}$. Using Lemma \ref{lem:invapprox}, we thus have that
\begin{eqnarray*}
\lambda_{(f)} - \lambda_1 = \lambda_{(s_f-1)} - \frac{\Delta_{s_f}}{2} - \lambda_1 \leq 2\Delta_{s_f} - \frac{\Delta_{s_f}}{2} = \frac{3}{2}\Delta_{s_f} \leq \frac{3}{2}\hat{\delta},
\end{eqnarray*}
which gives the first part of the lemma.

For the second part, using Lemma \ref{lem:invapprox} again, we have that
\begin{eqnarray}\label{eq:2}
\lambda_{(f)} - \lambda_1 &=& \lambda_{(s_f-1)} - \frac{\Delta_{s_f}}{2} - \lambda_1 \geq \lambda_{(s_f-1)} - \lambda_1 - \frac{1}{2}(\lambda_{(s_f-1)}-\lambda_1) \nonumber \\
&=& \frac{1}{2}(\lambda_{(s_f-1)}-\lambda_1) .
\end{eqnarray}

In case $s_f = 1$, then by our choice of $\lambda_{(0)}$ we have that $\lambda_{(0)}-\lambda_1 \geq \hat{\delta}$, and the lemma follows. 
Otherwise, by unfolding Eq. \eqref{eq:2} one more time, we have that
\begin{eqnarray*}
\lambda_{(f)} - \lambda_1 \geq  \frac{1}{4}(\lambda_{(s_f-2)}-\lambda_1) \geq \frac{\Delta_{(s_f-1)}}{4} > \frac{\hat{\delta}}{4},
\end{eqnarray*}
where the second inequality follows from Lemma \ref{lem:invapprox} and the last inequality follows from the stopping condition of the loop.
\end{proof}

We can now prove Theorem \ref{thm:basicAlgConv}.

\begin{proof}

First a note regarding the success probability of the invocations of the \textit{Power Method} algorithm  in Algorithm \ref{alg:shrinkInvPM}: since, as stated in Theorem \ref{thm:pm}, the success of the PM algorithm depends only on the magnitude of $(\w_0^{\top}\u)^2$, and all matrices $\M_s,\M_f$ in Algorithm \ref{alg:shrinkInvPM} have the same leading eigenvector, as long as all invocations use the same random initial vector and number of steps that guarantees success with probability $1-p$, they all succeed together with probability at least $1-p$.

Let us now assume that all executions of the \textit{Power Method} algorithm in Algorithm \ref{alg:powerm} were successful.

According to Lemma \ref{lem:loopBound}, the loop is executed at most $O(\log(\hat{\delta}^{-1})) = O(\log(\delta^{-1}))$ times (following our assumption on $\hat{\delta}$).
Each iteration $s$ of the loop requires to invoke the \textit{Power Method} to approximate $\lambda_1(\M)$ up to a factor of $1/2$ which according to Theorem \ref{thm:pm}, requires computing $\TpmCrude(1/2,p) = O(\log(d/p))$ matrix-vector products, in order to succeed with probability at least $1-p$. 
Thus the overall number of matrix-vector products computed during the loop is $O\left({\log(d/p)\log(\delta^{-1})}\right)$.

According to lemma \ref{lem:finalGapBound} it holds that $\lambda_{(f)}-\lambda_1 \leq \frac{3}{2}\hat{\delta} \leq 3\delta$. Thus, according to Lemma \ref{lem:inverseCond}, we have that $$\kappa(\M_f^{-1}) = \frac{\lambda_1(\M_f^{-1})}{\delta(\M_f^{-1})} \leq 4 = O(1) .$$

Thus, in the final invocation of the PM algorithm, it requires at most $\TpmAcc(4, \epsilon, p) = O\left({\log\left({\frac{d}{p\epsilon}}\right)}\right)$ matrix-vector products to compute $\w_f$ as desired, with probability at least $1-p$.

Thus the overall number of matrix-vector products is
\begin{eqnarray*}
O\left({\log(d/p)\log(\delta^{-1}) + \log\left({\frac{d}{p\epsilon}}\right)}\right) .
\end{eqnarray*}

\end{proof}

\section{A Convex Optimization-based Eigenvector Algorithm}\label{sec:convexEV}

In this section we present our algorithm for approximating the largest eigenvector of a given matrix based on convex optimization.
The algorithm is based on Algorithm \ref{alg:shrinkInvPM} from the previous section, but replaces explicit computation of products between vectors and inverted matrices, with solving convex optimization problems, as detailed in Subsection \ref{sec:matrixInv}.

Towards this end, we assume that we are given access to an algorithm - $\mA$ for solving problems of the following structure:
\begin{eqnarray}\label{eq:pcaOptProblem} 
\min_{\z\in\mathbb{R}^d}\lbrace{F_{\w,\lambda}(\z) := \frac{1}{2}\z^{\top}(\lambda\I-\X)\z - \w^{\top}\z}\rbrace,
\end{eqnarray}
where $\X$ is positive definite, $\lambda > \lambda_1(\X)$ and $\w$ is some vector. Note that under these conditions, the function $F_{\w,\lambda}(\z)$ is strongly convex. Note also that the minimizer of $F_{\w,\lambda}(\z)$ - $\z^*$ is given by $\z^*=(\lambda\I-\X)^{-1}\w$, and thus solving Problem \eqref{eq:pcaOptProblem} is equivalent to computing a product between a vector and an inverse matrix.

There are a few issues with our approach that require delicate care however: 1) we need to pay close attention that the convex optimization problems are well-conditioned and  2) since now we use a numerical procedure to compute the matrix-vector products, we have approximation errors that we need to consider.


\begin{algorithm}
\caption{\textsc{Leading Eigenvector via Convex Optimization}}
\label{alg:convexEV}
\begin{algorithmic}[1]
\STATE Input: matrix $\X\in\mathbb{R}^{n\times n}$ such that $\X\succeq 0$, $\lambda_1(\X) \leq 1$, an estimate $\hat{\delta}$ for $\delta(\X)$,  accuracy parameter $\epsilon\in(0,1)$, failure probability parameters $p$
\STATE $\lambda_{(0)} \gets 1+ \hat{\delta}$
\STATE Set: $m_1 \gets \TpmCrude(1/8, p), \quad m_2 \gets \TpmAcc(3, \epsilon/2, p)$
\STATE Set: $\tilde{\epsilon} \gets \min\{\frac{1}{16}\left({\frac{\hat{\delta}}{8}}\right)^{m_1+1}, \frac{\epsilon}{4}\left({\frac{\hat{\delta}}{8}}\right)^{m_2+1}\} $
\STATE Let $\hat{\w}_0$ be a random unit vector
\STATE $s \gets 0$
\REPEAT
\STATE $s \gets s+1$
\STATE Let $\M_s = (\lambda_{(s-1)}\I-\X)$
\FOR{$t=1...m_1$}
\STATE Apply Algorithm $\mA$ to find a vector $\hat{\w}_t$ such that $\Vert{\hat{\w}_t - \M_s^{-1}\hat{\w}_{t-1}}\Vert \leq \tilde{\epsilon}$
\ENDFOR
\STATE $\w_s \gets \frac{\hat{\w}_{m_1}}{\Vert{\hat{\w}_{m_1}}\Vert}$
\STATE Apply Algorithm $\mA$ to find a vector $\v_s$ such that $\Vert{\v_s - \M_s^{-1}\w_s}\Vert \leq \tilde{\epsilon}$
\STATE $\Delta_s \gets \frac{1}{2}\cdot\frac{1}{ \w_s^{\top}\v_s - \tilde{\epsilon}}$
\STATE $\lambda_{(s)} \gets \lambda_{(s-1)} - \frac{\Delta_s}{2}$
\UNTIL{$\Delta_s \leq \hat{\delta}$}
\STATE $\lambda_{(f)} \gets \lambda_{(s)}$
\STATE Let $\M_f = (\lambda_{(f)}\I-\X)$
\FOR{$t=1...m_2$}
\STATE Apply Algorithm $\mA$ to find a vector $\hat{\w}_t$ such that $\Vert{\hat{\w}_t - \M_f^{-1}\hat{\w}_{t-1}}\Vert \leq \tilde{\epsilon}$
\ENDFOR
\RETURN $\w_f \gets \frac{\hat{\w}_{m_2}}{\Vert{\hat{\w}_{m_2}}\Vert}$
\end{algorithmic}
\end{algorithm}

\subsection{Power Method with inaccurate updates}

In this section we analyze the potential convergence of the Power Method with inaccurate matrix-vector products.

\begin{lemma}[Power Method with inaccurate matrix-vector products]\label{lem:approxProduct}
Let $\M$ be a positive definite matrix with largest eigenvalue $\lambda_1$ and smallest eigenvalue $\lambda_d$. Fix an accuracy parameter $\epsilon > 0$. Let $\w$ be an arbitrary unit vector and consider the following sequences  of vectors $\{\w_t^*\}_{t=0}^{\infty}, \{\hat{\w}_t^*\}_{t=0}^{\infty}$ and $\{\w_t\}_{t=0}^{\infty}, \{\hat{\w}_t\}_{t=0}^{\infty}$ defined as follows: 
\begin{eqnarray*}
&&\w_0^* = \hat{\w}_0^* = \w, \quad \forall t\geq 1: \, \hat{\w}_t^* \gets \M\hat{\w}_{t-1}^* , \quad \w_t^* \gets \frac{\hat{\w}_t^*}{\Vert{\hat{\w}_t^*}\Vert},\\
&&\w_0 = \hat{\w}_0 = \w , \quad \forall t\geq 1: \,  \textrm{$\hat{\w}_t$ satisfies that }  \Vert{\hat{\w}_t - \M\hat{\w}_{t-1}}\Vert \leq \epsilon, \quad \w_t \gets \frac{\hat{\w}_t}{\Vert{\hat{\w}_t}\Vert}.
\end{eqnarray*}
Denote:
\begin{eqnarray*}
\Gamma(\M,t) :=  \frac{2}{\lambda_d^t}\left\{ \begin{array}{ll}
         t & \mbox{if $\lambda_1 = 1$}\\
        \frac{\lambda_1^t-1}{\lambda_1-1} & \mbox{if $\lambda_1 \neq 1$}\end{array} \right. ,
        \qquad
\hat{\Gamma}(\M,t) := \left\{ \begin{array}{ll}
         t & \mbox{if $\lambda_1 = 1$}\\
        \frac{\lambda_1^t-1}{\lambda_1-1} & \mbox{if $\lambda_1 \neq 1$} \end{array} \right.   .     
\end{eqnarray*}
Then it holds that for all $t\geq 0$, 
\begin{eqnarray*}
\Vert{\hat{\w}_t - \hat{\w}_t^*}\Vert \leq \epsilon\cdot\hat{\Gamma}(\M,t)
\end{eqnarray*}
and
\begin{eqnarray*}
\Vert{\w_t - \w_t^*}\Vert \leq \epsilon\cdot\Gamma(\M,t)
\end{eqnarray*}
\end{lemma}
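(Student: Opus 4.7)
The plan is to prove the two bounds in order: first the bound on the unnormalized iterates $\|\hat{\w}_t - \hat{\w}_t^*\|$ by a one-step error recursion combined with induction on $t$, and then derive the normalized bound on $\|\w_t - \w_t^*\|$ as a consequence, using a standard comparison between the unit-vector quotients of two nonzero vectors.

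For the first bound, I would introduce the per-step error $\eta_t := \hat{\w}_t - \M \hat{\w}_{t-1}$, which by hypothesis satisfies $\|\eta_t\| \leq \epsilon$, and observe the error decomposition
\[
\hat{\w}_t - \hat{\w}_t^* = \M(\hat{\w}_{t-1} - \hat{\w}_{t-1}^*) + \eta_t.
\]
Since $\M$ is positive definite with largest eigenvalue $\lambda_1$, its operator norm equals $\lambda_1$, so denoting $e_t := \|\hat{\w}_t - \hat{\w}_t^*\|$ the triangle inequality gives the scalar recursion $e_t \leq \lambda_1\, e_{t-1} + \epsilon$ with $e_0 = 0$. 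Unrolling yields $e_t \leq \epsilon \sum_{i=0}^{t-1} \lambda_1^i$, which equals $\epsilon\,\hat{\Gamma}(\M,t)$ in both cases $\lambda_1 = 1$ and $\lambda_1 \neq 1$.

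For the normalized bound, I would combine two ingredients. First, the elementary inequality
\[
\left\|\tfrac{a}{\|a\|} - \tfrac{b}{\|b\|}\right\| \leq \tfrac{2\,\|a-b\|}{\|b\|},
\]
which follows from rewriting the numerator of $a/\|a\| - b/\|b\|$ as $(a-b)\|b\| + b(\|b\| - \|a\|)$ and invoking $|\|b\| - \|a\|| \leq \|a-b\|$. Second, the lower bound $\|\hat{\w}_t^*\| = \|\M^t \w\| \geq \lambda_d^t$, obtained by expanding the unit vector $\w$ in the eigenbasis of $\M$ and bounding each $\lambda_i^{2t}$ below by $\lambda_d^{2t}$. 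Setting $a = \hat{\w}_t$ and $b = \hat{\w}_t^*$ and plugging in the unnormalized bound then gives $\|\w_t - \w_t^*\| \leq 2\epsilon\,\hat{\Gamma}(\M,t)/\lambda_d^t = \epsilon\,\Gamma(\M,t)$, matching the definition in the statement.

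The only subtlety worth flagging is the choice to normalize relative to the \emph{exact} iterate $\hat{\w}_t^*$ rather than the approximate one: this is what lets us apply the clean $\lambda_d^t$ lower bound coming from positive definiteness, and is exactly what the form of the unit-vector comparison inequality that divides by $\|b\|$ delivers. Everything else reduces to routine induction plus recognizing the geometric sum $\sum_{i=0}^{t-1}\lambda_1^i$ in the two cases encoded by $\hat{\Gamma}$.
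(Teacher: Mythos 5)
Your proof is correct and essentially identical to the paper's: it uses the same one-step recursion $e_t \le \lambda_1 e_{t-1} + \epsilon$ for the unnormalized error (you unroll it directly as the geometric sum $\epsilon\sum_{i=0}^{t-1}\lambda_1^i$, the paper via a small telescoping manipulation), followed by the same normalized-vector comparison $\Vert{\w_t - \w_t^*}\Vert \le 2\Vert{\hat{\w}_t - \hat{\w}_t^*}\Vert / \Vert{\hat{\w}_t^*}\Vert$ combined with the lower bound $\Vert{\hat{\w}_t^*}\Vert = \Vert{\M^t\w}\Vert \ge \lambda_d^t$. One small nit: the decomposition $(a-b)\Vert b\Vert + b(\Vert b\Vert - \Vert a\Vert)$ you quote actually yields the bound with $\Vert a\Vert$ in the denominator, whereas the $\Vert b\Vert$-denominator version you rely on (and correctly need, so as to divide by the exact iterate) follows from the mirrored splitting $a(\Vert b\Vert - \Vert a\Vert) + (a-b)\Vert a\Vert$, or simply by symmetry of the left-hand side in $a$ and $b$ — this is exactly the splitting the paper performs.
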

\begin{proof}
First, observe that:
\begin{eqnarray*}
\Vert{\hat{\w}_{t+1} - \hat{\w}_{t+1}^*}\Vert &=& \Vert{\hat{\w}_{t+1} - \M\hat{\w}_t + \M\hat{\w}_t -\hat{\w}_{t+1}^*}\Vert
\leq \Vert{\hat{\w}_{t+1} - \M\hat{\w}_t}\Vert + \Vert{\M\hat{\w}_t - \M\hat{\w}_t^*}\Vert \\
& \leq & \epsilon + \lambda_1\cdot\Vert{\hat{\w}_t - \hat{\w}_t^*}\Vert .
\end{eqnarray*}

In case $\lambda_1 = 1$, we clearly have that
\begin{eqnarray}\label{eq:6}
\Vert{\hat{\w}_{t+1} - \hat{\w}_{t+1}^*}\Vert \leq (t+1)\epsilon + \Vert{\hat{\w}_{0} - \hat{\w}_{0}^*}\Vert = (t+1)\epsilon .
\end{eqnarray}
Otherwise, in case $\lambda_1 \neq 1$, by a simple algebraic manipulation we have that
\begin{eqnarray*}
\Vert{\hat{\w}_{t+1} - \hat{\w}_{t+1}^*}\Vert + \frac{\epsilon}{\lambda_1-1}
& \leq &  \lambda_1\cdot\Vert{\hat{\w}_t - \hat{\w}_t^*}\Vert + \frac{\epsilon\lambda_1}{\lambda_1-1}
= \lambda_1\left({\Vert{\hat{\w}_t - \hat{\w}_t^*}\Vert + \frac{\epsilon}{\lambda_1-1}}\right).
\end{eqnarray*}

It thus follows that for all $t\geq 0$,
\begin{eqnarray}\label{eq:1}
\Vert{\hat{\w}_{t+1} - \hat{\w}_{t+1}^*}\Vert \leq \lambda_1^{t+1}\left({\Vert{\hat{\w}_0 - \hat{\w}_0^*}\Vert + \frac{\epsilon}{\lambda_1-1}}\right) - \frac{\epsilon}{\lambda_1-1} 
= \frac{\epsilon}{\lambda_1-1}(\lambda_1^{t+1} - 1).
\end{eqnarray}

We now have that for all $t\geq 1$ it holds that
\begin{eqnarray*}
\Vert{\w_t - \w_t^*}\Vert &=& \Vert{\frac{\hat{\w}_t}{\Vert{\hat{\w}_t}\Vert} - \frac{\hat{\w}_t^*}{\Vert{\hat{\w}_t^*}\Vert}}\Vert
=\Vert{\frac{\hat{\w}_t}{\Vert{\hat{\w}_t}\Vert} - \frac{\hat{\w}_t}{\Vert{\hat{\w}_t^*}\Vert} + \frac{\hat{\w}_t}{\Vert{\hat{\w}_t^*}\Vert} - \frac{\hat{\w}_t^*}{\Vert{\hat{\w}_t^*}\Vert}}\Vert \\
& \leq & \Vert{\frac{\hat{\w}_t}{\Vert{\hat{\w}_t^*}\Vert} - \frac{\hat{\w}_t^*}{\Vert{\hat{\w}_t^*}\Vert}}\Vert
+ \Vert{\frac{\hat{\w}_t}{\Vert{\hat{\w}_t}\Vert} - \frac{\hat{\w}_t}{\Vert{\hat{\w}_t^*}\Vert}}\Vert \\
&= & \frac{1}{\Vert{\hat{\w}_t^*}\Vert}\cdot\Vert{\hat{\w}_t - \hat{\w}_t^*}\Vert + \Vert{\hat{\w}_t}\Vert \cdot \vert{\frac{1}{\Vert{\hat{\w}_t}\Vert} - \frac{1}{\Vert{\hat{\w}_t^*}\Vert}}\vert \\
& = &\frac{1}{\Vert{\hat{\w}_t^*}\Vert}\cdot\Vert{\hat{\w}_t - \hat{\w}_t^*}\Vert + \frac{\vert{\Vert{\hat{\w}_t^*}\Vert - \Vert{\hat{\w}_t}\Vert}\vert}{\Vert{\hat{\w}_t^*}\Vert} \leq \frac{2\Vert{\hat{\w}_t - \hat{\w}_t^*}\Vert}{\Vert{\hat{\w}_t^*}\Vert},
\end{eqnarray*}
where the last inequality follows from the triangle inequality.

Now, since $\hat{\w}_t^* = \M^t\w$ and $\M \succeq \lambda_{d}\I$, it follows that 
\begin{eqnarray*}
\Vert{\hat{\w}_t^*}\Vert \geq \lambda_d^t\cdot\Vert{\w}\Vert = \lambda_d^t .
\end{eqnarray*}

Combining this with Eq. \eqref{eq:1} and \eqref{eq:6}, we have that for all $t\geq 1$,

\begin{eqnarray*}
\Vert{\w_t - \w_t^*}\Vert \leq \frac{2\epsilon}{\lambda_d^t}
 \left\{ \begin{array}{ll}
         t & \mbox{if $\lambda_1 = 1$};\\
        \frac{\lambda_1^t-1}{\lambda_1-1} & \mbox{if $\lambda_1 \neq 1$}.\end{array} \right.
\end{eqnarray*}

Thus the lemma follows.
\end{proof}

The following corollary is a consequence of the convergence result for the Power Method (Theorem \ref{thm:pm}) and  Lemma \ref{lem:approxProduct}.

\begin{theorem}[Convergence of the Power Method with inaccurate updates]\label{thm:approxPM}
Fix $\epsilon > 0$ and $p > 0$. Let $\M$ be a positive definite matrix with largest eigevalue $\lambda_1$ and eigengap $\delta > 0$. Consider a sequence of vectors $\lbrace{\w_t}\rbrace_{t=0}^{\tau}$ where $\w_0$ is a random unit vector, and for all $t\in[\tau]$ it holds that $\Vert{\w_t - \M\w_{t-1}}\Vert\leq \tilde{\epsilon} := \frac{\epsilon}{4\Gamma(\M,\tau)}$ (see Lemma \ref{lem:approxProduct} for definition of $\Gamma(\M,\tau)$). Then the following two guarantees hold:
\begin{enumerate}
\item  If $\tau \geq \TpmCrude(\epsilon/2,p)$ then with probability at least $1-p$ it holds that
\begin{eqnarray*}
\w_{\tau}^{\top}\M\w_{\tau} \geq (1-\epsilon)\lambda_1 .
\end{eqnarray*}

\item  If $\tau \geq \TpmAcc(\kappa(\M),\epsilon/2,p)$then with probability at least $1-p$ it holds that
\begin{eqnarray*}
(\w_{\tau}^{\top}\u_1)^2 \geq 1-\epsilon,
\end{eqnarray*}
where $\u_1$ is the largest eigenvector of $\M$.
\end{enumerate}
In both cases, the probability of success depends only on the random variable $(\w_0^{\top}\u_1)^2$.
\end{theorem}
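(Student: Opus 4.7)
The plan is to compare the inaccurate iteration against the exact Power Method run started from the same initial unit vector $\w_0$, invoking Theorem \ref{thm:pm} on the exact sequence and Lemma \ref{lem:approxProduct} to control the deviation between the two.

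Concretely, I would let $\{\w_t^*\}$ denote the (normalized) exact Power Method iterates of Algorithm \ref{alg:powerm} on $\M$ initialized at $\w_0$, and compare them to the normalized version of the approximate sequence from the theorem, identifying the hypothesis $\|\w_t-\M\w_{t-1}\|\leq\tilde\epsilon$ with the condition on $\hat\w_t$ in Lemma \ref{lem:approxProduct}. By Theorem \ref{thm:pm} applied with tolerance $\epsilon/2$, with probability at least $1-p$ (depending only on $(\w_0^\top\u_1)^2$) the exact iterates satisfy $(\w_\tau^*)^\top\M\w_\tau^* \geq (1-\epsilon/2)\lambda_1$ when $\tau \geq \TpmCrude(\epsilon/2,p)$, and $((\w_\tau^*)^\top\u_1)^2 \geq 1-\epsilon/2$ when $\tau \geq \TpmAcc(\kappa(\M),\epsilon/2,p)$. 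Invoking Lemma \ref{lem:approxProduct} with the prescribed error $\tilde\epsilon = \epsilon/(4\Gamma(\M,\tau))$ then yields $\|\w_\tau - \w_\tau^*\| \leq \tilde\epsilon\cdot\Gamma(\M,\tau) = \epsilon/4$ for the normalized iterates.

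With this deviation bound in hand, both conclusions reduce to elementary perturbation. For the crude regime, since $\w_\tau,\w_\tau^*$ are unit vectors and the quadratic form $\x\mapsto\x^\top\M\x$ is $2\lambda_1$-Lipschitz on the unit ball,
\[
\w_\tau^\top\M\w_\tau \;\geq\; (\w_\tau^*)^\top\M\w_\tau^* - 2\lambda_1\|\w_\tau-\w_\tau^*\| \;\geq\; (1-\epsilon/2)\lambda_1 - \lambda_1\epsilon/2 \;=\; (1-\epsilon)\lambda_1.
\]
For the accurate regime, the reverse triangle inequality gives $|\w_\tau^\top\u_1| \geq |(\w_\tau^*)^\top\u_1| - \|\w_\tau-\w_\tau^*\| \geq \sqrt{1-\epsilon/2} - \epsilon/4$, and squaring (using $\sqrt{1-\epsilon/2}\leq 1$) gives $(\w_\tau^\top\u_1)^2 \geq 1-\epsilon/2 - \epsilon\sqrt{1-\epsilon/2}/2 + \epsilon^2/16 \geq 1-\epsilon$.

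The substantive content is already encapsulated by Lemma \ref{lem:approxProduct}: its factor $\Gamma(\M,\tau)$ is precisely the amplification of per-step error across $\tau$ Power Method steps, and the calibration $\tilde\epsilon = \epsilon/(4\Gamma(\M,\tau))$ is tuned so that this amplification leaves a deviation of only $\epsilon/4$, small enough for the perturbation inequalities above to close the gap from the $\epsilon/2$ guarantee on the exact iterates down to the desired $\epsilon$ on the approximate ones. The only subtle bookkeeping point is that the hypothesis bounds the error at the pre-normalization step $\|\w_t-\M\w_{t-1}\|$, whereas the two conclusions concern unit vectors, so one must invoke Lemma \ref{lem:approxProduct} to translate the per-step pre-normalization error into the deviation bound for the normalized iterates that actually enter the conclusions.
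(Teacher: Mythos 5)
Your proposal is correct and follows essentially the same route as the paper's proof: couple the inaccurate iterates to the exact Power Method run from the same $\w_0$, apply Theorem \ref{thm:pm} with tolerance $\epsilon/2$, use Lemma \ref{lem:approxProduct} with $\tilde\epsilon=\epsilon/(4\Gamma(\M,\tau))$ to get $\Vert\w_\tau-\w_\tau^*\Vert\leq\epsilon/4$, and finish with elementary perturbation bounds (the paper bounds the quadratic-form deviation by $2\lambda_1\Vert\w_\tau-\w_\tau^*\Vert$ via convexity, matching your Lipschitz bound, and for the accurate regime expands the square directly rather than via the reverse triangle inequality, which is only a cosmetic difference). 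No gaps.
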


\begin{proof}
Let $\{\w_t^*\}_{t=0}^{\infty}$ be a sequence of unit vectors as defined in Lemma \ref{lem:approxProduct}.
For the first item, note that
\begin{eqnarray*}
\w_{\tau}^{\top}\M\w_{\tau} &=& \w_{\tau}^{*\top}\M\w_{\tau}^* + \left({\w_{\tau}^{\top}\M\w_{\tau} - \w_{\tau}^{*\top}\M\w_{\tau}^*}\right) .
\end{eqnarray*}

Since $\M$ is positive definite, the function $f(\w) = \w^{\top}\M\w$ is convex and we have that
\begin{eqnarray*}
\vert{\w_{\tau}^{\top}\M\w_{\tau} - \w_{\tau}^{*\top}\M\w_{\tau}^*}\vert
&\leq & 2\max\{(\M\w_{\tau})^{\top}(\w_{\tau} - \w_{\tau}^*),\, (\M\w_{\tau}^*)^{\top}(\w_{\tau}^*-\w_{\tau})\} \\
&\leq & 2\lambda_1(\M)\cdot\Vert{\w_{\tau} - \w_{\tau}^*}\Vert .
\end{eqnarray*}

Thus we have that
\begin{eqnarray*}
\w_{\tau}^{\top}\M\w_{\tau} &\geq & \w_{\tau}^{*\top}\M\w_{\tau}^* - 2\lambda_1(\M)\cdot\Vert{\w_{\tau} - \w_{\tau}^*}\Vert \\
&\geq & \w_{\tau}^{*\top}\M\w_{\tau}^* - \frac{\epsilon}{2}\lambda_1(\M), 
\end{eqnarray*}
where the last inequality follows from Lemma \ref{lem:approxProduct} and our choice of $\tilde{\epsilon}$.

Now, by our choice of $\tau$ and Theorem \ref{thm:pm}, we have that with probability at least $1-p$ it holds that
\begin{eqnarray*}
\w_{\tau}^{\top}\M\w_{\tau}  &\geq & (1-\epsilon/2)\lambda_1(\M) -\frac{\epsilon}{2}\lambda_1(\M) = (1-\epsilon)\lambda_1(\M).
\end{eqnarray*}

For the second item, note that
\begin{eqnarray*}
(\w_{\tau}^{\top}\u_1)^2 &=& (\w_{\tau}^{*\top}\u_1 + (\w_{\tau} - \w_{\tau}^{*\top})^{\top}\u_1)^2
\geq (\w_{\tau}^{*\top}\u_1)^2 - 2\Vert{\w_{\tau} - \w_{\tau}^*}\Vert \\
& \geq & (\w_{\tau}^{*\top}\u_1)^2 - \frac{\epsilon}{2} ,
\end{eqnarray*}
where the last inequality follows again from Lemma \ref{lem:approxProduct} and our choice of $\tilde{\epsilon}$.

Again, by our definition of $\tau$ and Theorem \ref{thm:pm}, we have that with probability at least $1-p$ it holds that
\begin{eqnarray*}
(\w_{\tau}^{\top}\u_1)^2 \geq 1- \epsilon/2 - \epsilon/2 = 1-\epsilon .
\end{eqnarray*}
\end{proof}

\subsection{Convergence of Algorithm \ref{alg:convexEV}}

The key step in the analysis of Algorithm \ref{alg:convexEV} is to show that if all numerical computations are carried out with sufficiently small error, then Algorithm \ref{alg:convexEV} successfully simulates the Power Method-based Algorithm \ref{alg:shrinkInvPM}. A main ingredient in Algorithm \ref{alg:shrinkInvPM} is the computations of the values $\Delta_s$ which are used in turn to update the estimates $\lambda_{(s)}$. Our use of the values $\Delta_s$ in Algorithm \ref{alg:shrinkInvPM} was through the approximation guarantees they provided for the gap $\lambda_{(s-1)}-\lambda_1$ (recall we have seen that $\frac{1}{2}(\lambda_{(s-1)}-\lambda_1) \leq \Delta_s \leq \lambda_{(s-1)} - \lambda_1$). The following lemma shows that with a correct choice for the numerical error parameter $\tilde{\epsilon}$, these guarantees also hold for the values $\Delta_s$ computed in Algorithm \ref{alg:convexEV}.

\begin{lemma}\label{lem:algEquiv}
Suppose that $\hat{\delta} \in\left[{\frac{\delta}{2}, \frac{3\delta}{4}}\right]$ and that $\tilde{\epsilon} \leq \frac{1}{16}\left({\frac{\hat{\delta}}{8}}\right)^{m_1+1}$, where $m_1$ is as defined in Algorithm \ref{alg:convexEV}. Then with probability at least $1-p$ it holds that for any $s$, the update of the variables $\Delta_s, \lambda_{(s)}$ in Algorithm \ref{alg:convexEV} is equivalent to that in Algorithm \ref{alg:shrinkInvPM}. In particular, for all $s\geq 1$ it holds that
\begin{eqnarray*}
\frac{1}{2}(\lambda_{(s-1)}-\lambda_1) \leq \Delta_s \leq \lambda_{(s-1)}-\lambda_1 .
\end{eqnarray*}
\end{lemma}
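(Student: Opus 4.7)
The plan is induction on $s$, showing that, conditioned on a single good event of probability at least $1-p$, the entire sequence $\lambda_{(0)},\lambda_{(1)},\ldots$ produced by Algorithm \ref{alg:convexEV} coincides with that of Algorithm \ref{alg:shrinkInvPM}, and moreover that each $\Delta_s$ satisfies the Lemma \ref{lem:invapprox}-style sandwich bound. The base case $\lambda_{(0)}=1+\hat{\delta}$ is immediate. For the inductive step I assume $\lambda_{(0)},\ldots,\lambda_{(s-1)}$ match the corresponding values of Algorithm \ref{alg:shrinkInvPM}, and argue about the freshly computed $\Delta_s$ and $\lambda_{(s)}$.

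First I would apply Theorem \ref{thm:approxPM} (crude regime) to the inner \textsc{for} loop producing $\w_s$. Since $m_1=\TpmCrude(1/8,p)$, the theorem with target accuracy $1/4$ requires each per-step error to be at most $\tfrac{1}{16\,\Gamma(\M_s^{-1},m_1)}$. To see that the stated $\tilde{\epsilon}\leq \tfrac{1}{16}(\hat{\delta}/8)^{m_1+1}$ is sufficient, I would use the loop invariants inherited (by the inductive hypothesis) from Algorithm \ref{alg:shrinkInvPM}: namely $\lambda_{(s-1)}-\lambda_1\in[\hat{\delta}/4,\,2]$ throughout the loop. The lower bound follows because the loop has not yet terminated, so $\Delta_{s-1}>\hat{\delta}$, combined with $\lambda_{(s-1)}-\lambda_1\geq \tfrac12(\lambda_{(s-2)}-\lambda_1)$; the upper bound from $\lambda_{(0)}-\lambda_1\leq 1+\hat{\delta}\leq 2$. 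Hence $\lambda_d(\M_s^{-1})\geq 1/2$ and $\lambda_1(\M_s^{-1})\leq 4/\hat{\delta}$, giving $\Gamma(\M_s^{-1},m_1)=O((8/\hat{\delta})^{m_1})$, which matches the lemma's choice of $\tilde{\epsilon}$. Theorem \ref{thm:approxPM} then yields $\w_s^{\top}\M_s^{-1}\w_s\geq \tfrac{3}{4}\lambda_1(\M_s^{-1})$.

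Next I would propagate this guarantee through the approximation of the matrix-vector product $\v_s\approx \M_s^{-1}\w_s$. By Cauchy--Schwarz, $|\w_s^{\top}\v_s-\w_s^{\top}\M_s^{-1}\w_s|\leq \tilde{\epsilon}$, and therefore
$$\tfrac{3}{4}\lambda_1(\M_s^{-1})-2\tilde{\epsilon}\;\leq\;\w_s^{\top}\v_s-\tilde{\epsilon}\;\leq\;\w_s^{\top}\M_s^{-1}\w_s\;\leq\;\lambda_1(\M_s^{-1}).$$
Since $\lambda_1(\M_s^{-1})\geq 1/2$ and $\tilde{\epsilon}\leq 1/16$, the leftmost quantity is at least $\tfrac{1}{2}\lambda_1(\M_s^{-1})>0$. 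Taking reciprocals and halving then gives the required sandwich
$$\tfrac{1}{2}(\lambda_{(s-1)}-\lambda_1)\;\leq\;\Delta_s\;\leq\;\lambda_{(s-1)}-\lambda_1,$$
so that $\lambda_{(s)}=\lambda_{(s-1)}-\Delta_s/2$ is updated identically to Algorithm \ref{alg:shrinkInvPM}, closing the induction.

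Finally, for the probability statement I would reuse the argument from the proof of Theorem \ref{thm:basicAlgConv}: all matrices $\M_s^{-1}$ share the same leading eigenvector $\u$, and all approximate Power Method invocations are driven by the single random starting vector $\hat{\w}_0$, so every invocation's success is controlled by the same event (large enough $(\hat{\w}_0^{\top}\u)^2$), which occurs with probability at least $1-p$. The subtle point, and the main obstacle, is the mild circularity in step two: bounding $\Gamma(\M_s^{-1},m_1)$ needs the lower bound $\lambda_{(s-1)}-\lambda_1\geq \hat{\delta}/4$, which is a consequence of the sandwich we are in the process of proving. This is resolved because the \emph{upper} half of the sandwich (i.e.\ $\Delta_{s-1}\leq \lambda_{(s-2)}-\lambda_1$) plus the not-yet-terminated condition $\Delta_{s-1}>\hat{\delta}$ already give the needed lower bound at step $s-1$ without invoking the sharper inductive claim, so the induction is well-founded.
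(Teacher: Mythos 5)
Your proposal follows essentially the same route as the paper's proof: induction on $s$ conditioned on a single success event tied to $\hat{\w}_0$, Theorem \ref{thm:approxPM} (crude regime) with $m_1$ to get $\w_s^{\top}\M_s^{-1}\w_s \geq \tfrac{3}{4}\lambda_1(\M_s^{-1})$, Cauchy--Schwarz to place $\w_s^{\top}\v_s-\tilde{\epsilon}$ in $[\lambda_1(\M_s^{-1})/2,\lambda_1(\M_s^{-1})]$, and a bound on $\Gamma(\M_s^{-1},m_1)$ via $\lambda_{(s-1)}-\lambda_1=\Omega(\hat{\delta})$; your derivation of that lower bound from the non-termination condition plus the step-$(s-1)$ sandwich is a legitimate (and arguably cleaner) substitute for the paper's appeal to monotonicity and Lemma \ref{lem:finalGapBound}, and it resolves the circularity exactly as the paper implicitly does. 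One small underjustified point: from $\lambda_d(\M_s^{-1})\geq 1/2$ and $\lambda_1(\M_s^{-1})\leq 4/\hat{\delta}$ alone, the claim $\Gamma(\M_s^{-1},m_1)\leq (8/\hat{\delta})^{m_1+1}$ does not immediately follow, because $\Gamma$ contains the factor $\bigl(\lambda_1(\M_s^{-1})-1\bigr)^{-1}$ (or a factor $m_1$ when $\lambda_1(\M_s^{-1})$ is near $1$); the paper handles this by showing $\lambda_1(\M_s^{-1})\geq 1+\hat{\delta}/3$, using $\lambda_1(\X)\geq\delta\geq \tfrac{4}{3}\hat{\delta}$ so that $\lambda_{(s-1)}-\lambda_1\leq 1-\hat{\delta}/3$, and your argument should either add this observation or a short two-case analysis around $\lambda_1(\M_s^{-1})\approx 1$.
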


\begin{proof}
For clarity we refer by $\Delta_s, \lambda_{(s)}$ to the values computed in Algorithm \ref{alg:convexEV} and by $\tilde{\Delta}_s, \tilde{\lambda}_{(s)}$ to the corresponding values computed in Algorithm \ref{alg:shrinkInvPM} from Section \ref{sec:PowerMethodBasedAlg}.
The proof of the lemma is by induction on $s$. For the base case $s=0$, clearly $\lambda_{(0)}=\tilde{\lambda}_{(0)}$ and both $\Delta_0, \tilde{\Delta}_0$ are undefined.
Consider now some $s \geq 1$.

Suppose for now that 
\begin{eqnarray}\label{eq:numericEpsilon}
\tilde{\epsilon} \leq \min\{\frac{1}{16\Gamma(\M_s^{-1},m_1)}, \frac{\lambda_1(\M_s^{-1})}{8}\} . 
\end{eqnarray}
Then it follows from 
Theorem \ref{thm:approxPM} and our choice of $m_1$ that with probability at least $1-p$,
\begin{eqnarray}\label{eq:4}
\w_s^{\top}\M_s^{-1}\w_s \geq \frac{3}{4}\lambda_1(\M_s^{-1}).
\end{eqnarray}

By the definition of the vector $\v_s$ in Algorithm \ref{alg:convexEV} and the Cauchy-Schwartz inequality it holds that
\begin{eqnarray}\label{eq:5}
\w_s^{\top}\v_s = \w_s^{\top}\M_s^{-1}\w_s + \w_s^{\top}(\v_s - \M_s^{-1}\w_s) \in [\w_s^{\top}\M_s^{-1}\w_s -\tilde{\epsilon}, \w_s^{\top}\M_s^{-1}\w_s + \tilde{\epsilon}] .
\end{eqnarray}

Thus, combining Eq. \eqref{eq:4} and \eqref{eq:5}, we have that
\begin{eqnarray*}
\w_s^{\top}\v_s -\tilde{\epsilon}\in[\w_s^{\top}\M_s^{-1}\w_s -2\tilde{\epsilon}, \w_s^{\top}\M_s^{-1}\w_s] 
\subseteq [3\lambda_1(\M_s^{-1})/4 - 2\tilde{\epsilon}, \lambda_1(\M_s^{-1})] .
\end{eqnarray*}

By our choice of $\tilde{\epsilon}$ it follows that
\begin{eqnarray*}
\w_s^{\top}\v_s -\tilde{\epsilon} \in [\lambda_1(\M_s^{-1})/2, \lambda_1(\M_s^{-1})] .
\end{eqnarray*}

Thus, the computation of the value of $\Delta_s$, and as a result the computation of $\lambda_{(s)}$, is identical to that of $\tilde{\Delta}_s, \tilde{\lambda}_{(s)}$, and the claim follows.

It only remains to give an explicit bound on $\tilde{\epsilon}$, as defined in Eq. \eqref{eq:numericEpsilon}. For this we need to upper bound $\Gamma(\M_s^{-1},m_1)$ for all values of $s$.
Recall that from the results of Section \ref{sec:PowerMethodBasedAlg} it follows that the values $\lambda_{(s)}$ are monotonically non-increasing (see Eq. \eqref{eq:gapDec} in proof of Lemma \ref{lem:loopBound}) and lower-bounded by $\lambda_{(f)} \geq \lambda_1+\hat{\delta}/4$ (Lemma \ref{lem:finalGapBound}). By our assumption of $\hat{\delta}$ we have that
\begin{eqnarray}\label{eq:8}
\lambda_1(\M_s^{-1}) &=& \frac{1}{\lambda_{(s-1)} - \lambda_1} \geq \frac{1}{\lambda_{(0)}-\lambda_1} = \frac{1}{1+\hat{\delta}-\lambda_1} 
\geq \frac{1}{1+\frac{3\delta}{4}-\delta} \nonumber \\
&>& 1 + \frac{\delta}{4} \geq 1 + \frac{\hat{\delta}}{3}, 
\end{eqnarray}
where the second inequality holds since by definition of $\delta$ it follows that $\lambda_1 = \lambda_2 + \delta \geq \delta$.

Fix a natural number $t$. By the definition of $\Gamma(\M,t)$ (see Lemma \ref{lem:approxProduct}) we have that 
\begin{eqnarray}\label{eq:9}
\Gamma(\M_s^{-1},t) &=& \frac{2}{\lambda_d(\M_s^{-1})^t}\cdot\frac{\lambda_1(\M_s^{-1})^t-1}{\lambda_1(\M_s^{-1})-1}
= \frac{2}{\lambda_1(\M_s^{-1})-1}\left({\frac{\lambda_1(\M_s^{-1})}{\lambda_d(\M_s^{-1})}}\right)^t \nonumber \\
&\leq & \frac{6}{\hat{\delta}}\left({\frac{\lambda_{(s-1)}-\lambda_d}{\lambda_{(s-1)}-\lambda_1}}\right)^t
\leq \frac{6}{\hat{\delta}}\left({\frac{\lambda_{(0)}}{\lambda_{(f)}-\lambda_1}}\right)^t \nonumber \\
&\leq& \frac{6}{\hat{\delta}}\left({\frac{1+\hat{\delta}}{\hat{\delta}/4}}\right)^t = \frac{6}{\hat{\delta}}\left({4+\frac{4}{\hat{\delta}}}\right)^t < \left({\frac{8}{\hat{\delta}}}\right)^{t+1},
\end{eqnarray}
where the first inequality follows from Eq. \eqref{eq:8}, the second inequality follows from the bounds $\lambda_d \geq 0$ and $\lambda_{(f)} \leq \lambda_{(s-1)} \leq \lambda_{(0)}$, and the third inequality follows from plugging the value of $\lambda_{(0)}$ and from Lemma \ref{lem:finalGapBound}.

Thus, it follows that we can take $\tilde{\epsilon} \leq \min\{\frac{1}{16}\left({\frac{\hat{\delta}}{8}}\right)^{m_1+1}, \frac{1+\hat{\delta}/3}{8}\} = \frac{1}{16}\left({\frac{\hat{\delta}}{8}}\right)^{m_1+1}$. 

\end{proof}

\begin{theorem}[Convergence of Algorithm \ref{alg:convexEV}]\label{thm:convConvexPCA}
Suppose that $\hat{\delta}\in[\delta/2,3\delta/4]$. Fix $\epsilon > 0$ . Let $m_1 \geq \TpmCrude(1/8,p)$ and $m_2 \geq \TpmAcc(3, \epsilon/2,p)$.
Suppose that $\tilde{\epsilon}$, satisfies that
\begin{eqnarray*}
\tilde{\epsilon} \leq \min\{\frac{1}{16}\left({\frac{\hat{\delta}}{8}}\right)^{m_1+1}, \frac{\epsilon}{4}\left({\frac{\hat{\delta}}{8}}\right)^{m_2+1}\}.
\end{eqnarray*}
Then, with probability at least $1-p$ it holds that the output of Algorithm \ref{alg:convexEV}, the unit vector $\w_f$, satisfies that
\begin{eqnarray*}
(\w_f^{\top}\u_1)^2 \geq 1-\epsilon,
\end{eqnarray*}
and the total number of calls to the convex optimization oracle $\mA$ is 
\begin{eqnarray*}
O\left({\log(d/p)\log(\delta^{-1}) + \log(\frac{d}{p\epsilon})}\right) .
\end{eqnarray*}
\end{theorem}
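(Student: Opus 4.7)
The plan is to couple Algorithm \ref{alg:convexEV} to the exact Power Method-based Algorithm \ref{alg:shrinkInvPM} using Lemma \ref{lem:algEquiv}, inherit the structural bounds from Section \ref{sec:PowerMethodBasedAlg}, and then handle the final accurate Power Method phase by a separate application of Theorem \ref{thm:approxPM}.

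First I would observe that the first term of the hypothesis $\tilde{\epsilon} \leq \frac{1}{16}(\hat{\delta}/8)^{m_1+1}$ is precisely what Lemma \ref{lem:algEquiv} requires. Invoking that lemma, with probability at least $1-p$ the sequences $\{\Delta_s, \lambda_{(s)}\}$ computed by Algorithm \ref{alg:convexEV} agree with the corresponding sequences of Algorithm \ref{alg:shrinkInvPM}. Consequently Lemma \ref{lem:loopBound} bounds the number of loop iterations by $O(\log\delta^{-1})$, and Lemma \ref{lem:finalGapBound} gives $\lambda_{(f)} - \lambda_1 \leq 3\hat{\delta}/2 \leq 9\delta/8$. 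Combined with Lemma \ref{lem:inverseCond} applied with $a = 9/8$, this yields $\kappa(\M_f^{-1}) \leq 17/8 < 3$, justifying the choice $m_2 \geq \TpmAcc(3, \epsilon/2, p)$ in the final phase.

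Next, for the final phase, I would apply part 2 of Theorem \ref{thm:approxPM} to the inaccurate Power Method iteration on $\M_f^{-1}$. The theorem demands $\tilde{\epsilon} \leq \frac{\epsilon/2}{4\Gamma(\M_f^{-1}, m_2)}$, so it suffices to show $\Gamma(\M_f^{-1}, m_2) \leq (8/\hat{\delta})^{m_2+1}$. For this I would reuse Eq.~\eqref{eq:9} from the proof of Lemma \ref{lem:algEquiv}, whose derivation depended only on the bounds $\lambda_1(\M_s^{-1}) \geq 1 + \hat{\delta}/3$ and $\lambda_{(s-1)} \leq \lambda_{(0)} = 1 + \hat{\delta}$, both of which remain valid when $\M_s$ is replaced by $\M_f$ (the lower bound on $\lambda_1(\M_f^{-1})$ holds because $\lambda_{(f)} - \lambda_1 \leq \lambda_{(0)} - \lambda_1$). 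Under the second term of the hypothesis $\tilde{\epsilon} \leq (\epsilon/4)(\hat{\delta}/8)^{m_2+1}$, Theorem \ref{thm:approxPM} then yields $(\w_f^{\top}\u_1)^2 \geq 1-\epsilon$ in the final Power Method phase.

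For the success probability I would argue exactly as in the proof of Theorem \ref{thm:basicAlgConv}: all matrices $\M_s^{-1}$ and $\M_f^{-1}$ share the same top eigenvector $\u_1$, and the success of every Power Method invocation (cf.~Theorems \ref{thm:pm} and \ref{thm:approxPM}) depends only on the random variable $(\hat{\w}_0^{\top}\u_1)^2$ of the common initial random unit vector, so all invocations succeed jointly with probability at least $1-p$. Counting oracle calls is then immediate: each of the $O(\log\delta^{-1})$ loop iterations uses $m_1 + 1 = O(\log(d/p))$ calls to $\mA$, and the final phase uses $m_2 = O(\log(d/(p\epsilon)))$ calls, giving the claimed total $O(\log(d/p)\log(\delta^{-1}) + \log(d/(p\epsilon)))$. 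The main obstacle I foresee is verifying that one uniform choice of $\tilde{\epsilon}$ controls numerical error across two regimes with different tolerance requirements (the $m_1$ crude products in each loop iteration vs.\ the $m_2$ accurate products in the final phase); this is precisely what is unlocked by the fact that the geometric bound $\Gamma(\M, t) \leq (8/\hat{\delta})^{t+1}$ applies uniformly to both $\M_s^{-1}$ and $\M_f^{-1}$, so the two terms in the $\min$ defining $\tilde{\epsilon}$ are exactly what each phase requires.
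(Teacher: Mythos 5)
Your proposal is correct and follows essentially the same route as the paper's own proof: it couples Algorithm \ref{alg:convexEV} to Algorithm \ref{alg:shrinkInvPM} via Lemma \ref{lem:algEquiv}, uses Lemmas \ref{lem:loopBound}, \ref{lem:finalGapBound} and \ref{lem:inverseCond} to bound the loop count and $\kappa(\M_f^{-1})$, reuses the bound of Eq.~\eqref{eq:9} to control $\Gamma(\M_f^{-1},m_2)$, and invokes Theorem \ref{thm:approxPM} plus the shared-initial-vector argument for the joint success probability and the oracle-call count. The only quibble is a harmless misquote of a constant: Theorem \ref{thm:approxPM} (with target accuracy $\epsilon$ and $\tau\geq\TpmAcc(\kappa,\epsilon/2,p)$) only demands $\tilde{\epsilon}\leq\frac{\epsilon}{4\Gamma(\M_f^{-1},m_2)}$, not $\frac{\epsilon/2}{4\Gamma(\M_f^{-1},m_2)}$, and that weaker requirement is exactly what your chain $\tilde{\epsilon}\leq\frac{\epsilon}{4}\left(\frac{\hat{\delta}}{8}\right)^{m_2+1}$ together with $\Gamma(\M_f^{-1},m_2)\leq\left(\frac{8}{\hat{\delta}}\right)^{m_2+1}$ delivers.
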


\begin{proof}
First, note that as in the proof of Theorem \ref{thm:basicAlgConv}, since all the noisy simulations of the Power Method in Algorithm \ref{alg:convexEV} are initialized with the same random unit vector $\hat{\w}_0$, they all succeed together with probability at least $1-p$ (provided that the other parameters $m_1,m_2, \tilde{\epsilon}$ are set correctly).

By our choice of $\tilde{\epsilon}$ and Lemma \ref{lem:algEquiv} it follows that we can invoke the results of Section \ref{sec:PowerMethodBasedAlg}.
By Lemma \ref{lem:loopBound}, we can upper bound the number of iterations made by the \textit{repeat-until} loop by $O(\log(\delta^{-1})$. Since each iteration of the loop requires $m_1+1$ calls to the optimization oracle $\mA$, the overall number of calls to $\mA$ during the loop is $O(m_1\log(\delta^{-1}))$.

By Lemma \ref{lem:finalGapBound} we have that the final estimate $\lambda_{(f)}$ satisfies that $\lambda_{(f)}-\lambda_1 \leq \frac{3\hat{\delta}}{2} \leq \frac{9\delta}{8} < 2\delta$. Thus, by Lemma \ref{lem:inverseCond} we have that
\begin{eqnarray}\label{eq:7}
\kappa(\M_f^{-1}) = \frac{\lambda_1(\M_f^{-1})}{\delta(\M_f^{-1})} \leq 3.
\end{eqnarray}
Suppose now that $\tilde{\epsilon} \leq \frac{\epsilon}{4\Gamma(\M_f^{-1}, m_2)}$.
By our choice of $m_2$ and Eq. \eqref{eq:7}, it follows from Theorem \ref{thm:approxPM} that with probability at least $1-p$ indeed $(\w_f^{\top}\u)^2 \geq 1-\epsilon$. 

The number of calls to the oracle $\mA$ in this final part is $m_2$. Thus overall number of calls to $\mA$ in Algorithm \ref{alg:convexEV} is $O(m_1\log(\delta^{-1}) + m_2)$.

It remains to lower-bound the term $\frac{\epsilon}{4\Gamma(\M_f^{-1}, m_2)}$.
Following the analysis in the proof of Lemma \ref{lem:algEquiv} (Eq. \eqref{eq:9}), we can upper bound $\Gamma(\M_f^{-1}, m_2) \leq \left({\frac{8}{\hat{\delta}}}\right)^{m_2+1}$, which agrees with the bound on $\tilde{\epsilon}$ stated in the theorem.

\end{proof}

In order to analyze the arithmetic complexity of Algorithm \ref{alg:convexEV} using a specific implementation for the optimization oracle $\mA$, it is not only important to bound the number of calls to $\mA$ (as done in Theorem \ref{thm:convConvexPCA}), but to also bound important parameters of the optimization problem \eqref{eq:pcaOptProblem} that naturally arise when considering the arithmetic complexity of different implementations for $\mA$. For this issue we have the following lemma which follows directly from the discussion in Section \ref{sec:PowerMethodBasedAlg} and the assumptions stated in Theorem \ref{thm:convConvexPCA}.

\begin{lemma}\label{lem:convexProblemsBounds}
Let $\lambda,\w$ be such that during the run of Algorithm \ref{alg:convexEV}, the optimization oracle $\mA$ is applied to the minimization of the function
\begin{eqnarray*}
F_{\w,\lambda}(\z) = \frac{1}{2}\z^{\top}(\lambda\I-\X)\z - \w^{\top}\z.
\end{eqnarray*}
Then, under the conditions stated in Theorem \ref{thm:convConvexPCA} it holds that
\begin{enumerate}
\item $F_{\w,\lambda}(\z)$ is $(\lambda-\lambda_1(\X))=\Omega(\delta)$-strongly convex.
\item for all $i\in[n]$ it holds that the function $f_i(\z) = \frac{1}{2}\z^{\top}(\lambda\I-\x_i\x_i^{\top})\z - \w^{\top}\z$ is $1+\hat{\delta}=O(1)$-smooth.
\item $\log(\Vert{\z^*}\Vert) = \tilde{O}(1)$, where $\z^*$ is the global minimizer of $F_{\w,\lambda}(\z)$.
\end{enumerate}
\end{lemma}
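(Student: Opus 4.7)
The plan is to verify each of the three claims by pinning down precisely which values of $\lambda$ and which right-hand sides $\w$ are actually passed to the oracle $\mA$ during the run of Algorithm \ref{alg:convexEV}. Under the hypotheses of Theorem \ref{thm:convConvexPCA}, Lemma \ref{lem:algEquiv} guarantees that the iterates $\lambda_{(s)}$ produced by Algorithm \ref{alg:convexEV} coincide (with high probability) with those of the power-method-based Algorithm \ref{alg:shrinkInvPM}, so the structural results of Section \ref{sec:PowerMethodBasedAlg} may be invoked freely.

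For the strong convexity of $F_{\w,\lambda}$, the Hessian is $\lambda\I-\X$, whose smallest eigenvalue is $\lambda-\lambda_1(\X)$, so Lemma \ref{lem:quadFuncProp} gives strong convexity parameter exactly $\lambda-\lambda_1(\X)$. Every $\lambda$ passed to $\mA$ is either some $\lambda_{(s-1)}$ (inside the repeat-until loop) or the final $\lambda_{(f)}$. Using the monotonicity $\lambda_{(s+1)}\geq(\lambda_{(s)}+\lambda_1)/2$ established in Lemma \ref{lem:postivieGap} together with the lower bound $\lambda_{(f)}\geq\lambda_1+\hat{\delta}/4$ from Lemma \ref{lem:finalGapBound}, I conclude $\lambda-\lambda_1(\X)\geq\hat{\delta}/4=\Omega(\delta)$ uniformly. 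For the smoothness of $f_i$, the Hessian $\lambda\I-\x_i\x_i^{\top}$ has spectrum $\{\lambda,\lambda-\|\x_i\|^2\}$ (multiplicity $d-1$ along $\x_i^{\perp}$ and $1$ along $\x_i$), hence operator norm at most $\max(\lambda,\|\x_i\|^2)\leq\max(1+\hat{\delta},1)=1+\hat{\delta}$; the second clause of Lemma \ref{lem:quadFuncProp} (needed because this Hessian may fail to be PSD when $\lambda<\|\x_i\|^2$) then gives smoothness at most $1+\hat{\delta}=O(1)$.

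For the logarithmic norm bound on $\z^{*}=(\lambda\I-\X)^{-1}\w$, I split
$\log\|\z^{*}\|\leq\log\|\w\|+\log\|(\lambda\I-\X)^{-1}\|$. The second term is at most $\log(1/(\lambda-\lambda_1(\X)))=O(\log(1/\delta))=\tilde{O}(1)$ by part (1). For the first term, every right-hand side $\w$ passed to $\mA$ is either a unit vector (the normalized $\w_s$, or $\hat{\w}_0$) or one of the unnormalized power-method iterates $\hat{\w}_{t-1}$ with $t\leq\max(m_1,m_2)=\tilde{O}(1)$. A one-line induction using $\|\hat{\w}_t\|\leq\|\M^{-1}\|\cdot\|\hat{\w}_{t-1}\|+\tilde{\epsilon}$ together with $\|\M^{-1}\|=1/(\lambda-\lambda_1(\X))=O(1/\delta)$ yields $\|\hat{\w}_t\|\leq(O(1/\delta))^{t}$, hence $\log\|\hat{\w}_t\|=O(t\log(1/\delta))=\tilde{O}(1)$.

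The only mildly delicate point, and the main obstacle to watch, is ensuring $\lambda-\lambda_1\geq\Omega(\delta)$ uniformly over all $\lambda$ values actually seen by $\mA$, not merely at the terminal $\lambda_{(f)}$; this is handled by the monotone decrease of the $\lambda_{(s)}$ toward $\lambda_1$, which pushes the worst case to $\lambda_{(f)}$ where Lemma \ref{lem:finalGapBound} applies. Every other step reduces to substituting the already-established estimates $\lambda\leq\lambda_{(0)}=1+\hat{\delta}$, $\|\x_i\|\leq 1$, and $m_1,m_2=\tilde{O}(1)$.
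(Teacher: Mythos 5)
Your proof is correct and takes essentially the intended route: the paper gives no explicit argument for this lemma (it only asserts that it ``follows directly from the discussion in Section \ref{sec:PowerMethodBasedAlg} and the assumptions stated in Theorem \ref{thm:convConvexPCA}''), and your write-up supplies exactly those details — coupling to Algorithm \ref{alg:shrinkInvPM} via Lemma \ref{lem:algEquiv}, the uniform lower bound $\lambda-\lambda_1(\X)\geq\lambda_{(f)}-\lambda_1\geq\hat{\delta}/4=\Omega(\delta)$ from monotonicity and Lemma \ref{lem:finalGapBound}, the Hessian-norm bound $\Vert{\lambda\I-\x_i\x_i^{\top}}\Vert\leq\max\{\lambda,\Vert{\x_i}\Vert^2\}\leq 1+\hat{\delta}$ via the non-PSD clause of Lemma \ref{lem:quadFuncProp}, and the $(O(1/\delta))^{t}$ growth of the unnormalized iterates with $t\leq\max(m_1,m_2)=\tilde{O}(1)$ giving $\log\Vert{\z^*}\Vert=\tilde{O}(1)$.
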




\section{Putting it all together: Fast PCA via SVRG}\label{sec:SVRG4PCA}

In this section we prove Theorems \ref{thm:main}, \ref{thm:mainAcc}.

Following the convex optimization-based eigenvector algorithm presented in the previous section - Algorithm \ref{alg:convexEV}, we consider the implementation of the convex optimization oracle $\mA$, invoked in Algorithm \ref{alg:convexEV}, using the \textsc{SVRG} algorithm \cite{Zhang13} discussed in subsection \ref{sec:stochasticOptPrem}. Recall that the oracle $\mA$ is used to solve Problem \eqref{eq:pcaOptProblem} for some parameters $\lambda, \w$. Indeed the objective $F_{\w, \lambda}(\z)$ in \eqref{eq:pcaOptProblem} could be written as a finite some of functions in the following way:

\begin{eqnarray}\label{eq:pca-optprob}
F_{\w,\lambda}(\z) = 
\frac{1}{n}\sum_{i=1}^n\left({\frac{1}{2}\z^{\top}(\lambda\I-\x_i\x_i^{\top})\z - \w^{\top}\z}\right) .
\end{eqnarray}
Further, recall that for $\lambda > \lambda_1(\X)$, $F_{\w,\lambda}(\z)$ is always $(\lambda-\lambda_1(\X))$-strongly-convex and that for every $i\in[n]$, the function
\begin{eqnarray*}
f_i(\z) := \frac{1}{2}\z^{\top}(\lambda\I-\x_i\x_i^{\top})\z - \w^{\top}\z ,
\end{eqnarray*}
is $\max\{\lambda,\Vert{\x_i}\Vert^2-\lambda\}$ smooth. However, $f_i(\z)$ need not be convex. Hence the SVRG theorem from \cite{Zhang13} could not be directly applied to minimizing \eqref{eq:pca-optprob}. However, we prove in the appendix that the SVRG method still converges but with a slightly worse dependence on the condition number \footnote{We note that in \cite{SSS15} it was shown that the same kind of result holds also for the \textsc{SDCA} method that was originally introduced in \cite{ShwartzZhang}.}.

Below we give an explicit implementation of the the SVRG algorithm for minimizing \eqref{eq:pca-optprob}.

\begin{algorithm}
\caption{\textsc{SVRG for PCA}}
\label{alg:svrg4pca}
\begin{algorithmic}[1]
\STATE Input: $\lambda\in\mathbb{R}, \X=\frac{1}{n}\sum_{i=1}^n\x_i\x_i^{\top}$, $\w$, $\eta, m, T$.
\STATE $\tilde{\z}_0 \gets \vec{0}$
\FOR{$s=1,...T$}
\STATE $\tilde{\z} \gets \tilde{\z}_{s-1}$
\STATE $\tilde{\mu} \gets (\lambda\I-\X)\tilde{\z} - \w_{t-1}$
\STATE $\z_0 \gets \tilde{\z}$
\FOR{$t=1,2,...,m$}
\STATE Randomly pick $i_t\in[n]$
\STATE $\z_t \gets \z_{t-1} - \eta\left({(\lambda\I-\x_{i_t}\x_{i_t}^{\top})(\z_{t-1} - \tilde{\z}) + \tilde{\mu}}\right)$ 
\ENDFOR
\STATE $\tilde{\z}_s \gets \frac{1}{m}\sum_{t=0}^{m-1}\z_t$
\ENDFOR
\RETURN $\tilde{\z}_T$
\end{algorithmic}
\end{algorithm}

The following theorems are proven in the appendix.
\begin{theorem}\label{thm:svrg4pca}
Fix $\epsilon > 0, p > 0$. There exists a choice of $\eta,m$ such that Algorithm \ref{alg:svrg4pca} finds with probability at least $1-p$ an $\epsilon$-approxiated minimizer of \eqref{eq:pca-optprob} in overall time
\begin{eqnarray*}
\tilde{O}\left({N +\frac{d}{(\lambda-\lambda_1(\X))^2}}\right) .
\end{eqnarray*}
\end{theorem}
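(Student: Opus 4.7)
The plan is to adapt the classical \textsc{SVRG} analysis of \cite{Zhang13} to the setting where the individual summands $f_i(\z) = \frac{1}{2}\z^{\top}(\lambda\I-\x_i\x_i^{\top})\z - \w^{\top}\z$ in \eqref{eq:pca-optprob} are only guaranteed to be $\beta$-smooth with $\beta = O(1)$ (by Lemma \ref{lem:convexProblemsBounds}) and need not be convex, while the average $F_{\w,\lambda}$ is $\sigma$-strongly convex with $\sigma = \lambda - \lambda_1(\X) = \Omega(\delta)$. Dropping convexity of the $f_i$ breaks the co-coercivity inequality exploited in \cite{Zhang13}, and the expected cost is a quadratic rather than linear dependence on $\beta/\sigma$ in the iteration complexity, matching the degraded rate observed for non-convex \textsc{SDCA} in \cite{SSS15}.

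First I would bound the second moment of the variance-reduced estimator $v_t := \nabla f_{i_t}(\z_{t-1}) - \nabla f_{i_t}(\tilde\z) + \tilde{\mu}$ appearing in Algorithm \ref{alg:svrg4pca}, using smoothness alone. Since $\E_{i_t}[v_t] = \nabla F(\z_{t-1})$, its centered second moment is at most $\E_{i_t}\Vert{\nabla f_{i_t}(\z_{t-1}) - \nabla f_{i_t}(\tilde\z)}\Vert^2$, and then the triangle inequality together with $\beta$-smoothness of every $f_i$ yields
\begin{eqnarray*}
\E_{i_t}\Vert{v_t - \nabla F(\z_{t-1})}\Vert^2 \leq 2\beta^2\bigl(\Vert{\z_{t-1}-\z^*}\Vert^2 + \Vert{\tilde\z - \z^*}\Vert^2\bigr) .
\end{eqnarray*}
Plugging this bound into the standard one-step expansion of $\Vert{\z_t - \z^*}\Vert^2$ under the update $\z_t = \z_{t-1} - \eta v_t$ and applying $\sigma$-strong convexity of $F$ gives
\begin{eqnarray*}
\E\Vert{\z_t - \z^*}\Vert^2 \leq (1 - 2\eta\sigma + O(\eta^2\beta^2))\,\E\Vert{\z_{t-1}-\z^*}\Vert^2 + O(\eta^2\beta^2)\,\Vert{\tilde\z - \z^*}\Vert^2 .
\end{eqnarray*}

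Next, choosing $\eta = \Theta(\sigma/\beta^2)$ so the $O(\eta^2\beta^2)$ noise is dominated by $\eta\sigma$, and telescoping across the $m$ inner iterations of an epoch starting from $\z_0 = \tilde\z$, one obtains
\begin{eqnarray*}
\E\Vert{\z_m - \z^*}\Vert^2 \leq \bigl[(1-\Theta(\eta\sigma))^m + O(\eta\beta^2/\sigma)\bigr]\Vert{\tilde\z - \z^*}\Vert^2 .
\end{eqnarray*}
Taking $m = \Theta(\beta^2/\sigma^2)$ and $\eta$ small enough makes both terms at most $1/2$, so the averaged (or last) iterate satisfies $\E\Vert{\tilde\z_s - \z^*}\Vert^2 \leq \tfrac{1}{2}\E\Vert{\tilde\z_{s-1} - \z^*}\Vert^2$, the average case following by Jensen's inequality. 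Hence $T = O(\log(1/\epsilon))$ outer epochs suffice to drive $\E\Vert{\tilde\z_T - \z^*}\Vert^2$ below $\epsilon$, and by $\beta$-smoothness of $F$ this translates into an $O(\beta\epsilon)$ function-value gap; a standard Markov plus $O(\log(1/p))$-fold repetition (picking the returned iterate with smallest objective value, estimated cheaply) boosts the expectation bound to the advertised $1-p$ confidence.

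Finally, I would tally the arithmetic cost: the per-epoch full gradient $\nabla F(\tilde\z) = \lambda\tilde\z - \frac{1}{n}\sum_i \x_i(\x_i^{\top}\tilde\z) - \w$ is computable in $O(N+d)$ time, and each of the $m = O(\beta^2/\sigma^2)$ stochastic inner steps costs $O(d)$ (the dense $\lambda(\z_{t-1}-\tilde\z)$ contribution dominates the sparse rank-one piece $\x_{i_t}(\x_{i_t}^{\top}(\z_{t-1}-\tilde\z))$). Multiplying by $T = \tilde O(1)$ epochs yields a total running time of $\tilde O\bigl(N + d\beta^2/\sigma^2\bigr) = \tilde O\bigl(N + d/(\lambda - \lambda_1(\X))^2\bigr)$, which is the claim. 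The main obstacle is the first step: verifying that discarding convexity of the individual $f_i$ costs \emph{exactly} one factor of $\beta/\sigma$ in the condition number (so $\beta/\sigma \mapsto (\beta/\sigma)^2$) and not more, which requires choosing the learning rate $\eta$ delicately so that the $\Vert{\tilde\z - \z^*}\Vert^2$ term introduced by the smoothness-only variance bound can be absorbed into the per-epoch contraction. Everything else is standard \textsc{SVRG} bookkeeping and per-iteration cost accounting for Algorithm \ref{alg:svrg4pca}.
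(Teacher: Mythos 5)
Your proposal is correct and follows essentially the same route as the paper's appendix analysis (Theorem \ref{thm:svrg_nonconvex}): a smoothness-only bound of order $\beta^2(\Vert{\z_{t-1}-\z^*}\Vert^2+\Vert{\tilde\z-\z^*}\Vert^2)$ on the second moment of the variance-reduced gradient, a one-step expansion using strong convexity of $F$, and the choices $\eta=\Theta(\sigma/\beta^2)$, $m=\Theta(\beta^2/\sigma^2)$ giving per-epoch halving and total time $\tilde O(N+d\beta^2/\sigma^2)=\tilde O(N+d/(\lambda-\lambda_1(\X))^2)$. The only differences are cosmetic bookkeeping (you bound the centered variance and telescope the last iterate, then invoke Jensen for the averaged iterate; the paper bounds the uncentered moment relative to $\z^*$ and sums over the epoch), and both handle the high-probability claim by the same standard expectation-to-confidence boosting.
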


Based on the recent acceleration framework of \cite{Harchaoui15} we also have the following result (the proof is given in the appendix).
\begin{theorem}\label{thm:Accsvrg4pca}
Fix $\epsilon > 0, p > 0$. Assume that $\lambda-\lambda_1 = O(\sqrt{d/N})$. There exists an \textit{accelerated} version of Algorithm \ref{alg:svrg4pca} that finds with probability at least $1-p$ an $\epsilon$-approximated minimizer of \eqref{eq:pca-optprob} in overall time
\begin{eqnarray*}
\tilde{O}\left({\frac{N^{3/4}d^{1/4}}{\sqrt{\lambda-\lambda_1(\X)}}}\right) .
\end{eqnarray*}
\end{theorem}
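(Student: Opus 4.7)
The plan is to apply the accelerated proximal-point (``Catalyst'') framework of \cite{Harchaoui15} with Algorithm \ref{alg:svrg4pca} as the inner solver. The Catalyst outer loop performs inexact accelerated proximal-point iterations, solving a sequence of regularized subproblems
\begin{eqnarray*}
\min_{\z\in\mathbb{R}^d}\,G_k(\z) := F_{\w,\lambda}(\z) + \frac{\kappa}{2}\Vert{\z - \y_k}\Vert^2 ,
\end{eqnarray*}
where the prox-centers $\{\y_k\}$ are updated by Nesterov-type extrapolation and $\kappa>0$ is a regularization parameter to be tuned. Two features of the subproblem are crucial. First, $G_k$ inherits the finite-sum structure of \eqref{eq:pca-optprob} (splitting the quadratic evenly across the $n$ summands), with each modified summand having smoothness $(1+\hat{\delta}) + \kappa = O(1+\kappa)$. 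Second, $G_k$ is $(\sigma+\kappa)$-strongly convex, where $\sigma := \lambda-\lambda_1(\X)$. Catalyst's outer convergence then yields $\tilde{O}\bigl(\sqrt{(\sigma+\kappa)/\sigma}\bigr)$ outer iterations to reach $\epsilon$ accuracy on the original problem, provided each inner subproblem is solved to a prescribed (geometrically decaying) accuracy by a linearly convergent base method.

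For the inner solver I invoke the non-convex-component SVRG analysis behind Theorem \ref{thm:svrg4pca}, now applied to $G_k$, whose component smoothness is $O(1+\kappa)$ and whose sum strong convexity is $\sigma+\kappa$. This gives a per-subproblem cost of
\begin{eqnarray*}
\tilde{O}\!\left(N + \frac{d(1+\kappa)^2}{(\sigma+\kappa)^2}\right),
\end{eqnarray*}
so multiplying by the outer iteration count gives a total runtime bound of
\begin{eqnarray*}
\tilde{O}\!\left(\sqrt{\frac{\sigma+\kappa}{\sigma}}\cdot\left(N + \frac{d(1+\kappa)^2}{(\sigma+\kappa)^2}\right)\right).
\end{eqnarray*}

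To finish I tune $\kappa$. Under the hypothesis $\sigma = O(\sqrt{d/N})$ (and the natural assumption $d \leq N$, else the claim is vacuous), set $\kappa := \sqrt{d/N}$. Then $\sigma \leq \kappa \leq 1$, so $\sigma+\kappa = \Theta(\kappa)$ and $1+\kappa = \Theta(1)$, which simplifies the total bound to $\sqrt{\kappa/\sigma}\cdot(N + d/\kappa^2)$. The two terms inside balance exactly at this choice of $\kappa$ (both become $\Theta(N)$), yielding
\begin{eqnarray*}
\tilde{O}\!\left(\sqrt{\frac{\sqrt{d/N}}{\sigma}}\cdot N\right) = \tilde{O}\!\left(\frac{N^{3/4}d^{1/4}}{\sqrt{\sigma}}\right),
\end{eqnarray*}
as claimed.

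The main obstacle I anticipate is justifying the use of Catalyst despite the non-convex individual components $\frac{1}{2}\z^{\top}(\lambda\I-\x_i\x_i^{\top})\z - \w^{\top}\z$. The standard Catalyst analysis treats the inner method as a black box that converges linearly on the $(\sigma+\kappa)$-strongly convex subproblem $G_k$, so the argument goes through provided Theorem \ref{thm:svrg4pca} -- whose proof in the appendix already handles the case of non-convex components summing to a strongly convex objective -- can be invoked on $G_k$. Verifying this cleanly, and checking that the required inner accuracy in Catalyst decays only polynomially in $\epsilon$ and the outer iteration count (so that the extra logarithmic overhead stays absorbed in $\tilde{O}(\cdot)$), is the last technical step I would carry out.
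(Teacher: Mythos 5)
Your proposal is correct and follows essentially the same route as the paper: the appendix accelerates the non-convex-component SVRG (Theorem \ref{thm:svrg_nonconvex}) via the proximal-point framework of \cite{Harchaoui15}, applying SVRG to the regularized subproblems with per-component smoothness $\beta+\kappa$ and strong convexity $\sigma+\kappa$, with $\tilde{O}\bigl(\sqrt{(\sigma+\kappa)/\sigma}\bigr)$ outer iterations, and then optimizes $\kappa$ (Theorems \ref{thm:AccSVRGconv}, \ref{thm:AccSVRGoptimized}) before specializing to $T_G=N$, $T_g=d$, $\beta=O(1)$, $\sigma=\lambda-\lambda_1(\X)$. Your direct choice $\kappa=\sqrt{d/N}$ and the balancing argument reproduce exactly the paper's optimized bound $\tilde{O}\bigl(\sqrt{\beta/\sigma}\,T_G^{3/4}T_g^{1/4}\bigr)$ under the same assumption $\sigma=O(\sqrt{d/N})$.
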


\subsection{Proving Theorems \ref{thm:main}, \ref{thm:mainAcc}}

The proof of Theorem \ref{thm:main} follows from the bounds in Theorem \ref{thm:convConvexPCA}, Lemma \ref{lem:convexProblemsBounds} and Theorem \ref{thm:svrg4pca}.

The proof of Theorem \ref{thm:mainAcc} follows from the bounds in Theorem \ref{thm:convConvexPCA}, Lemma \ref{lem:convexProblemsBounds} and Theorem \ref{thm:Accsvrg4pca}.

\section{Proof of Theorems \ref{thm:main:gapfree}, \ref{thm:main:gapfreeAcc}}\label{sec:gapfree}

In this section we prove Theorems \ref{thm:main:gapfree} and \ref{thm:main:gapfreeAcc}. Since the algorithm and corresponding proofs basically follow from the results of the previous sections, we give the algorithm and a sketch of the proofs.

The algorithm is the same as Algorithm \ref{alg:convexEV}, but intuitively, it replaces the estimate for the eigengap $\hat{\delta}$ with the desired approximation error for the top eigenvalue, $\epsilon$. This is intuitive since now, instead of finding a unit vector that is approximately entirely contained in the span of vectors $\{\u_i \, | \, \lambda_i > \lambda_1 - \delta\}=\{\u_1\}$, we wish to find a unit vector that is approximately entirely contained in the span of vectors $\{\u_i \, | \, \lambda_i > \lambda_1 - \epsilon\}$.

\begin{algorithm}
\caption{\textsc{Leading Eigenvalue Approximation via Convex Optimization}}
\label{alg:convexEigVal}
\begin{algorithmic}[1]
\STATE Input: matrix $\X\in\mathbb{R}^{n\times n}$ such that $\X\succeq 0$, $\lambda_1(\X) \leq 1$, accuracy parameter $\epsilon\in(0,1)$, failure probability parameters $p$, positive integers $m_1,m_2$, numerical accuracy parameter $\tilde{\epsilon}$
\STATE $\lambda_{(0)} \gets 1+ \epsilon$
\STATE Let $\hat{\w}_0$ be a random unit vector
\STATE $s \gets 0$
\REPEAT
\STATE $s \gets s+1$
\STATE Let $\M_s = (\lambda_{(s-1)}\I-\X)$
\FOR{$t=1...m_1$}
\STATE Apply Algorithm $\mA$ to find a vector $\hat{\w}_t$ such that $\Vert{\hat{\w}_t - \M_s^{-1}\hat{\w}_{t-1}}\Vert \leq \tilde{\epsilon}$
\ENDFOR
\STATE $\w_s \gets \frac{\hat{\w}_{m_1}}{\Vert{\hat{\w}_{m_1}}\Vert}$
\STATE Apply Algorithm $\mA$ to find a vector $\v_s$ such that $\Vert{\v_s - \M_s^{-1}\w_s}\Vert \leq \tilde{\epsilon}$
\STATE $\Delta_s \gets \frac{1}{2}\cdot\frac{1}{ \w_s^{\top}\v_s - \tilde{\epsilon}}$
\STATE $\lambda_{(s)} \gets \lambda_{(s-1)} - \frac{\Delta_s}{2}$
\UNTIL{$\Delta_s \leq \epsilon$}
\STATE $\lambda_{(f)} \gets \lambda_{(s)}$
\STATE Let $\M_f = (\lambda_{(f)}\I-\X)$
\FOR{$t=1...m_2$}
\STATE Apply Algorithm $\mA$ to find a vector $\hat{\w}_t$ such that $\Vert{\hat{\w}_t - \M_f^{-1}\hat{\w}_{t-1}}\Vert \leq \tilde{\epsilon}$
\ENDFOR
\RETURN $\w_f \gets \frac{\hat{\w}_{m_2}}{\Vert{\hat{\w}_{m_2}}\Vert}$
\end{algorithmic}
\end{algorithm}

The following simple lemma is of the same flavor as Lemma \ref{lem:inverseCond} and shows that we can benefit from conditioning the inverse matrix $(\lambda\I-\X)^{-1}$, even if the goal is only to approximate the leading eigenvalue and not the leading eigenvector.

\begin{lemma}\label{lem:inverseCond2}
Fix $\epsilon > 0$ and a scalar $a > 0$.  Let $\M^{-1} = (\lambda\I-\X)^{-1}$ such that $\lambda_1(\X) + a\cdot \epsilon \geq \lambda > \lambda_1(\X)$. It holds for all $i\in[d]$ such that $\lambda_i(\X) \leq \lambda_1(\X)-\epsilon$, that
\begin{eqnarray*}
\frac{\lambda_1(\M^{-1})}{\lambda_i(\M^{-1})} \geq 1 + a^{-1} .
\end{eqnarray*}
\end{lemma}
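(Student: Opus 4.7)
The plan is to mimic the calculation done in Lemma \ref{lem:inverseCond}, but now apply it to a generic index $i$ for which the eigenvalue gap to $\lambda_1(\X)$ is at least $\epsilon$ (rather than exactly $\delta$). Because $\lambda > \lambda_1(\X) \geq \lambda_i(\X)$, the matrix $\M = \lambda\I - \X$ is positive definite, and its inverse has eigenvalues $\lambda_j(\M^{-1}) = (\lambda - \lambda_j(\X))^{-1}$ with the same eigenvectors as $\X$ and in the reversed order. So the first step is just to write
\begin{eqnarray*}
\frac{\lambda_1(\M^{-1})}{\lambda_i(\M^{-1})} \;=\; \frac{\lambda - \lambda_i(\X)}{\lambda - \lambda_1(\X)} \;=\; 1 + \frac{\lambda_1(\X) - \lambda_i(\X)}{\lambda - \lambda_1(\X)}.
\end{eqnarray*}

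Then I would plug in the two hypotheses to bound the fraction on the right. The denominator is controlled from above by the assumption $\lambda \leq \lambda_1(\X) + a\epsilon$, giving $\lambda - \lambda_1(\X) \leq a\epsilon$; the numerator is controlled from below by the assumption on index $i$, namely $\lambda_i(\X) \leq \lambda_1(\X) - \epsilon$, giving $\lambda_1(\X) - \lambda_i(\X) \geq \epsilon$. Combining these two bounds,
\begin{eqnarray*}
\frac{\lambda_1(\X) - \lambda_i(\X)}{\lambda - \lambda_1(\X)} \;\geq\; \frac{\epsilon}{a\epsilon} \;=\; \frac{1}{a},
\end{eqnarray*}
and the claim follows immediately.

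The only thing worth double-checking is that the assumption $\lambda > \lambda_1(\X)$ makes the denominator strictly positive, so the division is legitimate and the inequality direction is preserved; this is built into the hypothesis. There is no real obstacle here — the lemma is essentially a one-line algebraic manipulation, which is why it mirrors the structure of Lemma \ref{lem:inverseCond} so closely and serves the analogous purpose in the gap-free setting: controlling the ratio between the leading eigenvalue of $\M^{-1}$ and every eigenvalue corresponding to a direction we want to suppress via the Power Method on $\M^{-1}$.
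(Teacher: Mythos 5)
Your proof is correct and follows essentially the same route as the paper: write $\lambda_1(\M^{-1})/\lambda_i(\M^{-1}) = (\lambda-\lambda_i)/(\lambda-\lambda_1) = 1 + \frac{\lambda_1-\lambda_i}{\lambda-\lambda_1}$ and bound numerator and denominator by $\epsilon$ and $a\epsilon$ respectively. In fact your version is slightly cleaner, since the paper's display contains a small typo ($\lambda-\lambda_i$ where the denominator should be $\lambda-\lambda_1$), which your derivation gets right.
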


\begin{proof}

It holds that
\begin{eqnarray*}
\frac{\lambda_1(\M^{-1})}{\lambda_i(\M^{-1})} = \frac{1}{\lambda-\lambda_1} \cdot (\lambda-\lambda_i)
= 1 + \frac{\lambda_1-\lambda_i}{\lambda-\lambda_i} \geq 1 + \frac{\lambda_1-\lambda_i}{a\epsilon} .
\end{eqnarray*}

Thus, for any $i\in[d]$ such that $\lambda_1-\lambda_i \geq \epsilon$ it follows that $\frac{\lambda_1(\M^{-1})}{\lambda_i(\M^{-1})} \geq 1 + a^{-1}$.
\end{proof}

In order to prove the convergence of Algorithm \ref{alg:convexEigVal} we are going to need a slightly different result for the Power Method (Algorithm \ref{alg:powerm}) than that of Theorem \ref{thm:pm}. This result follows from the same analysis as Theorem \ref{thm:pm}. For a proof see the proof of Theorem \ref{thm:pm}.

\newcommand{\Tpm}{T^{\textsc{PM}}}

\begin{lemma}[Convergence of Power Method to span of top eigenvectors]\label{lem:pm:span}
Let $\M$ be a positive definite matrix and denote its eigenvalues in descending order by $\lambda1, \lambda_2,...,\lambda_d$, and let $\u_1,\u_2,...,\u_d$ denote the corresponding eigenvectors. Fix $\epsilon_1,\epsilon_2 \in (0,1)$ and failure probability $p > 0$. Define:
\begin{eqnarray*}
\Tpm(\epsilon_1,\epsilon_2,p) := \lceil{\frac{1}{2\epsilon_1}\ln\left({\frac{9d}{p^2\epsilon_2}}\right)}\rceil.
\end{eqnarray*}
Then, with probability $1-p$ it holds for any $t \geq \Tpm(\epsilon_1,\epsilon_2,p)$ that 
\begin{eqnarray*}
\sum_{i\in[d]: \, \lambda_i \leq (1-\epsilon_1)\lambda_1}(\w_t^{\top}\u_i)^2 \leq \epsilon_2 .
\end{eqnarray*}
The probability of success depends only on the random variable $(\w_0^{\top}\u_1)^2$.
\end{lemma}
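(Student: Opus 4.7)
The plan is to run the standard power-method calculation in the eigenbasis and then trade a bound on the suppression factor $(1-\epsilon_1)^{2t}$ against a high-probability lower bound on the initial overlap with $\u_1$. Write $\w_0=\sum_{i=1}^d \alpha_i \u_i$ with $\sum_i\alpha_i^2=1$. Since $\w_t = \M^t\w_0/\|\M^t\w_0\|$, one has, for every $i$,
\[
(\w_t^\top\u_i)^2 \;=\; \frac{\alpha_i^2\lambda_i^{2t}}{\sum_{j=1}^d \alpha_j^2\lambda_j^{2t}} .
\]
Split the index set as $S=\{i:\lambda_i\le(1-\epsilon_1)\lambda_1\}$ and its complement. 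For $i\in S$ the numerator satisfies $\lambda_i^{2t}\le (1-\epsilon_1)^{2t}\lambda_1^{2t}$, and the denominator is at least the $i=1$ term, so
\[
\sum_{i\in S}(\w_t^\top\u_i)^2
\;\le\;
\frac{(1-\epsilon_1)^{2t}\lambda_1^{2t}\sum_{i\in S}\alpha_i^2}{\alpha_1^2\lambda_1^{2t}}
\;\le\;
\frac{(1-\epsilon_1)^{2t}}{\alpha_1^2}.
\]

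Next I would invoke a standard anti-concentration fact for a uniformly random unit vector $\w_0\in\mathbb{R}^d$: with probability at least $1-p$ one has $\alpha_1^2=(\w_0^\top\u_1)^2\ge p^2/(9d)$ (this is the same bound used implicitly in Theorem~\ref{thm:pm}, and makes transparent the lemma's remark that the success probability only depends on the random variable $(\w_0^\top\u_1)^2$). Combined with the display above, success with probability $1-p$ reduces to showing $(1-\epsilon_1)^{2t}\cdot 9d/p^2\le\epsilon_2$, i.e.
\[
2t\bigl(-\ln(1-\epsilon_1)\bigr)\;\ge\;\ln\!\Bigl(\tfrac{9d}{p^2\epsilon_2}\Bigr).
\]
Using $-\ln(1-\epsilon_1)\ge \epsilon_1$ for $\epsilon_1\in(0,1)$, it suffices to take $t\ge \lceil \frac{1}{2\epsilon_1}\ln(9d/(p^2\epsilon_2))\rceil = T^{\textsc{PM}}(\epsilon_1,\epsilon_2,p)$, which is exactly the bound claimed.

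The only nontrivial ingredient is the random-initialization estimate $\Pr[\alpha_1^2<p^2/(9d)]\le p$; since the excerpt already states that this lemma follows from the same analysis as Theorem~\ref{thm:pm} (whose proof is deferred to the appendix), I would simply cite that initialization bound rather than reprove it. The rest is the one-line eigen-expansion argument above, and the final step of replacing $-\ln(1-\epsilon_1)$ by $\epsilon_1$ is an elementary inequality, so no real obstacle remains.
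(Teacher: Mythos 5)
Your argument is correct and is essentially the paper's own proof: the paper also expands $\w_t$ in the eigenbasis, bounds each $(\w_t^{\top}\u_i)^2$ by $\frac{(\w_0^{\top}\u_i)^2}{(\w_0^{\top}\u_1)^2}\left(\frac{\lambda_i}{\lambda_1}\right)^{2t}$, invokes the same initialization bound $(\w_0^{\top}\u_1)^2 \geq p^2/(9d)$ with probability $1-p$, and sums over the indices with $\lambda_i \leq (1-\epsilon_1)\lambda_1$ using $\sum_i(\w_0^{\top}\u_i)^2=1$. The only cosmetic difference is that the paper passes through $\exp\left(-2\frac{\lambda_1-\lambda_i}{\lambda_1}t\right)$ while you keep $(1-\epsilon_1)^{2t}$ and use $-\ln(1-\epsilon_1)\geq\epsilon_1$, which yields the same threshold $\Tpm(\epsilon_1,\epsilon_2,p)$.
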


\begin{theorem}\label{thm:convergenceEigValAlg}
There exists a choice for the parameters $m_1,m_2,\tilde{\epsilon}$ such that Algorithm \ref{alg:convexEigVal}, when implemented with SVRG as the optimization oracle $\mA$, finds in time $\tilde{O}\left({\frac{d}{\epsilon^2} +N}\right)$ a unit vector $\w_f$ that with probability at least $1-p$ satisfies,
\begin{eqnarray*}
\w_f^{\top}\X\w_f \geq \lambda_1 - \epsilon .
\end{eqnarray*}
\end{theorem}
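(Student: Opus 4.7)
The plan is to mirror the proof of Theorem \ref{thm:convConvexPCA}, substituting the role of the eigengap estimate $\hat{\delta}$ with the desired accuracy $\epsilon$ throughout, and replacing the use of Lemma \ref{lem:inverseCond} (which gives a spectral-gap-based conditioning bound) by Lemma \ref{lem:inverseCond2}, and the use of Theorem \ref{thm:approxPM}'s second item by the Lemma \ref{lem:pm:span} analogue.

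First I would argue that the repeat--until loop terminates in $O(\log(\epsilon^{-1}))$ iterations and produces a final estimate $\lambda_{(f)}$ satisfying $\lambda_1 + \tfrac{\epsilon}{4} \leq \lambda_{(f)} \leq \lambda_1 + \tfrac{3\epsilon}{2}$. This follows by exactly the same arguments as in Lemmas \ref{lem:loopBound} and \ref{lem:finalGapBound}: those proofs only used the shrinking property of the update $\lambda_{(s)} \gets \lambda_{(s-1)} - \Delta_s/2$ together with the Lemma \ref{lem:invapprox} bounds, and never used the existence of a gap. The only change is that the stopping criterion is $\Delta_s \leq \epsilon$ instead of $\Delta_s \leq \hat{\delta}$, so the constants carry over verbatim with $\hat{\delta}$ replaced by $\epsilon$. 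The analog of Lemma \ref{lem:algEquiv} — that with $\tilde{\epsilon}$ chosen of order $(\epsilon/8)^{m_1+1}/16$ the inaccurate computation of $\Delta_s$ in Algorithm \ref{alg:convexEigVal} agrees with the exact shrinking inverse power method — then follows identically.

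Next, for the final invocation on $\M_f^{-1}$, I would apply Lemma \ref{lem:inverseCond2} with $a = 3/2$: since $\lambda_{(f)} - \lambda_1 \leq \tfrac{3\epsilon}{2}$, every index $i$ with $\lambda_i \leq \lambda_1 - \epsilon$ satisfies $\lambda_1(\M_f^{-1})/\lambda_i(\M_f^{-1}) \geq 5/3$. Hence, applying Lemma \ref{lem:pm:span} to $\M_f^{-1}$ with $\epsilon_1 = 2/5$ and $\epsilon_2 = \epsilon/(4\lambda_1)$, after $m_2 = O(\log(d/(p\epsilon)))$ inverse-power iterations the ideal iterate $\w_f^*$ satisfies $\sum_{i:\,\lambda_i \leq \lambda_1 - \epsilon} (\w_f^{*\top}\u_i)^2 \leq \epsilon/(4\lambda_1)$. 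The Rayleigh quotient then splits as
\begin{align*}
\w_f^{*\top}\X\w_f^{*}
&= \sum_{i:\,\lambda_i > \lambda_1 - \epsilon}\lambda_i(\w_f^{*\top}\u_i)^2 + \sum_{i:\,\lambda_i \leq \lambda_1 - \epsilon}\lambda_i(\w_f^{*\top}\u_i)^2 \\
&\geq (\lambda_1 - \epsilon)\left(1 - \tfrac{\epsilon}{4\lambda_1}\right) \geq \lambda_1 - \tfrac{3\epsilon}{2}.
\end{align*}
The inaccurate-update slack is controlled exactly as in the proof of Theorem \ref{thm:approxPM}: the convexity bound $|\w_f^\top \X \w_f - \w_f^{*\top}\X\w_f^{*}| \leq 2\lambda_1\Vert\w_f-\w_f^*\Vert$ combined with Lemma \ref{lem:approxProduct} and the bound on $\Gamma(\M_f^{-1},m_2) \leq (8/\epsilon)^{m_2+1}$ from Eq.~\eqref{eq:9} (with $\hat{\delta}$ replaced by $\epsilon$) shows that choosing $\tilde{\epsilon} \leq \tfrac{\epsilon}{4}(\epsilon/8)^{m_2+1}$ loses only another $O(\epsilon)$ in the Rayleigh quotient. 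A final rescaling of $\epsilon$ by an absolute constant yields $\w_f^\top \X \w_f \geq \lambda_1 - \epsilon$.

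For the running time I would plug SVRG (Theorem \ref{thm:svrg4pca}) into each oracle call. By the monotone shrinkage established in the analog of Lemma \ref{lem:algEquiv}, every matrix $\M_s$ or $\M_f$ encountered has $\lambda_{(s)} - \lambda_1(\X) = \Omega(\epsilon)$, so the induced quadratic $F_{\w,\lambda}$ is $\Omega(\epsilon)$-strongly convex while each component is $O(1)$-smooth (as in Lemma \ref{lem:convexProblemsBounds}). Each oracle call therefore costs $\tilde{O}(N + d/\epsilon^2)$, and by the analog of Theorem \ref{thm:convConvexPCA} only $\tilde{O}(1)$ calls are made in total, giving the claimed $\tilde{O}(d/\epsilon^2 + N)$ bound. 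The main obstacle I anticipate is the gap-free Rayleigh-quotient step: absent a spectral gap, one must work with the soft projection guarantee of Lemma \ref{lem:pm:span} rather than with a clean $(\w_f^\top \u_1)^2$ bound, and one has to track the logarithmic factors carefully to verify that $\tilde{\epsilon}$ only enters the running time through an $O(\log(1/\epsilon))$ factor hidden in $\tilde{O}(\cdot)$.
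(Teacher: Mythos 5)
Your proposal is correct and follows essentially the same route as the paper's proof sketch: substituting $\epsilon$ for $\hat{\delta}$ in the shrinking loop (Lemmas \ref{lem:loopBound}, \ref{lem:finalGapBound}, \ref{lem:algEquiv}), using Lemma \ref{lem:inverseCond2} to condition $\M_f^{-1}$ relative to the $\epsilon$-near-top eigenspace, invoking Lemma \ref{lem:pm:span} together with a Rayleigh-quotient split to get $\w_f^{\top}\X\w_f \geq \lambda_1-O(\epsilon)$, and charging each of the $\tilde{O}(1)$ oracle calls $\tilde{O}(N+d/\epsilon^2)$ via the SVRG bound, with only constant-level differences (e.g.\ threshold $\lambda_1-\epsilon$ with $a=3/2$ versus the paper's $\lambda_1-\epsilon/2$ with $a=3$). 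One minor caveat: the claim that Lemma \ref{lem:algEquiv}'s constants carry over ``verbatim'' is slightly imprecise, since its Eq.~\eqref{eq:8} uses $\lambda_1 \geq \delta$, which has no $\epsilon$-analogue; however, the required bound $\Gamma(\M_s^{-1},m_1) \leq (c/\epsilon)^{m_1+1}$ is easily recovered from $\lambda_d(\M_s^{-1}) \geq (1+\epsilon)^{-1}$ and $\lambda_1(\M_s^{-1}) \leq 4/\epsilon$, so this only affects absolute constants and not the stated choice $\log(1/\tilde{\epsilon})=\tilde{O}(1)$.
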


\begin{proof}[Proof sketch]
Following the same lines of the analysis presented in Theorem \ref{thm:convConvexPCA}, there exists $m_1 = \tilde{O}(1)$ and $\tilde{\epsilon}$ satisfying $\log(1/\tilde{\epsilon}) = \tilde{O}(1)$, such that the repeat-until loop terminates after $\tilde{O}(1)$ iterations with a value $\lambda_{(f)}$ such that $\frac{3}{2}\epsilon \geq \lambda_{(f)} - \lambda_1 \geq \frac{1}{4}\epsilon$.

Denote $S(\epsilon) = \{i\in[d] \, | \, \lambda_i > \lambda_1 - \epsilon/2\}$. By Lemma \ref{lem:inverseCond2} we have that for all $i\notin S(\epsilon)$ it holds that $\lambda_i(\M_f^{-1}) \leq \frac{3}{4}\lambda_1(\M_f^{-1})$.

Thus, by applying Lemma \ref{lem:pm:span} with respect to the matrix $\M_f^{-1}$ with $\epsilon_1 = \frac{1}{4}$, $\epsilon_2 = \epsilon/2$, we get that for $m_2 = \tilde{O}(1)$, it follows that $\sum_{i\in S(\epsilon)}(\w_f^{\top}\u_i)^2 \geq 1-\epsilon/2$. Thus it follows that
\begin{eqnarray*}
\w_f^{\top}\X\w_f &=& \sum_{i=1}^d\lambda_i(\w_f^{\top}\u_i)^2 \geq \sum_{i\in S(\epsilon)}\lambda_i(\w_f^{\top}\u_i)^2
\geq (\lambda_1-\epsilon/2)\sum_{i\in S(\epsilon)}(\w_f^{\top}\u_i)^2 \\
&\geq & (\lambda_1-\epsilon/2)(1-\epsilon/2) > \lambda_1-\epsilon .
\end{eqnarray*}

Since on every iteration $s$ of the repeat-until loop it holds that $\lambda_{(s)}-\lambda_1 = \Omega(\epsilon)$, similarly to Lemma \ref{lem:convexProblemsBounds}, we have that the optimization oracle $\mA$ is applied to solving $O(1)$-smooth and $\Omega(\epsilon)$-strongly convex problems. Hence, by the SVRG theorem, Theorem \ref{thm:svrg4pca}, each invocation of $\mA$ when implemented using the SVRG algorithm, requires $\tilde{O}\left({\frac{d}{\epsilon^2} + N}\right)$ time. Hence, the theorem follows.
\end{proof}

Theorem \ref{thm:main:gapfree} follows from Theorem \ref{thm:convergenceEigValAlg} and the observation that by using a standard concentration argument for matrices \footnote{See for instance the Matrix Hoeffding concentration inequality in \cite{Tropp12}.}, instead of applying the algorithm directly to the matrix $\X = \frac{1}{n}\sum_{i=1}^n\x_i\x_i$, it suffices to apply it to the matrix $\tilde{\X} = \frac{1}{n'}\sum_{i=1}^{n'}\tilde{\x}_i\tilde{\x}_i$, where $n' =\tilde{O}(\epsilon^{-2})$ and the vectors $\{\tilde{\x}_i\}_{i=1}^{n'}$ are sampled uniformly from $\{\x_i\}_{i=1}^n$. Hence, we can basically substitute $N$ with $d/\epsilon^2$ in the running time stated in Theorem \ref{thm:convergenceEigValAlg}. 
From this observation it also follows that Theorem \ref{thm:main:gapfree} holds also when $\X$ is not given explicitly as a finite sum of rank one matrices, but $\X = \E_{\x\sim\mD}[\x\x^{\top}]$ for some unknown distribution $\mD$ from which we can sample in $O(d)$ time.

Theorem \ref{thm:main:gapfreeAcc} follows if we replace the use of the SVRG algorithm in the proof of Theorem \ref{thm:convergenceEigValAlg} with its accelerated version, i.e. use Theorem \ref{thm:Accsvrg4pca} instead of Theorem \ref{thm:svrg4pca}.

\section{Dense PCA and Acceleration of Algorithms for Semidefinite Optimization}\label{sec:sdpApp}

So far we have considered the problem of computing an approximation for the largest eigenvector of a matrix $\X$ given as a sum of rank-one matrices, i.e. $\X = \frac{1}{n}\sum_{i=1}^n\x_i\x_i^{\top}$. However, our techniques extend well beyond this case. In fact, the only place in which we have used our structural assumption on $\X$ is in our implementation of SVRG for PCA given in Algorithm \ref{alg:svrg4pca} and the corresponding runtime analysis.

In this section we consider the following natural generalization of the PCA problem which we refer to as \textit{dense PCA}: we assume $\X$ is of the form 
\begin{eqnarray*}
\X = \sum_{i=1}^np_i\A_i, 
\end{eqnarray*}
where $\sum_{i=1}^np_i = 1$, $\forall i\in[n] p_i \geq 0$ and $\forall i\in[n]$ $\A_i$ is a symmetric real $d\times d$ matrix. We further assume that $\forall i\in[n]$ $\Vert{\A_i}\Vert \leq 1$ and $\X \succeq 0$. We focus here on the problem of fast approximation of $\lambda_1(\X)$, i.e. finding a unit vector $\w$ such that
\begin{eqnarray*}
\w^{\top}\X\w \geq \lambda_1(\X) - \epsilon .
\end{eqnarray*}

Attaining such an approximated eignevector for $\X$, as considered in this section ,is an important subroutine in several algorithms for Semidefinite Programming \cite{AHK05, Hazan08, Daspremont14, GHsubSDP15} and Online Learning \cite{GHM15, KW15, ALT15, DTTZ14}.

For the purposes of this section we denote by $N$ the total number of non-zeros in all matrices $\A_1,...,\A_n$, and in addition we denote by $S$ the maximal number of non-zero entries in any of the matrices $\A_1,...,\A_n$.

\subsection{SVRG for Dense PCA}
As discussed above, our fast PCA algorithms and corresponding analysis could be directly applied to the dense case under consideration in this section. The only thing that needs to be changed is the application of the SVRG algorithm. A modified SVRG algorithm for the dense PCA problem is detailed below. Specifically, we apply SVRG to the following optimization problem:
\begin{eqnarray}\label{eq:densePCA-optprob}
\min_{\z\in\reals^d}\{F_{\w,\lambda}(\z) := \sum_{i=1}^np_i\left({\z^{\top}(\lambda\I-\A_i)\z - \w^{\top}\z}\right) \}.
\end{eqnarray}

\begin{algorithm}[H]
\caption{SVRG for Dense PCA}
\label{alg:svrg4DensePCA}
\begin{algorithmic}[1]
\STATE Input: $\lambda\in\mathbb{R}, \X=\sum_{i=1}^np_i\A_i$, $\w$, $\eta, m, T$.
\STATE $\tilde{\z}_0 \gets \vec{0}$
\FOR{$s=1,...T$}
\STATE $\tilde{\z} \gets \tilde{\z}_{s-1}$
\STATE $\tilde{\mu} \gets (\lambda\I-\X)\tilde{\z} - \w_{t-1}$
\STATE $\z_0 \gets \tilde{\z}$
\FOR{$t=1,2,...,m$}
\STATE Pick $i_t\in[n]$ according to probability distribution $(p_1,p_2,...,p_n)$
\STATE $\z_t \gets \z_{t-1} - \eta\left({(\lambda\I-\A_{i_t})(\z_{t-1} - \tilde{\z}) + \tilde{\mu}}\right)$ 
\ENDFOR
\STATE $\tilde{\z}_s \gets \frac{1}{m}\sum_{t=0}^{m-1}\z_t$
\ENDFOR
\RETURN $\tilde{\z}_T$
\end{algorithmic}
\end{algorithm}

Note that here, as opposed to the standard PCA problem, we assume that $\X$ is given by a weighted average of matrices and not necessarily a uniform average. This difference comes into play in Algorithm \ref{alg:svrg4DensePCA} when we sample a random gradient index $i_t$ according to the weights $\{p_i\}_{i=1}^n$, and not uniformly as in Algorithm \ref{alg:svrg4pca}. This change however does not change the analysis of the algorithm given in Theorem \ref{thm:svrg_nonconvex}, and thus we have the following theorem which is analogues to Theorem \ref{thm:svrg4pca}.

\begin{theorem}\label{thm:svrg4DensePCA}
Fix $\epsilon > 0, p > 0$. There exists a choice of $\eta,m$ such that Algorithm \ref{alg:svrg4DensePCA} finds with probability at least $1-p$ an $\epsilon$-approxiated minimizer of \eqref{eq:densePCA-optprob} in overall time
\begin{eqnarray*}
\tilde{O}\left({N +\frac{d+S}{(\lambda-\lambda_1(\X))^2}}\right) .
\end{eqnarray*}
\end{theorem}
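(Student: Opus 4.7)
The plan is to mimic the proof of Theorem \ref{thm:svrg4pca} essentially verbatim, since the only structural differences between Algorithm \ref{alg:svrg4DensePCA} and Algorithm \ref{alg:svrg4pca} are (i) the sampling distribution over component functions, and (ii) the per-iteration arithmetic cost. First I would write \eqref{eq:densePCA-optprob} in the canonical finite-sum form by letting $f_i(\z)=\z^{\top}(\lambda\I-\A_i)\z-\w^{\top}\z$ and viewing $F_{\w,\lambda}(\z)=\E_{i\sim(p_1,\ldots,p_n)}[f_i(\z)]$. Under this reweighting, the stochastic gradient used in the inner loop, $(\lambda\I-\A_{i_t})(\z_{t-1}-\tilde{\z})+\tilde{\mu}$, is an unbiased estimator of $\nabla F_{\w,\lambda}(\z_{t-1})$, exactly as in the uniform case.

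Next I would verify the ingredients required for the (possibly) non-convex SVRG convergence result that Theorem \ref{thm:svrg4pca} invokes from the appendix. The sum $F_{\w,\lambda}$ is $(\lambda-\lambda_1(\X))$-strongly convex by Lemma \ref{lem:quadFuncProp}, since $\lambda>\lambda_1(\X)$ under the conditions of Theorem \ref{thm:convConvexPCA} (and its gap-free analogue). Each individual $f_i$ need not be convex because $\A_i$ can have negative eigenvalues, but by Lemma \ref{lem:quadFuncProp} each $f_i$ is $\|\lambda\I-\A_i\|$-smooth, and using $\lambda\leq 1+\hat{\delta}=O(1)$ together with $\|\A_i\|\leq 1$ this is $O(1)$. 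Thus the setting is identical in structure to the standard PCA case, and the same non-convex SVRG analysis (Theorem \ref{thm:svrg_nonconvex}) gives convergence to an $\epsilon$-approximate minimizer with probability $1-p$ in $T=\tilde{O}(1)$ outer iterations, provided the inner-loop length is $m=\tilde{O}(\beta^2/\sigma^2)=\tilde{O}(1/(\lambda-\lambda_1(\X))^2)$ and $\eta$ is chosen accordingly.

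It remains to bound the arithmetic complexity. Computing the full-gradient snapshot $\tilde{\mu}=(\lambda\I-\X)\tilde{\z}-\w$ in each outer iteration costs $O(N+d)$ time, where the $N$ term comes from applying each $\A_i$ to $\tilde{\z}$ and accumulating with weights $p_i$. Each inner iteration requires applying $\lambda\I-\A_{i_t}$ to the vector $\z_{t-1}-\tilde{\z}$ and adding $\tilde{\mu}$, which costs $O(d+S)$ time, since $\A_{i_t}$ has at most $S$ non-zeros and the identity contribution plus the addition of $\tilde{\mu}$ is $O(d)$. Multiplying through yields a total running time of
\begin{equation*}
\tilde{O}\!\left(T\cdot\bigl(N+m(d+S)\bigr)\right)=\tilde{O}\!\left(N+\frac{d+S}{(\lambda-\lambda_1(\X))^2}\right),
\end{equation*}
which matches the claim.

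The only genuine obstacle is ensuring that the non-convex SVRG convergence result (quoted via Theorem \ref{thm:svrg_nonconvex} in the appendix for the standard PCA case) goes through unchanged under non-uniform sampling. This is a straightforward modification: the SVRG analysis uses only unbiasedness of the stochastic gradient and a uniform-in-$i$ smoothness bound on $f_i$, both of which hold here. I would note this explicitly and otherwise defer to the appendix proof of Theorem \ref{thm:svrg4pca}, since no other step is specific to the rank-one structure of $\X$.
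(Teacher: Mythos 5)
Your proposal is correct and matches the paper's own reasoning: the paper likewise proves this theorem by observing that non-uniform sampling leaves the analysis of Theorem \ref{thm:svrg_nonconvex} unchanged (unbiasedness and the per-component smoothness bound are all that is used), and that the cost accounting changes only in that a stochastic step now costs $O(d+S)$ while the full-gradient snapshot costs $O(N+d)$, yielding $\tilde{O}\left({N+\frac{d+S}{(\lambda-\lambda_1(\X))^2}}\right)$. The only trivial slip is invoking $\lambda \leq 1+\hat{\delta}$ rather than the gap-free initialization $\lambda_{(0)}=1+\epsilon$, but either way $\lambda=O(1)$, so the smoothness bound stands.
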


By Plugging Theorem \ref{thm:svrg4DensePCA} into the proof of Theorem \ref{thm:main:gapfree}(instead of Theorem \ref{thm:svrg4pca}) we arrive at the following theorem (analogues to Theorem \ref{thm:main:gapfree}).

\begin{theorem}\label{thm:DensePCA}
Fix $\epsilon >0, p>0$. There exists an algorithm that finds with probability at least $1-p$ a unit vector $\w$ such that $\w^{\top}\X\w \geq \lambda_1 - \epsilon$, in total time $\tilde{O}\left({\frac{d+S}{\epsilon^2}}\right)$.
\end{theorem}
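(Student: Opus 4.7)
The plan is to reduce Theorem \ref{thm:DensePCA} to two ingredients already essentially in place: the subsampling trick used in the proof of Theorem \ref{thm:main:gapfree}, and the dense SVRG runtime of Theorem \ref{thm:svrg4DensePCA}. First, I would run Algorithm \ref{alg:convexEigVal} on the matrix $\X=\sum_{i=1}^n p_i \A_i$ using Algorithm \ref{alg:svrg4DensePCA} as the optimization oracle $\mA$. The correctness argument is essentially unchanged from Theorem \ref{thm:convergenceEigValAlg}: the repeat-until loop terminates after $\tilde{O}(1)$ iterations with an estimate $\lambda_{(f)}$ satisfying $\tfrac14\epsilon \le \lambda_{(f)}-\lambda_1 \le \tfrac32\epsilon$, and Lemma \ref{lem:inverseCond2} together with Lemma \ref{lem:pm:span} then give the desired $\w_f^{\top}\X\w_f \ge \lambda_1 - \epsilon$ with the right probability. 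None of this analysis uses the rank-one structure of $\X$; it only uses that $\X\succeq 0$ and $\lambda_1(\X)\le 1$. So the sole change is in the per-oracle-call runtime: Theorem \ref{thm:svrg4DensePCA} yields $\tilde{O}\bigl(N+(d+S)/(\lambda-\lambda_1(\X))^2\bigr)$ per oracle call, which for the $\tilde{O}(1)$ calls we make with $\lambda-\lambda_1(\X)=\Omega(\epsilon)$ gives a total running time of $\tilde{O}\bigl(N+(d+S)/\epsilon^2\bigr)$.

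Next, to shave $N$ down to something at most $\tilde{O}(S/\epsilon^2)$, I would import the subsampling argument from the proof of Theorem \ref{thm:main:gapfree}. Draw $n' = \tilde{O}(\epsilon^{-2})$ matrices $\tilde{\A}_1,\dots,\tilde{\A}_{n'}$ i.i.d.\ from the distribution $(p_1,\dots,p_n)$ over $\{\A_1,\dots,\A_n\}$, and form
\begin{equation*}
\tilde{\X} = \frac{1}{n'}\sum_{j=1}^{n'}\tilde{\A}_j.
\end{equation*}
Since each $\|\A_i\|\le 1$ and $\Exp\tilde{\X}=\X$, the Matrix Hoeffding inequality (\cite{Tropp12}) guarantees that with $n' = \tilde{O}(\epsilon^{-2})$ we have $\|\tilde{\X}-\X\|\le \epsilon/3$ with probability at least $1-p/2$. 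A small technical point: $\tilde{\X}$ may not be PSD, but we can replace it by $\tilde{\X}+\I$ and shift back at the end, or equivalently work with $(\lambda \I - \tilde{\X})^{-1}$ for $\lambda$ chosen larger than $\|\tilde{\X}\|+1$, so the conditioning lemmas still apply. Running the algorithm of the previous paragraph on $\tilde{\X}$ with accuracy $\epsilon/3$ produces $\w_f$ with $\w_f^{\top}\tilde{\X}\w_f\ge \lambda_1(\tilde{\X})-\epsilon/3\ge \lambda_1(\X)-2\epsilon/3$, and then $\w_f^{\top}\X\w_f \ge \w_f^{\top}\tilde{\X}\w_f - \epsilon/3 \ge \lambda_1(\X)-\epsilon$.

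Finally, the cost of running the algorithm on $\tilde{\X}$ is bounded by substituting the new data size $\tilde{N}=n'\cdot S = \tilde{O}(S/\epsilon^2)$ into the bound $\tilde{O}(\tilde{N}+(d+S)/\epsilon^2)$, yielding the desired $\tilde{O}((d+S)/\epsilon^2)$ total running time. A union bound over the $\tilde{O}(1)$ invocations of $\mA$ and the concentration event controls the overall failure probability. The main obstacle, as in the sparse case, is ensuring that the dense SVRG oracle handles the (possibly nonconvex) individual summands $\frac12\z^{\top}(\lambda\I-\A_i)\z-\w^{\top}\z$ correctly with only mild degradation in the condition number; this is exactly what Theorem \ref{thm:svrg4DensePCA} (and the appendix result on SVRG for nonconvex summands with strongly convex sum) is designed to provide, so once invoked the rest of the argument is a direct transcription of the gap-free proof.
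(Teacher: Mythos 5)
Your proposal is correct and follows essentially the same route as the paper, which proves Theorem \ref{thm:DensePCA} in one line by plugging the dense-SVRG runtime of Theorem \ref{thm:svrg4DensePCA} into the proof of Theorem \ref{thm:main:gapfree} (i.e., the gap-free analysis of Algorithm \ref{alg:convexEigVal} plus the matrix-concentration subsampling that replaces $N$, here by $\tilde{O}(S/\epsilon^2)$). Your explicit handling of the $p$-weighted sampling and of the possible non-PSDness of $\tilde{\X}$ (shifting by $\I$) matches the remarks the paper makes in Section \ref{sec:sdpApp}, just spelled out in more detail.
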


\subsection{Faster sublinear-time SDP algorithm}

Here we detail a specific application of Theorem \ref{thm:DensePCA} to accelerate the sublinear-time approximation algorithm for Semidefinite Programming recently proposed by Garber and Hazan \cite{GHsubSDP15}.
Towards this end we consider the following semidefinite optimization problem:
\begin{eqnarray}\label{eq:sdpopt}
\max_{\W: \, \W \succeq 0, \, \textrm{Trace}(\W)=1}\,\min_{i\in[n]}\W \bullet \A_i - b_i,
\end{eqnarray}

where we assume that $\forall i\in[n]$: $\A_i$ is symmetric, $\Vert{\A_i}\Vert \leq 1$, $\Vert{\A_i}\Vert_F \leq F$ for some $F$, and $\vert{b_i}\vert \leq 1$. Note that under the assumption $\Vert{\A_i}\Vert \leq 1$ it holds that $F \leq \sqrt{d}$. We use $\A\bullet \B$ to denote the dot product $\sum_{i,j}\A_{i,j}\B_{i,j}$.

As before we denote by $N$ the total number of non-zero entries the matrices $\A_1,...,\A_n$ and by $S$ the maximal number of non-zero in any single matrix.

To the best of our knowledge the algorithm in \cite{GHsubSDP15} is the current state-of-the-art for approximating Problem  \eqref{eq:sdpopt} for a wide regime of the input parameters $d,n,\epsilon$.

\begin{theorem}[Theorem 1 in \cite{GHsubSDP15}]\label{thm:GHSDP}
There exists an algorithm that finds in total time $$\tilde{O}\left({\frac{1}{\epsilon^2}\left({mF^2 + \min\{\frac{S}{\epsilon^{2.5}},\, \frac{N}{\sqrt{\epsilon}}\}}\right)}\right)$$
a solution $\W_f$ such that with probability at least $1/2$ it holds that
$$\min_{i\in[n]}\W_f \bullet \A_i - b_i \geq \max_{\W: \, \W \succeq 0, \, \textrm{Trace}(\W)=1}\,\min_{i\in[n]}\W \bullet \A_i - b_i - \epsilon .$$ 
\end{theorem}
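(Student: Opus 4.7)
The plan is to follow the primal-dual framework of \cite{GHsubSDP15}. First, I would recast Problem \eqref{eq:sdpopt} as the bilinear saddle point
\[
\max_{\W \in \Ccal}\, \min_{\bp \in \Delta_n} \sum_{i=1}^n p_i\,(\W \bullet \A_i - b_i),
\]
where $\Ccal = \{\W \succeq 0,\; \textrm{Trace}(\W) = 1\}$ is the spectrahedron and $\Delta_n$ is the $n$-simplex. I would then run a multiplicative-weights scheme on the dual variable $\bp$ for $T = \tilde{O}(\epsilon^{-2})$ outer iterations. On iteration $t$: (i) compute a primal best response $\W^{(t)} = \w^{(t)}(\w^{(t)})^{\top}$ where $\w^{(t)}$ approximately maximizes $\w^\top (\sum_i p_i^{(t)} \A_i) \w$ up to additive error $O(\epsilon)$, and (ii) update $\bp^{(t+1)}$ multiplicatively using (approximate) values of $c_i^{(t)} := \W^{(t)} \bullet \A_i - b_i$. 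The returned $\W_f := \frac{1}{T}\sum_t \W^{(t)}$ then satisfies the claimed approximation guarantee via the standard MW regret bound, provided that the per-iteration errors from (i) and (ii) remain $O(\epsilon)$.

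Two randomized accelerations yield the sublinear per-iteration complexity. For the dual update, I would avoid the $O(N)$ cost of computing $\W^{(t)} \bullet \A_i$ exactly by forming unbiased estimators via $\ell_2$-sampling of the entries of $\A_i$ according to the distribution induced by $\W^{(t)}$ (which is rank-one). The Frobenius bound $\Vert\A_i\Vert_F \leq F$ controls the variance of each sample, so a Bernstein / Freedman argument shows that $O(F^2)$ samples per constraint suffice to keep the induced error at $O(\epsilon)$; summed over the $n$ (or $m$) constraints this accounts for the $mF^2$ term. For the eigenvector computation in step (i), I would use whichever of the following is faster on the input: classical Lanczos on the matrix $\sum_i p_i^{(t)} \A_i$, costing $\tilde{O}(N/\sqrt{\epsilon})$; or alternatively, drawing $\tilde{O}(\epsilon^{-2})$ matrices $\A_i$ i.i.d.\ from $\bp^{(t)}$ and applying Lanczos to their empirical average, which has at most $\tilde{O}(S/\epsilon^2)$ non-zero entries and therefore takes $\tilde{O}(S/\epsilon^{2.5})$ time (the matrix Hoeffding inequality of \cite{Tropp12} certifies that such a sample approximates the leading eigenvalue within $\epsilon$). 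Taking the faster of the two yields the $\min\{N/\sqrt{\epsilon},\, S/\epsilon^{2.5}\}$ term.

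Multiplying $T = \tilde{O}(\epsilon^{-2})$ by the per-iteration cost $\tilde{O}\bigl(mF^2 + \min\{N/\sqrt{\epsilon},\, S/\epsilon^{2.5}\}\bigr)$ then gives the stated complexity. The main technical obstacle is the probabilistic bookkeeping across iterations: one must show that the noise introduced by the sublinear dual sampling and by the approximate eigenvector computations aggregates benignly over all $T$ outer rounds. This is handled by a martingale concentration argument on the MW regret (the noisy estimators $\hat{c}_i^{(t)}$ form a bounded-variance martingale difference sequence), combined with a union bound showing that both noise sources remain $O(\epsilon)$-close to their exact counterparts throughout the run with probability at least $1/2$, which is precisely the promised success probability.
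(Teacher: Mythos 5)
The paper does not actually prove this statement---it is quoted verbatim as Theorem 1 of \cite{GHsubSDP15}, with only a one-paragraph description of that algorithm following it---and your sketch (multiplicative weights on the dual simplex with $\ell_2$-sampling estimators whose variance is controlled by $\Vert{\A_i}\Vert_F \leq F$, an $O(\epsilon)$-approximate leading-eigenvector best response to $\sum_i p_i^{(t)}\A_i$ computed by either full Lanczos in $\tilde{O}(N/\sqrt{\epsilon})$ or sampled Lanczos in $\tilde{O}(S/\epsilon^{2.5})$ time, $\tilde{O}(\epsilon^{-2})$ outer rounds, and martingale concentration to absorb the estimation noise into the regret) matches both that description and the structure of the cited proof. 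So your reconstruction follows essentially the same route as the source, and I see no gap at this level of detail.
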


The algorithm performs roughly $\epsilon^{-2}$ iterations where each iteration is comprised of two main parts: i) low-variance estimation of the products $\A_i \bullet \w\w^{\top}$ $\forall i\in[n]$ for some unit vector $\w\in\reals^d$, which are used to obtain a probability distribution $p\in\reals^n$ over the functions $\{f_i(\W):=\A_i \bullet \W - b_i \}_{i=1}^n$, and ii) an $O(\epsilon)$ - approximated leading eigenvalue computation of the matrix $\sum_{i=1}^np_i\A_i$ where $p$ is a distribution as discussed above. The term on the right in the running time stated in Theorem \ref{thm:GHSDP} comes from this approximated eigenvalue computation which is done according to either the standard \textit{Lanczos method} or the \textit{Sample Lanczos method} detailed in Table \ref{table:prevwork:gapfree}. By replacing the Sample Lanczos procedure with Theorem \ref{thm:DensePCA} we arrive at the following improved Theorem.

\begin{theorem}[Accelerated sublinear SDP solver]
There exists an algorithm that finds in total time $$\tilde{O}\left({\frac{1}{\epsilon^2}\left({mF^2 + \min\{\frac{S}{\epsilon^{2}},\, \frac{N}{\sqrt{\epsilon}}\}}\right)}\right)$$
a solution $\W_f$ such that with probability at least $1/2$ it holds that
$$\min_{i\in[n]}\W_f \bullet \A_i - b_i \geq \max_{\W: \, \W \succeq 0, \, \textrm{Trace}(\W)=1}\,\min_{i\in[n]}\W \bullet \A_i - b_i - \epsilon .$$ 
\end{theorem}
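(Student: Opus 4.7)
The plan is to follow the algorithmic framework of \cite{GHsubSDP15} verbatim and replace only the eigenvalue-approximation subroutine with the one provided by Theorem \ref{thm:DensePCA}. The paragraph immediately preceding the theorem tells us exactly what we need: the algorithm of \cite{GHsubSDP15} runs for $\tilde O(\epsilon^{-2})$ outer iterations, each of which has two components---(i) low-variance estimation of the inner products $\A_i \bullet \w\w^{\top}$ used to maintain the sampling distribution $p\in\reals^n$, contributing the additive $mF^2/\epsilon^2$ term; and (ii) an $O(\epsilon)$-accurate leading eigenvalue (equivalently, top eigenvector) computation for the matrix $\M_p := \sum_{i=1}^n p_i \A_i$, which is precisely the bottleneck to be accelerated.

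First I would observe that $\M_p$ is a convex combination $\sum p_i \A_i$ with $p_i \geq 0$, $\sum p_i = 1$, and $\|\A_i\|\le 1$, which is exactly the input model of dense PCA treated in Section \ref{sec:sdpApp}. Thus Theorem \ref{thm:DensePCA} applies and delivers an additive $\epsilon$-approximation to $\lambda_1(\M_p)$ (together with a near-optimal unit vector $\w$) in time $\tilde O((d+S)/\epsilon^2)$. This is a factor of $\sqrt{\epsilon}$ cheaper than the Sample Lanczos cost of $\tilde O(S/\epsilon^{2.5})$ used in Theorem \ref{thm:GHSDP}, because Sample Lanczos needs to form an $\tilde O(\epsilon^{-2})$-size sample of total sparsity $\tilde O(S/\epsilon^2)$ and then pay an additional $O(\epsilon^{-1/2})$ Lanczos-pass factor, while our dense-PCA algorithm eliminates the extra pass factor.

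Multiplying the per-iteration eigenvector cost $\tilde O((d+S)/\epsilon^2)$ by the $\tilde O(\epsilon^{-2})$ outer iterations gives the contribution $\tilde O(S/\epsilon^4)$ (absorbing the $d$ into $S$ or into the $mF^2/\epsilon^2$ term as needed, since $F^2\le d$). The alternative $\tilde O(N/\sqrt{\epsilon})$ Lanczos branch is unchanged because for very dense problems it is already the best choice; we only need to replace the Sample Lanczos branch by the dense-PCA routine, and taking the minimum of the two running times yields the claimed $\min\{S/\epsilon^2,\, N/\sqrt{\epsilon}\}/\epsilon^2$ inside the final bound.

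The only subtlety I anticipate is that Theorem \ref{thm:DensePCA} assumes $\M_p \succeq 0$, while in the SDP setting the individual $\A_i$ need not be positive semidefinite, so neither must $\M_p$. This is handled by the standard cosmetic shift: run the dense-PCA algorithm on $\tilde\M_p := \M_p + \I$ instead, which is PSD since $\|\A_i\|\le 1$ implies $\M_p \succeq -\I$; the shift preserves eigenvectors and translates eigenvalues by $1$, so an additive $\epsilon$-approximation of $\lambda_1(\tilde\M_p)$ immediately yields an additive $\epsilon$-approximation of $\lambda_1(\M_p)$, exactly the guarantee required by the outer framework of \cite{GHsubSDP15}. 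With this adjustment the success probability, correctness analysis, and first-term accounting of Theorem \ref{thm:GHSDP} carry through unchanged, producing the stated bound.
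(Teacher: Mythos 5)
Your proposal matches the paper's proof essentially verbatim: the paper obtains this theorem by running the algorithm of \cite{GHsubSDP15} unchanged and substituting Theorem \ref{thm:DensePCA} for the Sample Lanczos eigenvalue subroutine, handling the lack of positive semidefiniteness by the same shift to $\sum_{i=1}^n p_i\A_i + \I$. The only cosmetic difference is your side remark about absorbing the $d$ term (note $F^2\le d$ points the wrong way for that particular absorption), but the paper's own statement performs the same simplification from $(d+S)/\epsilon^2$ to $S/\epsilon^2$ without comment, so nothing essential is missing from your argument.
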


A slight technical issue is that the queried matrix $\X=\sum_{i=1}^np_i\A_i$ need not be positive semidefinite as we assumed in our results, however under our assumptions we can easily apply our results to the matrix $\tilde{\X}=\sum_{i=1}^np_i\A_i + \I$ which is positive semidefinite, which only slightly affects the leading constants in our theorems.

\bibliographystyle{plain}
\bibliography{bib}

\appendix

\section{Convergence of the Power Method}

We first restate the theorem and then prove it.
\begin{theorem}
Let $\M$ be a positive definite matrix and denote its eigenvalues in descending order by $\lambda_1, \lambda_2,...,\lambda_d$, and let $\u_1,\u_2,...,\u_d$ denote the corresponding eigenvectors. Denote $\delta = \lambda_1 - \lambda_2$ and $\kappa = \frac{\lambda_1}{\delta}$. Fix an error tolerance $\epsilon > 0$ and failure probability $p > 0$. Define:
\begin{eqnarray*}
\TpmCrude(\epsilon,p) = \lceil{\frac{1}{\epsilon}\ln\left({\frac{18d}{p^2\epsilon}}\right)}\rceil,  \qquad
\TpmAcc(\kappa,\epsilon,p) = \lceil{\frac{\kappa}{2}\ln\left({\frac{9d}{p^2\epsilon}}\right)}\rceil .
\end{eqnarray*}
Then, with probability $1-p$ it holds that
\begin{enumerate}
\item (crude regime) $\forall t \geq \TpmCrude(\epsilon,p)$: $\w_t^{\top}\M\w_t \geq (1-\epsilon)\lambda_1$.
\item (accurate regime)  $\forall t \geq \TpmAcc(\kappa,\epsilon,p)$: $(\w_t^{\top}\u_1)^2 \geq 1-\epsilon$.
\end{enumerate}
In both cases, the success probability depends only on the random variable $(\w_0^{\top}\u_1)^2$.
\end{theorem}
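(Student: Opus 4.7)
The plan is the standard spectral decomposition argument with a quantitative random initialization bound, separated into the two regimes. Write $\w_0 = \sum_{i=1}^d \alpha_i \u_i$ with $\sum_i \alpha_i^2 = 1$. Since scaling commutes with eigendecomposition and normalization, induction on $t$ yields $\w_t = (\sum_i \alpha_i \lambda_i^t \u_i)/\|\sum_i \alpha_i \lambda_i^t \u_i\|$, so
\[
(\w_t^{\top}\u_i)^2 \;=\; \frac{\alpha_i^2 \lambda_i^{2t}}{\sum_{j=1}^d \alpha_j^2 \lambda_j^{2t}}, \qquad \w_t^{\top}\M\w_t \;=\; \frac{\sum_j \alpha_j^2 \lambda_j^{2t+1}}{\sum_j \alpha_j^2 \lambda_j^{2t}}.
\]
The first step is to isolate the random event that controls everything: since $\w_0$ is drawn uniformly on the sphere, a standard anti-concentration bound for the first coordinate of a uniform vector gives that $\alpha_1^2 = (\w_0^{\top}\u_1)^2 \geq p^2/(cd)$ with probability at least $1-p$ for an explicit constant $c$ (one can get $c = 9/2$ or similar via a Gaussian representation and a tail bound on $\chi^2$ densities). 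I record this event $\Ecal$ and condition on it throughout; note this event depends only on $(\w_0^{\top}\u_1)^2$, matching the last sentence of the theorem.

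For the \textbf{accurate regime}, bound the denominator from below by $\alpha_1^2 \lambda_1^{2t}$ and the numerator of the complementary mass by $(\sum_{i\geq 2}\alpha_i^2)\lambda_2^{2t} \leq \lambda_2^{2t}$, giving
\[
1 - (\w_t^{\top}\u_1)^2 \;\leq\; \frac{\lambda_2^{2t}}{\alpha_1^2 \lambda_1^{2t}} \;\leq\; \frac{cd}{p^2}\Bigl(\frac{\lambda_2}{\lambda_1}\Bigr)^{2t}.
\]
Setting this $\leq \epsilon$ and using $\ln(\lambda_1/\lambda_2) = -\ln(1 - \delta/\lambda_1) \geq \delta/\lambda_1 = 1/\kappa$, the required $t$ is at most $\lceil (\kappa/2)\ln(9d/(p^2\epsilon))\rceil = \TpmAcc(\kappa,\epsilon,p)$, with the constant $9$ absorbing $c$.

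For the \textbf{crude regime}, rather than requiring collapse onto $\u_1$, I split the spectrum at $(1-\epsilon/2)\lambda_1$. Define $S = \{i : \lambda_i < (1-\epsilon/2)\lambda_1\}$ and bound the ``mass leakage''
\[
\sum_{i \in S}(\w_t^{\top}\u_i)^2 \;\leq\; \frac{\sum_{i\in S}\alpha_i^2 \lambda_i^{2t}}{\alpha_1^2\lambda_1^{2t}} \;\leq\; \frac{1}{\alpha_1^2}(1-\epsilon/2)^{2t} \;\leq\; \frac{cd}{p^2}e^{-\epsilon t},
\]
which is at most $\epsilon/2$ once $t \geq (1/\epsilon)\ln(2cd/(p^2\epsilon)) = \TpmCrude(\epsilon,p)$ (again absorbing constants into the logarithmic factor $18$). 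On the complement of $S$ every eigenvalue is $\geq (1-\epsilon/2)\lambda_1$, so
\[
\w_t^{\top}\M\w_t \;\geq\; (1-\epsilon/2)\lambda_1\cdot\Bigl(1 - \sum_{i\in S}(\w_t^{\top}\u_i)^2\Bigr) \;\geq\; (1-\epsilon/2)^2\lambda_1 \;\geq\; (1-\epsilon)\lambda_1.
\]
Both guarantees hold on the single event $\Ecal$, which depends only on $(\w_0^{\top}\u_1)^2$, completing the proof. The main technical nuisance I expect is pinning down the constant $c$ in the anti-concentration estimate for $(\w_0^{\top}\u_1)^2$ (this is where the explicit constants $9$ and $18$ appearing in the theorem originate), and making sure the slack from $(1-\epsilon/2)^2 \geq 1-\epsilon$ in the crude regime is carried cleanly through the logarithm; the rest is routine algebra on ratios of eigenvalue powers.
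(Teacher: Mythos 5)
Your proposal is correct and follows essentially the same route as the paper's proof: spectral expansion of $\w_t$ in the eigenbasis, the $(\w_0^{\top}\u_1)^2 \geq p^2/(9d)$ anti-concentration event for a random unit vector, a spectral split at $(1-\epsilon/2)\lambda_1$ with a $(1-\epsilon/2)^2 \geq 1-\epsilon$ slack for the crude regime, and the ratio $(\lambda_2/\lambda_1)^{2t}$ together with $\ln(\lambda_1/\lambda_2)\geq 1/\kappa$ for the accurate regime. The only loose end is the exact anti-concentration constant (the paper takes $p^2/(9d)$, citing Lemma 5 of Arora--Rao--Vazirani), which you correctly flag as the source of the constants $9$ and $18$.
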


\begin{proof}

By the update rule of Algorithm \ref{alg:powerm}, it holds for all $i\in[d]$ that

\begin{eqnarray}\label{eq:pm1}
(\w_t^{\top}\u_i)^2 &=& \frac{\left({(\M^t\w_0)^{\top}\u_i}\right)^2}{\Vert{\M^t\w_0}\Vert^2}
= \frac{(\w_0^{\top}\M^t\u_i)^2}{\w_0^{\top}\M^{2t}\w_0} = \frac{(\lambda_i^t\w_0^{\top}\u_i)^2}{\sum_{j=1}^d\lambda_j^{2t}(\w_0^{\top}\u_j)^2} 
= \frac{(\w_0^{\top}\u_i)^2}{\sum_{j=1}^d\left({\frac{\lambda_j}{\lambda_i}}\right)^{2t}(\w_0^{\top}\u_j)^2} \nonumber \\
& \leq & \frac{(\w_0^{\top}\u_i)^2}{\left({\frac{\lambda_1}{\lambda_i}}\right)^{2t}(\w_0^{\top}\u_1)^2}
 =  \frac{(\w_0^{\top}\u_i)^2}{(\w_0^{\top}\u_1)^2}\left({\frac{\lambda_i}{\lambda_1}}\right)^{2t} 
.
\end{eqnarray}

Since $\w_0$ is a random unit vector, according to Lemma 5 in \cite{AroraRV09}, it holds that with probability at least $1-p$, $(\w_0^{\top}\u_1)^2 \geq \frac{p^2}{9d}$. Thus we have that with probability at least $1-p$ it holds for all $i\in[d]$ that 

\begin{eqnarray}\label{eq:pm2}
(\w_t^{\top}\u_i)^2 \leq (\w_0^{\top}\u_i)^2\frac{9d}{p^2}\left({1 - \frac{\lambda_1-\lambda_i}{\lambda_1}}\right)^{2t}
\leq (\w_0^{\top}\u_i)^2\frac{9d}{p^2}\cdot\exp\left({-2\frac{\lambda_1-\lambda_i}{\lambda_1}t}\right) .
\end{eqnarray}

Given $\epsilon\in(0,1)$, define $S(\epsilon) = \lbrace{i\in[d] \, | \, \lambda_i > (1-\epsilon)\lambda_1}\rbrace$. Fix now $\epsilon_1,\epsilon_2\in(0,1)$ and define 
\begin{eqnarray*}
T(\epsilon_1,\epsilon_2,p) := \lceil{\frac{1}{2\epsilon_1}\ln\left({\frac{9d}{p^2}\cdot\frac{1}{\epsilon_2}}\right)}\rceil.
\end{eqnarray*}

According to Eq. \eqref{eq:pm2}, with probability at least $1-p$ we have that for $t \geq T(\epsilon_1,\epsilon_2,p)$, for all $i\notin S(\epsilon_1)$ it holds that $(\w_t^{\top}\u_i)^2 \leq \epsilon_2(\w_0^{\top}\u_i)^2$, and thus in particular it holds that $\sum_{i\in S(\epsilon_1)}(\w_t^{\top}\u_i)^2 \geq 1- \epsilon_2$.

Part one of the theorem now follows by noticing that according to the above, by setting $\epsilon_1 = \epsilon_2 = \epsilon/2$ we have that with probability at least $1-p$, for $t\geq \TpmCrude(\epsilon,p) = T(\epsilon/2,\epsilon/2,p)$, it holds  that
\begin{eqnarray*}
\w_t^{\top}\M\w_t = \sum_{i=1}^d\lambda_i(\w_t^{\top}\u_i)^2 \geq \sum_{i \in S(\epsilon/2)}(1-\epsilon/2)\lambda_1(\w_t^{\top}\u_i)^2 \geq (1-\epsilon/2)^2\lambda_1 > (1-\epsilon)\lambda_1.
\end{eqnarray*}

For the second part of the theorem, note that $S(\delta/\lambda_1) = \{1\}$. Thus we have that with probability at least $1-p$, for all $t\geq \TpmAcc(\lambda_1/\delta,\epsilon,p) = T(\delta/\lambda_1,\epsilon,p)$, it holds  that $(\w_t^{\top}\u_1) \geq 1-\epsilon$.

\end{proof}

\section{SVRG for Convex Functions given by Sums of Non-convex Functions and its Acceleration}\label{sec:svrg_nonconvex}

Suppose we want to minimize a function $F(x)$ that admits the following structure
\begin{eqnarray}\label{eq:svrgprob}
F(\x) = \frac{1}{n}\sum_{i=1}^nf_i(\x), 
\end{eqnarray}
where each $f_i$ is $\beta$ smooth, i.e.
\begin{eqnarray*}
\Vert{\nabla{}f_i(\x) - \nabla{}f_i(\y)}\Vert \leq \beta\Vert{\x-\y}\Vert \quad \forall{\x,\y\in\mathbb{R}^d} ,
\end{eqnarray*}
and $F(\x)$ is $\sigma$-strongly convex, i.e.
\begin{eqnarray*}
F(\y) \leq F(\x) + (\y-\x)^{\top}\nabla{}F(\x) + \frac{\sigma}{2}\Vert{\x-\y}\Vert^2 \quad \forall{\x,\y\in\mathbb{R}^d} .
\end{eqnarray*}

\begin{algorithm}[H]
\caption{\textsc{SVRG}}
\label{alg:svrg}
\begin{algorithmic}[1]
\STATE Input: $\tilde{\x}_0, \eta, m$
\FOR{$s=1,2,...$}
\STATE $\tilde{\x} \gets \tilde{\x}_{s-1}$
\STATE $\tilde{\bmu} \gets \nabla{}F(\tilde{\x})$
\STATE $\x_0 \gets \tilde{\x}$
\FOR{$t=1,2,...,m$}
\STATE Randomly pick $i_t\in[n]$
\STATE $\x_t \gets \x_{t-1} - \eta\left({\nabla{}f_{i_t}(\x_{t-1})-\nabla{}f_{i_t}(\tilde{\x}) + \tilde{\bmu}}\right)$ 
\ENDFOR
\STATE $\tilde{\x}_s \gets \frac{1}{m}\sum_{t=0}^{m-1}\x_t$
\ENDFOR
\end{algorithmic}
\end{algorithm}

\begin{theorem}\label{thm:svrg_nonconvex}
Suppose that each function $f_i(\x)$ in the objective \eqref{eq:svrgprob} is $\beta$-smooth and that $F(\x)$ is $\sigma$-strongly convex. Then for $\eta =\frac{\sigma}{7\beta^2}$ and $m \geq \frac{1}{2\eta^2\beta^2}$ it holds that
\begin{eqnarray*}
\E[\Vert{\tilde{\x}_s - \x^*}\Vert^2] \leq 2^{-s}\Vert{\tilde{\x}_0 - \x^*}\Vert^2 .
\end{eqnarray*}
\end{theorem}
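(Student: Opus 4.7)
The plan is to track $\delta_t := \E\|\x_t - \x^*\|^2$ inside a single outer epoch (where $\tilde{\x}$ is frozen) and establish a one-epoch contraction $\E\|\tilde{\x}_s - \x^*\|^2 \leq \tfrac{1}{2}\|\tilde{\x}_{s-1} - \x^*\|^2$; the stated geometric rate then follows by induction on $s$. First I would verify that the SVRG direction $g_t := \nabla f_{i_t}(\x_{t-1}) - \nabla f_{i_t}(\tilde{\x}) + \tilde{\bmu}$ is unbiased for $\nabla F(\x_{t-1})$. The crucial departure from the Johnson--Zhang analysis is that, without convexity of the individual $f_i$, one cannot bound $\E\|g_t\|^2$ by a function-value gap of the form $\beta(F(\x)-F(\x^*))$. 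Instead I would use only the Lipschitz continuity of each $\nabla f_i$: $\E_{i_t}\|\nabla f_{i_t}(\x_{t-1}) - \nabla f_{i_t}(\tilde{\x})\|^2 \leq \beta^2\|\x_{t-1}-\tilde{\x}\|^2$, and decompose
\[
\E\|g_t\|^2 \;=\; \|\nabla F(\x_{t-1})\|^2 \;+\; \E\|g_t - \nabla F(\x_{t-1})\|^2.
\]
Bounding the variance by the second moment of $\nabla f_{i_t}(\x_{t-1}) - \nabla f_{i_t}(\tilde{\x})$, using $\beta$-smoothness of $F$ itself together with $\nabla F(\x^*)=0$ to get $\|\nabla F(\x_{t-1})\|^2 \leq \beta^2\|\x_{t-1}-\x^*\|^2$, and finally $\|\x_{t-1}-\tilde{\x}\|^2 \leq 2\|\x_{t-1}-\x^*\|^2 + 2\|\tilde{\x}-\x^*\|^2$, yields
\[
\E\|g_t\|^2 \;\leq\; 3\beta^2\|\x_{t-1}-\x^*\|^2 + 2\beta^2\|\tilde{\x}-\x^*\|^2.
\]

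Next I expand $\x_t = \x_{t-1} - \eta g_t$ into $\|\x_t-\x^*\|^2 = \|\x_{t-1}-\x^*\|^2 - 2\eta\langle g_t, \x_{t-1}-\x^*\rangle + \eta^2\|g_t\|^2$, take conditional expectation over $i_t$, and invoke the standard consequence of $\sigma$-strong convexity of $F$, $\langle\nabla F(\x_{t-1}),\x_{t-1}-\x^*\rangle \geq \sigma\|\x_{t-1}-\x^*\|^2$. Combined with the variance bound, this gives a linear recursion
\[
\delta_t \;\leq\; (1-c_1)\,\delta_{t-1} + c_2\,\|\tilde{\x}-\x^*\|^2, \quad c_1 := 2\eta\sigma - 3\eta^2\beta^2,\ \ c_2 := 2\eta^2\beta^2.
\]
Unrolling from $\delta_0 = \|\tilde{\x}-\x^*\|^2$ and summing a geometric series produces $\delta_t \leq \bigl[(1-c_1)^t + c_2/c_1\bigr]\|\tilde{\x}-\x^*\|^2$.

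Finally, since $\tilde{\x}_s$ is the average of $\x_0,\ldots,\x_{m-1}$, convexity of $\|\cdot\|^2$ gives $\E\|\tilde{\x}_s-\x^*\|^2 \leq \frac{1}{m}\sum_{t=0}^{m-1}\delta_t \leq \bigl(\tfrac{1}{m c_1} + \tfrac{c_2}{c_1}\bigr)\|\tilde{\x}_{s-1}-\x^*\|^2$. Substituting $\eta = \sigma/(7\beta^2)$ gives $c_1 = 11\sigma^2/(49\beta^2)$ and $c_2 = 2\sigma^2/(49\beta^2)$, whence $c_2/c_1 = 2/11$; the choice $m \geq 1/(2\eta^2\beta^2) = 1/c_2$ then forces $1/(mc_1) \leq c_2/c_1 = 2/11$, so the overall contraction factor is at most $4/11 < 1/2$, yielding per-epoch halving and, by induction on $s$, the claimed $2^{-s}$ rate. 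The main obstacle I foresee is the tight tuning of constants: a larger $\eta$ would drive $c_1$ to zero (or negative) while a smaller one would force $m$ to blow up, so the prescribed $\eta = \sigma/(7\beta^2)$ and $m \geq 1/(2\eta^2\beta^2)$ are precisely balanced so that both $c_2/c_1$ and $1/(mc_1)$ sit safely below $1/4$. The conceptual subtlety throughout is that the variance estimate must remain purely in terms of squared distance rather than function-gap, which is what ultimately produces the $\beta^2/\sigma^2$ (rather than $\beta/\sigma$) dependence in the inner-loop length.
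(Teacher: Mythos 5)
Your proof is correct, and its skeleton coincides with the paper's: the same unbiased variance-reduced estimator, a second-moment bound expressed purely in squared distances (never function-value gaps, exactly because the individual $f_i$ need not be convex), the strong-convexity inequality $\langle\nabla F(\x_{t-1}),\x_{t-1}-\x^*\rangle\geq\sigma\Vert\x_{t-1}-\x^*\Vert^2$ for the drift term, and Jensen applied to the epoch average $\tilde{\x}_s$. You diverge in two technical choices. For the second moment you use the mean--variance split $\E\Vert g_t\Vert^2=\Vert\nabla F(\x_{t-1})\Vert^2+\E\Vert g_t-\nabla F(\x_{t-1})\Vert^2$, bound the variance by the uncentered moment of $\nabla f_{i_t}(\x_{t-1})-\nabla f_{i_t}(\tilde{\x})$ and the mean via $\beta$-smoothness of $F$, getting $3\beta^2\Vert\x_{t-1}-\x^*\Vert^2+2\beta^2\Vert\tilde{\x}-\x^*\Vert^2$; the paper instead centers both gradient differences at $\x^*$, applies $\Vert a+b\Vert^2\leq 2\Vert a\Vert^2+2\Vert b\Vert^2$ together with the same variance trick, and obtains the marginally sharper $2\beta^2(\Vert\x_{t-1}-\x^*\Vert^2+\Vert\tilde{\x}-\x^*\Vert^2)$ --- both constants comfortably fit the prescribed $\eta=\sigma/(7\beta^2)$ and $m\geq 1/(2\eta^2\beta^2)$ (your per-epoch factor is $4/11$, the paper's is $1/3$, and each is below $1/2$). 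For the epoch aggregation you unroll the one-step recursion $\delta_t\leq(1-c_1)\delta_{t-1}+c_2\Vert\tilde{\x}-\x^*\Vert^2$ and sum a geometric series, which makes the role of $m$ transparent through the $1/(mc_1)$ term; the paper instead sums the one-step inequality over the whole epoch, telescopes the distance terms, and rearranges, which avoids any requirement that $c_1\leq 1$. That condition is the only (trivial) omission in your write-up: the geometric-series bound $\sum_{t\geq 0}(1-c_1)^t\leq 1/c_1$ needs $0<c_1\leq 1$, which holds since $F$ is itself $\beta$-smooth, hence $\sigma\leq\beta$ and $c_1=11\sigma^2/(49\beta^2)\leq 11/49$.
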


\begin{proof}
We begin by analyzing the reduction in error on a single epoch $s$ and then apply the result recursively. Let us fix an iteration $t\in[m]$ of the inner loop in epoch $s$. In the sequel we denote by $\E_t[\cdot]$ the expectation with respect to the random choice of $i_t$ (i.e., the expectation is conditioned on all randomness introduced up to the $t$th iteration of the inner loop during epoch $s$).
Define 
\begin{eqnarray*}
\v_t = \nabla{}f_{i_t}(\x_{t-1}) - \nabla{}f_{i_t}(\tilde{\x}) + \tilde{\bmu}.
\end{eqnarray*}
Note that $\E_t[\v_t] = \nabla{}F(\x_{t-1})$ and thus $\v_t$ is an unbiased estimator for $\nabla{}F(\x_{t-1})$. We continue to upper bound the variance of $\v_t$ in terms of the distance of $\x_{t-1}$ and $\tilde{\x}$ from $\x^*$.
\begin{eqnarray*}
\E_t[\Vert{\v_t}\Vert^2] &\leq & 2\E_t[\Vert{\nabla{}f_{i_t}(\x_{t-1}) - \nabla{}f_{i_t}(\x^*)}\Vert^2] + 2\E_t[\Vert{\nabla{}f_{i_t}(\tilde{\x}) - \nabla{}f_{i_t}(\x^*) - \nabla{}F(\tilde{\x})}\Vert^2] \\
&=& 2\E_t[\Vert{\nabla{}f_{i_t}(\x_{t-1}) - \nabla{}f_{i_t}(\x^*)}\Vert^2] + 2\E_t[\Vert{\nabla{}f_{i_t}(\tilde{\x}) - \nabla{}f_{i_t}(\x^*)}\Vert^2] \\
&-& 4\nabla{}F(\tilde{\x})^{\top}(\nabla{}F(\tilde{\x}) - \nabla{}F(\x^*)) + 2\Vert{\nabla{}F(\tilde{\x})}\Vert^2 \\
&\leq & 2\E_t[\Vert{\nabla{}f_{i_t}(\x_{t-1}) - \nabla{}f_{i_t}(\x^*)}\Vert^2] + 2\E_t[\Vert{\nabla{}f_{i_t}(\tilde{\x}) - \nabla{}f_{i_t}(\x^*)}\Vert^2]  \\
&\leq & 2\beta^2(\Vert{\x_{t-1}-\x^*}\Vert^2 + \Vert{\tilde{\x}-\x^*}\Vert^2),
\end{eqnarray*}
where the first inequality follows from $(a+b)^2 \leq 2a^2+2b^2$, the first equality follows since $\E_t[f_{i_t}(\tilde{\x})]=\nabla{}F(\tilde{\x})$ (same goes for $\x^*$), the second inequality follows since $\nabla{}F(\x^*)=0$, and the third inequality follows from smoothness of $f_{i_t}$.

We now have that,
\begin{eqnarray*}
\E_t[\Vert{\x_t - \x^*}\Vert^2] &=& \Vert{\x_{t-1} - \x^*}\Vert^2 - 2\eta(\x_{t-1}-\x^*)^{\top}\E_t[\v_t] + \eta^2\E_t[\Vert{\v_t}\Vert^2] \\
& \leq & \Vert{\x_{t-1} - \x^*}\Vert^2 - 2\eta(\x_{t-1}-\x^*)^{\top}\nabla{}F(\x_{t-1}) + 2\eta^2\beta^2(\Vert{\x_{t-1}-\x^*}\Vert^2 + \Vert{\tilde{\x}-\x^*}\Vert^2) \\
&\leq & \Vert{\x_{t-1} - \x^*}\Vert^2 - 2\eta\sigma\Vert{\x_{t-1} - \x^*}\Vert^2 + 2\eta^2\beta^2(\Vert{\x_{t-1}-\x^*}\Vert^2 + \Vert{\tilde{\x}-\x^*}\Vert^2), 
\end{eqnarray*}
where the second inequality follow from convexity and strong-convexity of $F$.

Thus we have that,
\begin{eqnarray*}
\E[\Vert{\x_t - \x^*}\Vert^2] - \E[\Vert{\x_{t-1} - \x^*}\Vert^2] \leq 2\eta(\eta\beta^2-\sigma)\E[\Vert{\x_{t-1}-\x^*}\Vert^2] + 2\eta^2\beta^2\E[\Vert{\tilde{\x}-\x^*}\Vert^2] .
\end{eqnarray*}

Summing over all iterations of the inner loop on epoch $s$ we have
\begin{eqnarray*}
\E[\Vert{\x_m - \x^*}\Vert^2] - \E[\Vert{\x_0 - \x^*}\Vert^2] \leq 2\eta(\eta\beta^2-\sigma)\sum_{t=1}^m\E[\Vert{\x_{t-1}-\x^*}\Vert^2] + 2m\eta^2\beta^2\E[\Vert{\tilde{\x} - \x^*}\Vert^2] .
\end{eqnarray*}

Rearranging and using $\x_0 = \tilde{\x}$ we have that,
\begin{eqnarray*}
2\eta(\sigma-\eta\beta^2)\sum_{t=1}^m\E[\Vert{\x_t-\x^*}\Vert^2] \leq (1+2m\eta^2\beta^2)\E[\Vert{\tilde{\x}-\x^*}\Vert^2] .
\end{eqnarray*}

Using $\tilde{\x} = \tilde{\x}_{s-1}$ and $\tilde{\x}_s = \frac{1}{m}\sum_{t=0}^{m-1}\x_t$ we have that,
\begin{eqnarray*}
\E[\Vert{\tilde{\x}_s-\x^*}\Vert^2] \leq \frac{1+2m\eta^2\beta^2}{2\eta{}m(\sigma-\eta\beta^2)}\E[\Vert{\tilde{\x}_{s-1}-\x^*}\Vert^2] .
\end{eqnarray*}
Plugging the values of $\eta,m$ gives the theorem.
\end{proof}

\subsection{Acceleration of Algorithm \ref{alg:svrg}}

We now discuss how using the recent generic acceleration framework of \cite{Harchaoui15}, we can further accelerate Algorithm \ref{alg:svrg}. Note that Algorithm \ref{alg:svrg} requires to compute overall $\tilde{O}(n+m)=\tilde{O}\left({\frac{\beta^2}{\sigma^2}+n}\right)$ gradients of functions from the set $\lbrace{f_i(\x)}\rbrace_{i=1}^n$. Using the framework of \cite{Harchaoui15}, this quantity could be dramatically reduced.

On a very high-level, the framework of \cite{Harchaoui15} applies a convex optimization algorithm in an almost black-box fashion in order to simulate an algorithm known as the \textit{Accelerated Proximal-point} algorithm. That is, it uses the convex optimization algorithm to find an approximated global minimizer of the modified function:
\begin{eqnarray}\label{eq:AccSvrgProb}
\tilde{F}(\x) = \frac{1}{n}\sum_{i=1}^nf_i(\x) + \frac{\kappa}{2}\Vert{\x-\y}\Vert^2, 
\end{eqnarray}
where $\kappa, \y$ are parameters.

Note that the SVRG algorithm (\ref{alg:svrg}) could be directly applied to minimize \eqref{eq:AccSvrgProb} by considering the set of functions $\tilde{f}_i(\x) = f_i(\x) + \frac{\kappa}{2}\Vert{\x-\y}\Vert^2$ for all $i\in[n]$. It clearly holds that $\tilde{F}(\x) = \sum_{i=1}^n\tilde{f}_i(\x)$. Not also that now $\tilde{F}(\x)$ is $\sigma+\kappa$ strongly convex and for each $i\in[n]$ it holds that $\tilde{f}_i(\x)$ is $\beta+\kappa$ smooth.

The following Theorem (rephrased for our needs) is proven in \cite{Harchaoui15} (Theorem 3.1).
\begin{theorem}\label{thm:AccSVRGconv}
Fix the parameter $\kappa$. There exists an acceleration scheme for Algorithm \ref{alg:svrg} that finds an $\epsilon$-approximated minimizer of \eqref{eq:svrgprob}, after approximately minimizing $\tilde{O}\left({\sqrt{\frac{\sigma+\kappa}{\sigma}}}\right)$ instances of \eqref{eq:AccSvrgProb}.
\end{theorem}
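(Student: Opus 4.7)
The plan is to implement a Catalyst-style accelerated proximal-point scheme, which is exactly what \cite{Harchaoui15} does. I would view the regularized subproblem \eqref{eq:AccSvrgProb} as an (approximate) evaluation of the proximal operator $\mathrm{prox}_{F/\kappa}(\y) := \arg\min_{\x}\{F(\x) + \frac{\kappa}{2}\Vert\x-\y\Vert^2\}$ and build around it Nesterov's accelerated gradient method applied to the Moreau envelope
\[
M_\kappa(\y) := \min_{\x}\left\{F(\x) + \frac{\kappa}{2}\Vert \x - \y\Vert^2\right\}.
\]
The outer loop maintains two sequences $(\x_k, \y_k)$ linked by Nesterov-style extrapolation; at step $k$ one sets $\x_k$ to be an approximate minimizer of \eqref{eq:AccSvrgProb} anchored at $\y_{k-1}$, and updates $\y_k$ via a momentum formula whose coefficients depend on $\sqrt{\sigma/(\sigma+\kappa)}$.

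The key structural facts I would establish first are: (i) $M_\kappa$ is $\kappa$-smooth and $\frac{\sigma\kappa}{\sigma+\kappa}$-strongly convex whenever $F$ is $\sigma$-strongly convex, (ii) the minimizer of $M_\kappa$ coincides with $\x^* = \arg\min F$, and (iii) $\nabla M_\kappa(\y) = \kappa(\y - \mathrm{prox}_{F/\kappa}(\y))$, so that one exact prox evaluation is exactly one gradient of $M_\kappa$. With these in hand, Nesterov's classical accelerated gradient analysis on $M_\kappa$ gives a contraction factor of $(1 - \sqrt{(\sigma\kappa/(\sigma+\kappa))/\kappa}) = (1 - \sqrt{\sigma/(\sigma+\kappa)})$, and therefore $\tilde{O}(\sqrt{(\sigma+\kappa)/\sigma})$ outer iterations suffice to reach $\epsilon$-accuracy.

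The main obstacle, and the reason we only have an approximate version of the statement, is handling the inexactness of the prox evaluation: our inner solver produces only approximate minimizers of \eqref{eq:AccSvrgProb}, hence only approximate values of $\nabla M_\kappa$. I would invoke an inexact accelerated proximal-point analysis in the style of \cite{Harchaoui15}, showing that if subproblem $k$ is solved to accuracy $\epsilon_k$ with $\epsilon_k$ decaying geometrically (at a rate tied to the accelerated contraction $(1 - \sqrt{\sigma/(\sigma+\kappa)})$), then the accelerated rate is preserved up to logarithmic factors. The delicate step is calibrating $\epsilon_k$: it must be small enough that the accumulated error introduced into Nesterov's extrapolation does not destroy acceleration (one typically needs the errors to be summable against the momentum weights), yet large enough that each subproblem is solved with only a polylogarithmic overhead per outer iteration. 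Since each outer iteration consumes exactly one approximate minimization of \eqref{eq:AccSvrgProb}, the total number of subproblem instances is $\tilde{O}(\sqrt{(\sigma+\kappa)/\sigma})$, yielding the claimed bound.
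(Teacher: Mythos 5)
Your outline is sound and matches the source the paper relies on: the paper does not prove this theorem itself but imports it as Theorem 3.1 of \cite{Harchaoui15}, whose Catalyst analysis is exactly the inexact accelerated proximal-point scheme you describe (outer Nesterov-style extrapolation around approximate minimizations of \eqref{eq:AccSvrgProb}, with inner accuracies $\epsilon_k$ decaying geometrically at the rate $1-\sqrt{\sigma/(\sigma+\kappa)}$), yielding $\tilde{O}\left({\sqrt{\frac{\sigma+\kappa}{\sigma}}}\right)$ subproblem instances. The only cosmetic difference is that \cite{Harchaoui15} controls the inexactness via an estimate-sequence argument rather than literally running inexact Nesterov on the Moreau envelope, but this does not change the contraction factor or the count of subproblems, so your approach is essentially the same.
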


Plugging Theorems \ref{thm:svrg_nonconvex}, \ref{thm:AccSVRGconv} and the proprieties of $\tilde{F}(\x)$ we have that Algorithm \ref{alg:svrg} could be applied to finding an $\epsilon$ minimizer of \eqref{eq:svrgprob} in total time:
\begin{eqnarray*}
\tilde{O}\left({\sqrt{\frac{\sigma+\kappa}{\sigma}}\left({T_G+\left({\frac{\beta+\kappa}{\sigma+\kappa}}\right)^2T_g}\right)}\right),
\end{eqnarray*}
where $T_G$ denotes the time to evaluate the gradient of $F$ and $T_g$ denotes the worst case time to evaluate the gradient of a single function $f_i$.

By optimizing the above bound with respect to $\kappa$ we arrive at the following theorem.

\begin{theorem}\label{thm:AccSVRGoptimized}
Assume that the gradient vector of each function $f_i(\x)$ could be computed in $O(d)$ time. Assume further that $\beta =\Omega(\sqrt{T_G/T_g}\sigma)$. Algorithm \ref{alg:svrg} combined with the acceleration framework of \cite{Harchaoui15}, finds an $\epsilon$-approximated minimizer of \eqref{eq:svrgprob} in total time $\tilde{O}\left({\sqrt{\frac{\beta}{\sigma}}T_G^{3/4}T_g^{1/4}}\right)$.
\end{theorem}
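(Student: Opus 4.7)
The plan is to simply optimize over the free parameter $\kappa$ the compound running-time bound
$$\tilde{O}\!\left(\sqrt{\tfrac{\sigma+\kappa}{\sigma}}\left(T_G + \left(\tfrac{\beta+\kappa}{\sigma+\kappa}\right)^2 T_g\right)\right)$$
that comes from combining Theorem~\ref{thm:svrg_nonconvex} (inner SVRG applied to the regularized objective, whose smoothness is $\beta+\kappa$ and strong-convexity is $\sigma+\kappa$) with Theorem~\ref{thm:AccSVRGconv} (outer accelerated proximal-point loop with $\tilde{O}(\sqrt{(\sigma+\kappa)/\sigma})$ iterations).  Intuitively, raising $\kappa$ inflates the number of outer iterations but makes each inner SVRG call cheaper; the natural target is to balance the two additive terms $T_G$ and $((\beta+\kappa)/(\sigma+\kappa))^2 T_g$.

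Concretely, I would set
$$\kappa^\star := \beta\sqrt{T_g/T_G}.$$
Because $T_g\leq T_G$ always, this choice satisfies $\kappa^\star\leq \beta$, hence $\beta+\kappa^\star=\Theta(\beta)$. The theorem's hypothesis $\beta=\Omega(\sqrt{T_G/T_g}\,\sigma)$ rewrites as $\kappa^\star=\Omega(\sigma)$, hence $\sigma+\kappa^\star=\Theta(\kappa^\star)$. Substituting these simplifications gives
$$\sqrt{\tfrac{\kappa^\star}{\sigma}}\left(T_G + \left(\tfrac{\beta}{\kappa^\star}\right)^{\!2} T_g\right) = \sqrt{\tfrac{\beta}{\sigma}}\,\bigl(T_g/T_G\bigr)^{1/4}\cdot\bigl(T_G + (T_G/T_g)\,T_g\bigr) = \Theta\!\left(\sqrt{\tfrac{\beta}{\sigma}}\,T_G^{3/4}T_g^{1/4}\right),$$
which is exactly the asserted bound.

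There is essentially no real obstacle here: the proof is just a one-parameter calculus optimization followed by plugging in asymptotics. The only thing that deserves emphasis is the role of the hypothesis $\beta=\Omega(\sqrt{T_G/T_g}\,\sigma)$: this is precisely the regime in which the unconstrained minimizer $\kappa^\star$ of the running-time expression lies above $\sigma$, so that acceleration actually helps. If instead $\beta$ were smaller than $\sqrt{T_G/T_g}\,\sigma$, then the optimum would be pinned at the boundary $\kappa=0$, and the unaccelerated SVRG bound $T_G+(\beta/\sigma)^2 T_g$ of Theorem~\ref{thm:svrg_nonconvex} would already dominate; the stated theorem deliberately excludes this uninteresting case. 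The assumption that each $\nabla f_i$ costs $O(d)$ is not logically needed for the algebraic bound above, but is recorded to certify that $T_g=O(d)$ in the applications (so that $T_G^{3/4}T_g^{1/4}$ can be instantiated concretely).
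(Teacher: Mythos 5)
Your proposal is correct and follows exactly the route the paper intends: the paper simply states that Theorem \ref{thm:AccSVRGoptimized} follows ``by optimizing the above bound with respect to $\kappa$,'' and your explicit choice $\kappa^\star=\beta\sqrt{T_g/T_G}$, together with the observations that $\beta+\kappa^\star=\Theta(\beta)$ (since $T_g\leq T_G$) and $\sigma+\kappa^\star=\Theta(\kappa^\star)$ (from the hypothesis $\beta=\Omega(\sqrt{T_G/T_g}\,\sigma)$), is precisely the balancing computation that yields $\tilde{O}\bigl(\sqrt{\beta/\sigma}\,T_G^{3/4}T_g^{1/4}\bigr)$. No gaps; your remarks on the role of the hypothesis and of the $O(d)$ gradient-cost assumption are accurate.
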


\end{document}